\newtheorem{thm}{Theorem}[section]
\newtheorem{lem}[thm]{Lemma}
\newtheorem{cor}[thm]{Corollary}
\theoremstyle{definition}
\newtheorem{defn}[thm]{Definition}
\newtheorem{rem}[thm]{Remark}
\newcommand{\setof}[1]{\left\{ {#1}\right\}}
\newcommand{\R}{{\mathbb{R}}}
\newcommand{\cA}{{\mathcal A}}
\newcommand{\cB}{{\mathcal B}}
\newcommand{\cC}{{\mathcal C}}
\newcommand{\cE}{{\mathcal E}}
\newcommand{\cF}{{\mathcal F}}
\newcommand{\cK}{{\mathcal K}}
\newcommand{\cM}{{\mathcal M}}
\newcommand{\cV}{{\mathcal V}}
\newcommand{\cW}{{\mathcal W}}
\newcommand{\cZ}{{\mathcal Z}}
\newcommand{\sP}{{\mathsf P}}
\newcommand{\sMG}{\mathsf{MG}}
\newcommand{\sMD}{{\mathsf{ MD}}}
\newcommand{\Int}{\mathop{\mathrm{int}}\nolimits}
\definecolor{gray85}{gray}{0.85} % 15%
\definecolor{gray8}{gray}{0.8} % 20%
\definecolor{gray7}{gray}{0.7} % 30%
\definecolor{gray6}{gray}{0.6} % 40%
\definecolor{gray5}{gray}{0.5} % 50%
\definecolor{gray4}{gray}{0.4} % 60%
\definecolor{gray35}{gray}{0.35} % 65%
\theoremstyle{definition}
\newcommand{\bbR}{\mathbb{R}} %shorter commands for ``Blackboard Bold" letters
\newcommand{\sFP}{\mathsf{FP}}
\newcommand{\scrL}{\mathscr{L}}
\newcommand\sgn{\mathtt{Sign}}
\newcommand*{\bfrac}[2]{\genfrac{\{ }{ \}}{0pt}{}{#1}{#2}}
\tikzset{circle split part fill/.style  args={#1,#2}{%
 alias=tmp@name, 
  postaction={%
    insert path={
     \pgfextra{% 
     \pgfpointdiff{\pgfpointanchor{\pgf@node@name}{center}}%
                  {\pgfpointanchor{\pgf@node@name}{east}}%            
     \pgfmathsetmacro\insiderad{\pgf@x}
      %\begin{scope}[on background layer]
      %\fill[#1] (\pgf@node@name.base) ([xshift=-\pgflinewidth]\pgf@node@name.east) arc
      %                    (0:180:\insiderad-0.5\pgflinewidth)--cycle;
      %\fill[#2] (\pgf@node@name.base) ([xshift=\pgflinewidth]\pgf@node@name.west)  arc
      %                     (180:360:\insiderad-0.5\pgflinewidth)--cycle;  
      \fill[#1] (\pgf@node@name.base) ([xshift=-\pgflinewidth]\pgf@node@name.east) arc
                          (0:180:\insiderad-\pgflinewidth)--cycle;
      \fill[#2] (\pgf@node@name.base) ([xshift=\pgflinewidth]\pgf@node@name.west)  arc
                           (180:360:\insiderad-\pgflinewidth)--cycle;            %  \end{scope}   
         }}}}}  
\title{Comparison of  two combinatorial models of global network dynamics}
\author{Peter Crawford-Kahrl, Bree Cummins, Tomas Gedeon}
\begin{document}

\maketitle

\begin{abstract}
Modeling the dynamics of biological networks introduces many challenges, among them the lack of first principle models, the size of the networks, and difficulties with parameterization.
Discrete time Boolean networks and related  continuous time switching systems provide a computationally accessible way to translate the structure of the network to predictions about the dynamics. 
Recent work has shown that the parameterized dynamics of switching systems can be captured by a combinatorial object, called a DSGRN database, that consists of a parameter graph characterizing a finite parameter space decomposition, whose  nodes are  assigned  a Morse graph that captures global dynamics for all corresponding parameters. 

We show that for a given network there is a way to associate the same type of object by considering a continuous time ODE system with a continuous right-hand side, which we call an L-system. The main goal of this paper is to compare the two DSGRN databases for the same network. Since the L-systems can be thought of as perturbations (not necessarily small) of the switching systems, our results address the correspondence between global parameterized dynamics of switching systems and their perturbations. 
We show that, at corresponding parameters, there is an order preserving map from the Morse graph of the switching system to that of the L-system that is surjective on the set of attractors and bijective on the set of fixed point attractors. We provide important examples showing why this correspondence cannot be strengthened. 
\end{abstract}

\section{Introduction}
Nonlinear dynamics is notoriously difficult. Since a complete rigorous analysis  of dynamics of nonlinear systems of coupled  differential equations of dimension higher than two is almost impossible, any interrogation of higher dimensional systems of ODE usually  relies on  numerical simulations.

In problems arising in cellular biology, there is a need to model many mutually interacting types of molecules that together control cellular fate.  Incorrectly functioning  genetic and regulatory networks are at the core of cancer, diabetes, and other systemic diseases~\cite{Burkhart:08,Chinnam:11,Manning:12}, The crucial importance of these networks for cell biology and human health makes development of effective methods that can characterize dynamics  supported by a network over all parameters a high priority~\cite{albert:collins:glass,Randrup:04,Ingram:06,Alon,Shamir2008,Ma09,Shah11}.
In the context of cell biology, the problems of nonlinear dynamics are compounded by the lack of first principles that would determine appropriate nonlinearities,
the difficulty in obtaining precise experimental data needed to determine parameters, and the need to analyze the dynamics of  5-10 dimensional systems over 30-50 parameters.  

Recently, we introduced in~\cite{us2,Cummins17} a new approach to this problem that assigns two finite objects to any network with  positive and negative edges.  First is a {\it parameter graph} whose nodes are in 1-1 correspondence with regions in the parameter space, where these regions  form a decomposition of the parameter space.  To each domain of the parameter space, i.e. a node of the parameter graph, there is an associated {\it state transition graph} that characterizes allowable transitions between well-defined states of the phase space. Since state transition graphs can be large, a useful description of the recurrent trajectories  is a \textit{Morse graph}, which is graph of strongly connected path components of the state transition graph. 
The entire structure, where to each node of the parameter graph there is an associated Morse graph that captures recurrent dynamics valid for all parameters in the corresponding parameter region, is called Database of Signatures Generated by Regulatory Networks (DSGRN). 

The advantages of such a description of global dynamics is its finiteness and the resulting computability; yet this description, which inevitably must be coarser than the traditional concepts of dynamical systems theory, allows  searching  the database for parameters that support dynamics like bistability, hysteresis, non-constant recurrent behavior, and the ability to compare the prevalence of such signatures across multiple networks.

The development of DSGRN was guided by work over the last two decades on {\it switching systems} \cite{glass:kaufman:73,Thomas1991,Thomas95,Gouze2002,deJong2004,edwards00} which are ordinary differential equations with piecewise constant nonlinearities. The value of each  nonlinearity changes discontinuously when an argument crosses  a threshold. The collection of these thresholds divides the phase space into domains, which form the nodes of the state transition graph. The choice of piecewise constant nonlinearities presents two sets of challenges. The mathematical challenge is to make sense of the continuation of solutions that enter the intersection of multiple thresholds. The biological challenge is to justify the selection of piecewise constant nonlinearities as appropriate models of biological processes.
Our view is that DSGRN, being by its construction a finite, computable and robust object, gives us information not only about the switching system that was used to construct it, but it also describes all nearby continuous systems~\cite{us1}. 

There has been a considerable interest in the computational biological systems community in trying to enlarge a class of ODE systems for which finite state transition graphs capture the behavior of all solutions~\cite{Batt2005,Belta2006,Batt2007a,Batt2007b}. The result of these papers show how to construct a phase transition graph for so called multi-affine systems, where nonlinearities are piecewise linear functions. 

In this paper we generalize their result to nonlinearities that are step functions with Lipschitz continuous bridges, which we call L-functions. These functions have alternating intervals where the function is constant, and the intervals where function is Lipschitz and bounded between the (constant) values of the function on neighboring intervals. We show that the dynamics of such systems are captured by a state transition graph, and 
consequently one can associate DSGRN results to a network based on interactions mediated by L-functions.
The central question that we address in this paper is relationship between global dynamics of two sets of models, as captured by DSGRN description.
Given a regulatory network, we can associate to it two different  DSGRN databases characterizing global dynamics across all parameters: one based on 
a switching system description, and the other based on  L-functions. 
A natural question is  how these two objects are related.  This is a global version of a question that has been investigated before where a piecewise constant nonlinearity is 
perturbed around a threshold to form a continuous function~\cite{deJong2004,Ironi2011,us1}.

%Since DSGRN can be applied to both a switching system and a system based on L-functions, a natural question is to ask how these two objects are related when they are based off of the same network. 

  We begin by introducing a general framework for the construction of state transition graphs. This framework encompasses the well-known case of Boolean maps~\cite{Thomas1973,Thomas1991,Thomas95,Chaves2006,Steinway:2015vv}, and extends it to a \textit{multi-level discrete map} $D$, which  increases the number of discrete states available to each node in the network~\cite{Thieffry06,Chaouiya06}. We then introduce the idea of a \textit{nearest neighbor multi-valued map} $\cF$, which may arise as an \textit{asynchronous update rule} of $D$ in a way analogous to that  discussed in~\cite{Chaves2006} for a Boolean model. $\cF$ obeys an adjacency condition which allows only one node of the network  to change its  state at a time. 

We then formally introduce S-systems (switching systems) and L-systems (ODE systems based on L-functions), and demonstrate the relationship between maps $D$, $\cF$, and these ODE systems. %These are only two such examples of ODE systems that conform to this framework, however they are important examples that compare a well-studied system, the S-system, to a more realistic generalization, the L-system. 
We show that there is a map between the parameters of the S- and L-systems, and that under this map, there is a well-defined relationship between the Morse graphs of the two systems. In particular, the map from the S-system Morse graph to the L-system Morse graph is bijective on fixed points, surjective on attractors, and order-preserving otherwise. We conclude with a series of examples that demonstrate that these  relationships cannot be strengthened.

\section{General System}\label{sec:general}

\begin{defn}
A \textit{regulatory network} $\textnormal{\textbf{RN}} = (V,E)$ is a graph with network nodes $V = \{1,2,\dots,N\}$ and signed, directed edges  $E \subset V \times V \times \{\rightarrow,\dashv\}$. For $i,j \in V$, we will use the notation $(i,j) \in E$ to denote a directed edge from $i$ to $j$ of either sign, $i \to j$ to denote an \textit{activation} or positive interaction, and $i \dashv j$ to denote a \textit{repression} or negative interaction. 
We only consider regulatory networks with  no negative self-regulation, $i \dashv i$.

We define the \textit{targets} of a node $i$ as
\begin{equation*}
\mathbf{T}(i):=\{j \mid (i,j) \in E \}
\end{equation*}
and the  \textit{sources} of a node $i$ as
\begin{equation*}
\mathbf{S}(i):=\{j \mid (j,i) \in E \}
\end{equation*}
\end{defn}

We exclude networks with negative self-regulation because they present technical difficulties in switching systems; see~\cite{Edwards2015} for an excellent set of references. In many cases,  negative self-regulation can be replaced in a model by an intermediate node~\cite{Edwards2015,huttinga2017global} which obviates the need for a negative self-edge in a regulatory network.

As an example regulatory network \textbf{RN} we consider $\{x \dashv y, y \dashv x\}$. Here each node has as a target the other node.

\begin{figure}[h]
\centering
	\begin{tikzpicture}[main node/.style={circle,fill=white!20,draw,font=\sffamily\normalsize\bfseries}, scale=1]
		\node[main node] (x) at (0,0) {$x$};
		\node[main node] (y) at (1.5,0) {$y$};	
		\path[->,>=angle 90,thick]
		(x) edge[shorten >= 3pt,shorten <= 3pt,-|,bend right] node[] {} (y)
		(y) edge[shorten >= 3pt,shorten <= 3pt,-|,bend right] node[] {} (x)
		;
	\end{tikzpicture}\caption{Two dimensional example \textbf{RN}.}\label{fig:exampleRN}
\end{figure}
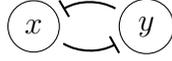

One way to associate dynamics to a  network is to construct a Boolean net~\cite{Thomas1973,Thomas1991,Thomas95,Chaves2006,Steinway:2015vv}. Each node can attain values $0$ or $1$ that are interpreted as low and high levels of activity.  At each node $i$ with $|\mathbf{S}(i)| = n$, there is an associated local Boolean function that assigns to each of the $2^n$ input binary sequences a value of $x_i \in \{0,1\}$.
The collection of the local Boolean functions over the network forms a Boolean function $B: \{0,1\}^N \to \{0,1\}^N$, that acts on a space  of  binary sequences of length $N$. Iterations $B^r$  of this function model long-term behavior of the network. 
The collection of all local Boolean functions that can be selected at each node parameterize the set of all Boolean functions $\cB_\textbf{RN}= \{B\}$  compatible with the given network \textbf{RN}.  Since both the domain $  \{0,1\}^N$ of $B$ and this parameterization of $\cB_\textbf{RN}$ are discrete sets, there is no concept of a ``small'' perturbation from one Boolean function to another.

Motivated by this example, we now propose a different way  to associate to a  network a dynamical system on a finite state  space. These dynamical systems will be parameterized by a continuous parameter space, and so it makes sense to ask how these finite dynamical systems behave under perturbations.
Even though parameter space is continuous, we will show that it  can be divided into a finite number of regions where the behavior of the dynamics is the same, enabling a global description of the network over all real-valued parameters.

We start by  assuming that to each node  of the network there is an associated variable $x_i \in [0, \infty)$, which represents the concentration of chemical species $i$. 
We assume that  there are finite number of thresholds $\theta_{1,i}, \ldots, \theta_{m_i,i}$ %\Bree{(conflict with notation later where $\theta_{m_ii}$ is associated to $(i,m_i) \in E$)} 
that divide the semi-axis $[0, \infty)$ to $m_i+1$ intervals $I_k$. The effect of node $i$ on its target nodes $j \in \mathbf{T}(i)$ will only depend on the interval $I_k$ with $x_i \in I_k$ and not on the particular value $x_i$. The collection of thresholds $\{\theta_{j,i}\}$ partitions  $[0,\infty)^N$ into a finite number of domains $\kappa$, characterized by the property that  the projection on $i$-th variable $\pi_i(\kappa) = I_{k}$ for a unique $k \in \{0,\dots,m_i\}$ for every $i$. We let $x=(x_1,\dots,x_N)$ denote a point in $[0,\infty)^N$.

\vspace{12pt}

\begin{defn} \label{defn:GFMD}
Let $\cV(i):=  \{0,\dots,m_i\}$ and let 
\begin{equation}\label{eq:G}
 G_i: [0,\infty) \to \cV(i)  
 \end{equation}  
be a \textit{state function} defined by  $G_i(x_i) = k$ if and only if $x_i \in I_k$.  Let $\cV=\prod_i \cV(i)$ be the set of all \textit{states} of the network {\textbf{RN}} and let
\[G: [0,\infty)^N \to \cV\] be the vector-valued function with coordinate functions $G_i$. 
 For a given domain $\kappa$, the value $G(x)$ does not depend on $x \in \kappa$. Therefore we  can assign  the  \textit{state} $s:=G(x) \in \cV, x \in \kappa$ to the domain $\kappa$, and we will write $s= g(\kappa)$. Viewed as a map on  the set of domains $\cK = \{\kappa\}$  in $[0,\infty)^N$,  $g$ is a bijection between domains $\kappa$ and states $s \in \cV$
 \begin{equation}\label{eq:multilevel_g}
 g : \cK \mathop{\longrightarrow} \cV.
 \end{equation}
 %where $\cK = \{\kappa\}$ is the set of all domains in $[0,\infty)^N$.
%Given a domain $\kappa$, we assign a \textit{state} $G(\kappa) := G(x) \in \cV$, where $x \in \kappa$ is arbitrary. It follows from the definition of $G$ that the state $G(\kappa) \in \cV$ is well-defined and represents a bijection from domains in $[0,\infty)^N$ to the states $\cV$.
%\end{defn}
%\Tomas{Definition was too long. I put the part below  out of the definition, if we cite some of that in the future, we must put it inside (but a different) definition.}

We postulate that along each edge $(i, j)$ in the network a signal from the node $i$ to the node $j$ can be transmited, and this  signal attains only  finite number of values $A_{j,i} := \{a_{j,i}^1< \ldots <a_{j,i}^t\}$.  This transmission is characterized  via a function 
\begin{equation}\label{eq:F} 
 F_{j,i} : \cV(i) \to A_{j,i} 
\end{equation}
that only depends on the state $k \in \cV(i)$ of  $x_i$.
 Let $A= \prod A_{j,i}$ be a product of all sets $A_{j,i}$ and let 
\[ F: \cV \to A
\]
be the vector-valued function with coordinate functions $F_{j,i}$.

A final piece of our description of discrete dynamics on a network {\textbf{RN}} is  a collection of functions $M_i$, one for each node $i$ of the network, 
\begin{equation}\label{eq:M} 
 M_i: \prod\limits_{j=1}^{|\mathbf{S}(i)|}A_{i,j}\to [0,\infty) \end{equation}
that take the values that are being transmitted along the edges leading into $i$ and produce the values $x_i$. 
%\Bree{$x_i = M_i(a_{ii_1}^{j_1},\dots,a_{ii_s}^{j_s})$, where $|\mathbf{S}(i)|=s$ (yes, this is horrible notation, we need to fix it).}  
Let $M$ be the vector-valued function with coordinate functions $M_i$, 
\[ M : A \to [0,\infty)^N. 
\]
 The composition 
\begin{equation}\label{eq:D} 
D:= G\circ M \circ F: \cV \to \cV 
\end{equation}
is a generalization of the Boolean function $B$, called a \textit{multi-level discrete function}~\cite{Thieffry06,Chaouiya06}.

\end{defn} 

The set of values $A$ in Definition~\ref{defn:GFMD} replaces the binary values $0,1$ that are  transmitted in a Boolean network.
Note that even when two outward edges from node $i$ have the same sign, say $i \to j$ and $i \to l$, different values can be transmitted to $j$ and $l$  since the values $F_{j,i}(k) $ and $ F_{l,i}(k)$ may be different.  This  generalizes the behavior of a traditional Boolean function.

% \Tomas{I would take the next out. Not sure how D partitions}
% \Bree{Notice that we have taken real-valued parameters $\{\theta_{ji},a_{ji}^k\}$ and partitioned them via the discrete map $D$. As we will explore in more detail later, this finite partition of parameter space leads to a global description of dynamics for all possible choices of real-valued parameters.
% }

One serious objection to representing the dynamics of a network by either a Boolean function $B$ or a discrete function $D$ is that it does not respect the continuity of the underlying biological process. In particular, the Boolean vector $(B(s) - s)$ can be non-zero in more than one component which implies that two or more processes switch at exactly the same time.  In addition,  the vector $(D(s)-s)$ can have entries greater than 1 in absolute value, %\Tomas{I want to stick with the original}\Bree{or multiple entries with differing values (which could sum to one)}, 
which clearly violates continuity of the underlying chemical process. 
An  {\it asynchronous update} of Boolean function $B$ has been proposed~\cite{Chaves2006,Chaves09} to generate dynamics that are compatible continuous variables, and we extend this approach to map $D$.
%\Bree{To assign to $B$ dynamics  that are compatible with continuous variables $x_i$, people have introduced an asynchronous update of $B$  {\bf citations}. We extend this notion here to $D$.}
%Since the underlying variables $x_i$ are continuous, we can think about embedding the system into $[0,\infty)^N$. 

%We assign to each $\kappa$ a unique \textit{target point} $p = M \circ F(s)$, and a unique \textit{target state} 
%\[ t = G(p) = D(s). \]
%A natural extension of the discrete map $D$ to a system with states in $[0,\infty)^N$ would be to take all $x \in \kappa$ and map them to a domain $\kappa'$ with 
%$G(\kappa') = t$. That is, all $x \in \kappa$ with state $s$ map to $\kappa'$ with state $t = D(s)$. 
%\Bree{Because $G$ is a bijection between domains and states, we may think of the multi-level discrete function $D$ as mapping a domain $\kappa$ to a domain $\kappa'$ whenever $D(G(\kappa)) = G(\kappa')$.}
%However, $\kappa$ may not be adjacent to $\kappa'$  in the phase space, which presents problems with interpretation in a biological context.
%In Boolean nets, this property is known as synchronous update. Because of the embedding in phase space and the underlying continuity of our system, we would like to implement asynchronous update, or the ability to transition only to neighboring boxes.}

For a given multi-level discrete map $D$ we  define a {\it nearest neighbor multi-valued map} $\cF$, that  will only allow transitions from domains to the adjacent domains in phase space, where these  transitions are induced by $D$.

%Therefore we define a new multi-valued map, called the {\it asynchronous update rule for  $D$} that  will only allow transitions from domains to the adjacent domains in the phase space. 

\vspace{12pt}

\begin{defn}\label{def:asynchronousupdate}
{\rm
	Let $s_1$ and $s_2$ be the states of two domains $\kappa_1$ and $\kappa_2$, $s_1 = g(\kappa_1)$ and $s_2=g(\kappa_2)$ from~\eqref{eq:multilevel_g}. These domains are adjacent along $i$ (and so are the states) if and only if there exists exactly one index $i$ such that  
	\[ \pi_i(\kappa_1) \cap \pi_i(\kappa_2) \subset \{ x_i = \theta_{j,i} \} \mbox{ and } \pi_j(\kappa_1) = \pi_j(\kappa_2) \mbox{ for all } j \not = i.\]
	% \begin{equation}\label{eq:adjacent} |s_{1,i} - s_{2,i}| = 1. \end{equation}
  Let  $t_1 = D(s_1)$.
	The \textit{asynchronous update rule of $D$} is a nearest neighbor multi-valued map $\cF : \cV \rightrightarrows \cV$, such that 
	 $s_2 \in \cF(s_1)$ if and only if 
	 \begin{description}
	 \item[(a)] $s_1 = t_1=s_2$; or 
	 \item[(b)] $t_1 \not = s_1$, $s_1 :=(s_{1,1},s_{1,2},\dots,s_{1,N})$ and $s_2 :=(s_{2,1},s_{2,2},\dots,s_{2,N})$ are adjacent along $i$ %with  $|s_{1,i} - s_{2,i}| = 1$   
	 and
	 %for each $i$ the value of $s_{2i}$ is between values $s_{1i}$ and $t_i$. In other words  
	 either
	\begin{enumerate}
	\item $s_{1,i} < s_{2,i} \leq t_{1,i}$ or
	\item $s_{1,i} > s_{2,i} \geq t_{1,i}$.
	\end{enumerate}
	
	\end{description}
  $\cF$ is sometimes represented as a graph, called a \textit{state transition graph},  $(\cV,\cE)$, where $(s,t) \in \cE$ if and only if $t \in \cF(s)$.
  }
		\end{defn}

We note that nearest neighbor multi-valued maps $\cF$ can be constructed without reference to an underlying multi-level discrete map $D$ by considering an arbitrary multi-valued map 
$\cF : \cV \rightrightarrows \cV$ where the image $\cF(s)$ is adjacent to $s$.

%	
%	\begin{enumerate}
%	\item $s_{1i} < \min\{s_{2i},t_{1i}\}$ or
%	\item $s_{1i} > \max\{s_{2i},t_{1i}\}$.
%	\end{enumerate}
	%These conditions ensure that the transition proceeds in the direction of the target point in each coordinate. This describes an asynchronous update in that only one variable may change state at a time. This condition constrains the number of allowable global multi-level functions. 

%The map $D$ with asynchronous update is compatible with continuous dynamics on variables $x_i$.  

Since the number of states in $\cV$ can be  very large,  the dynamics of iterates of $\cF$ can be captured by  a more  compact representation~\cite{Chaves09,us2}. 

\vspace{12pt}

\begin{defn}\label{def:recurrent} 
A \textit{recurrent component}   of the map  $\cF$  is a \textit{strongly connected path component} of the associated   graph $(\cV,\cE)$. In other words, it is a maximal collection of vertices $\cC \subset \cV$ such that for any $u, v \in \cC$ there exists a non-empty path from $u$ to $v$ with vertices in  $\cC$ and edges in $\cE$.   In the context of dynamical systems we refer to a recurrent component of $\cF$ as a \textit{Morse set} of $\cF$ and denote it by $\cM\subset \cV$. The collection of all recurrent components of $\cF$ is denoted by 
\[
\sMD(\cF) :=\setof{\cM(p)\subset \cV\mid p\in \sP}
\]
and is called a \textit{Morse decomposition} of $\cF$, where $\sP$ is an index set. Recurrent components inherit a well-defined partial order by the reachability relation in the directed graph $(\cV,\cE)$. Specifically, we may write the partial order on the indexing set $\sP$ of $\sMD(\cF)$  by defining
\[
q \leq p\quad \text{if there exists a path in $(\cV,\cE)$ from an element of $\cM (p)$ to an element of $\cM (q)$}.
\]
\end{defn}
%Primarily for clarity we note the following facts regarding recurrent components:
%
%\begin{prop} Two elements $\nu,\nu'\in\cV$ belong to the same recurrent component of $\cF$ if and only if there exist positive integers $k,k'$ such that $\nu'\in \cF^k(\nu)$ and $\nu\in \cF^{k'}(\nu')$. Distinct recurrent components are disjoint. Not every vertex need belong to some recurrent component. Recurrent components are strongly connected components. The only strongly connected components that are not recurrent components are singleton sets that do not have a self-edge.
%\end{prop}

\vspace{12pt}

\begin{defn} \label{defn:morsegraph}   
The \textit{Morse graph} of $\cF$, denoted $\sMG(\cF)$, is the Hasse diagram of the poset $(\sP,\leq)$. We refer to the elements of $\sP$ as the \textit{Morse nodes} of the graph.
\end{defn}

An intriguing question is the characterization of the set of ordinary differential equations models that are compatible with a given map  $\cF$. 
This class of equation will share the same broad dynamical features that are captured by the  Morse graph of $\cF$. In the other direction, identifying a map $\cF$ for a given ODE system would facilitate its analysis, because of the inherent computability of the Morse graph from the map $\cF$.

\vspace{12pt}

\begin{defn}\label{defn:compatible}
We say an ordinary differential equation model with variables $x_i$ is \textit{compatible} with a nearest neighbor multi-valued map $\cF$  if solutions $x(t)$ can traverse  from domain $\kappa_1$ to adjacent domain $\kappa_2$ only if $s_2 \in \cF(s_1)$. 
\end{defn}

	In this manuscript, we offer two examples of  ordinary differential equation models associated to  a regulatory network that are compatible with  a nearest neighbor map  $\cF$, which we call S-systems and L-systems.
	The S-system, also known as a switching system in the literature, has been very well studied, while the ability to define $\cF$ associated to  the L-system is a new contribution. In both cases, the association between the continuous ODE system and a discrete map $\cF$ allows us to combine the best features of both worlds. On one hand, 
	there is a combinatorial representation of both the dynamics and the parameters, which allows computational enumeration of all types of dynamics for all parameters.
	On the other hand,  the underlying assumptions of continuity  allow us to interpret the dynamics of iterates of $\cF$ in terms of  solutions of  systems of ordinary differential equations.

\subsection{S-system} \label{defn:params_S}

Given a regulatory network $\mathbf{RN} = (V,E)$, 
to  each node $i$ we assign a parameter $\gamma_i^S$, which will be interpreted as a rate of degradation of $x_i$.
For each edge $(i,j) \in E$  we associate three numbers: a threshold $\theta_{j,i}$ and a low value $l_{j,i}^S$ and a high value $u_{j,i}^S$, so that 
$A_{j,i} = \{ l_{j,i}^S, u_{j,i}^S\}$ (see Definition~\ref{defn:GFMD}). We require for all $i$ that
\begin{align*} 
& \quad 0 < \gamma_i^S, \quad 0 < l_{j,i}^S < u_{j,i}^S, \quad 0 < \theta_{j,i}, \quad \theta_{j,i} \neq \theta_{k,i} \mbox{ whenever } j \neq k
\end{align*}
and we call an \textit{S-parameter}  of $\mathbf{RN}$ the tuple $z^S=(l^S,u^S,\theta,\gamma^S) \in \bbR^{d^S}$ where $d^S = \#(V) + 3\#(E)$. 
%The collection of parameters $l^S \cup u^S$ corresponds to the set $A$ from Definition~\ref{defn:GFMD} and 
The collection of the threshold parameters $\{\theta_{j,i}\}$  partitions $[0,\infty)^N$, which facilitates the definition of the function $G^S : [0,\infty)^N \to  \cV^S$, 
$\cV^S = \prod_i \cV(i)$,  via its component functions 
\begin{align*} 
G_i^S : [0, \infty)\setminus\{\theta_{j,i}\} \to \cV(i) = \{0,\dots,|\textnormal{\textbf{T}}(i)|\}
\end{align*} 
as in Definition~\ref{defn:GFMD}.
Let $\theta_{j,i}(k)$ be $k$-th threshold in the linearly ordered set $\{ \theta_{\ell,i} : \ell \in \textnormal{\textbf{T}}(i)\} \subset [0,\infty)$. 
Then the function $G_i^S$ is defined by
%\[ G_i^S(x_i) = 0 \mbox{ if } x_i <  \theta_{j,i}(1),  G_i^S(x_i) = s \in \{ 1, \ldots, |\textnormal{\textbf{T}}(i)| \} \mbox{ if }  x_i \in (\theta_{j,i)(s)  \theta_{j,i}(s+1)), G_i^S(x_i) = |\textnormal{\textbf{T}}(i)| \mbox{ if } x_i >  \theta_{j,i}(  |\textnormal{\textbf{T}}(i)|)) .\]
\[ x_i < \theta_{j,i} \Leftrightarrow G_i^S(x_i) \leq k-1; \quad \quad x_i >  \theta_{j,i} \Leftrightarrow G_i^S(x_i) > k-1.\]
%\Bree{\textbf{I had to add the integer range of $G_i$ to make the function discrete-valued and I had to change $k$ to $k-1$ to be consistent with our integer maps.}}

We define  $F_{j,i}^S$ from~\eqref{eq:F} as
\begin{align}
 F_{j,i}^S \,\circ\, G_i^S(x_i)  :=&
\begin{cases}
l_{j,i}^S & \text{if  } G_i^S(x_i) \leq k-1 \text{ and } i\to j,\text{ or } G_i^S(x_i) > k-1 \text{ and } i\dashv j\\
u_{j,i}^S & \text{for } G_i^S(x_i) > k-1 \text{ and } i\to j,\text{ or } G_i^S(x_i) \leq k-1 \text{ and } i\dashv j\\
\text{undefined} & \text{otherwise}
\end{cases}\label{eq:sigmaswitching}
\end{align}
Let $\sigma_{j,i}^S := F_{j,i}^S \,\circ\, G_i^S(x_i)$ and 
where $\sigma_j^S$ is the vector-valued function with components $\sigma_{j,i}^S$.

%\Bree{\textbf{I removed $F_{j,i}(\eta)$ because it wasn't quite right and because defining $\sigma$ and $F$ simultaneously shortens the exposition and makes the whole thing nicer in my opinion. The old definition is still here, just commented out if you don't agree.}}

In addition, to every node $j$  we assign a rule $\bar M_j$ that combines all the values of the input edges to a node $j$  into a single real value $x_j$. This rule is called a \textit{logic} at the node $j$ and it is assumed to be a multi-affine function with all coefficients equal to $1$. Recall that a multi-affine function is  a polynomial with the property that the degree in any of its variables is at most 1. 

The  \textit{S-system for \textbf{RN}} is a system of ordinary differential equations 
\begin{equation}	\label{generalswitchingsystemequation}
\dot{x}_j=-\gamma_j^S x_j + \Lambda_j^S(x) = -\gamma_j^S x_j + \bar M_j \circ \sigma_{j}^S(x) \qquad	j=1,\dots,N
\end{equation}

We will later show the connection between this system and a multi-level discrete function $D^S$ as in~\eqref{eq:D}, and its  asynchronous update rule $\cF^S$ as in Definition~\ref{def:asynchronousupdate}.

Continuing the example \textbf{RN} shown in Figure~\ref{fig:exampleRN}, the corresponding S-system is the system given by
\begin{align*}
\dot x &= -\gamma_x^S + \sigma_{x,y}^S(y)\\
\dot y &= -\gamma_y^S + \sigma_{y,x}^S(x)
\end{align*}
where 
\begin{equation*}
\sigma_{x,y}^S(y) = \begin{cases} u_{x,y}^S & \text{if } y < \theta_{x,y} \\
l_{x,y}^S & \text{if } y > \theta_{x,y}^S
\end{cases}; \qquad \sigma_{y,x}^S(x) = \begin{cases} u_{y,x}^S & \text{if } x < \theta_{y,x} \\
l_{y,x}^S & \text{if } x > \theta_{y,x}
\end{cases}.
\end{equation*}
The function  $\sigma_{y,x}^S(x)$ is depicted on the left of Figure~\ref{fig:exampleSandLsystem}. The other function  $\sigma_{x,y}^S(y)$ will have the same shape, as both edges of the example \textbf{RN} correspond to negative regulation.

\subsection{L-system} \label{defn:params_L}

For the L-system we replace a single  threshold $\theta_{j,i}$ by two thresholds  $\vartheta_{j,i}^-$, and $\vartheta_{j,i}^+$. 
Given a regulatory network $\mathbf{RN} = (V,E)$, to  each node $i$ we again assign a decay parameter $\gamma_i^L$.  For each edge $(i,j) \in E$, we associate four real-valued parameters  $u_{j,i}^L$, $l_{j,i}^L$, $\vartheta_{j,i}^-$, and $\vartheta_{j,i}^+$. Here again $A_{j,i} := \{l_{j,i}^L, u_{j,i}^L\}$  (Definition~\ref{defn:GFMD}).
We require for all $i$ that
\begin{align*} 
& \quad 0 < \gamma_i^L, \quad 0 < l_{j,i}^L < u_{j,i}^L, \quad 0 < \vartheta_{j,i}^- < \vartheta_{j,i}^+, \quad [\vartheta_{j,i}^-,\vartheta_{j,i}^+] \cap [\vartheta_{k,i}^-,\vartheta_{k,i}^+] = \emptyset \mbox{ whenever } j \neq k.
\end{align*}

 The tuple $z^L=(l^L,u^L,\vartheta^-,\vartheta^+,\gamma^L) \in \bbR^{d^L}$ is  an L-parameter of $\mathbf{RN}$, where $d^L = \#(V) + 4\#(E)$.

In an analogy with the S-system, 
we define a  function 
\begin{equation}\label{eq:sigmaL}
\sigma_{j,i}^L(x):= \pi_i(\sigma_j^L(x)) =
\begin{cases}
l_{j,i}^L	&	\text{for } x_i \leq \vartheta_{j,i}^- \text{ and } i\to j, \text{or } x_i\geq\vartheta_{j,i}^+ \text{ and } i\dashv j\\
u_{j,i}^L	&	\text{for}\ x_i \geq \vartheta_{j,i}^+ \text{ and } i\to j, \text{or } x_i\leq\vartheta_{j,i}^- \text{ and } i\dashv j\\
 f_{j,i}^L(x_i)  & \text{for } x_i \in [\vartheta_{j,i}^-,\vartheta_{j,i}^+]
\end{cases}
\end{equation}
where $f_{j,i}^L(x_i)$ is a Lipschitz continuous function with lower bound $l^L_{j,i}$ and upper bound $u^L_{j,i}$. 
%\Tomas{It is continuous already} \Bree{I disagree -- you have to specify gluing at the boundaries} \Bree{that enforces $\sigma_{j,i}^L(x)$ continuous  at the threshold boundaries.} 
The function $\sigma_{j,i}^L(x)$ is a step function that is  regularized by a Lipschitz bridge. This  defines a vector-valued function $\sigma_j^L(x)$ coordinate-wise.

\vspace{12pt}

\begin{defn}\label{def:GforLsys}
At  every L-parameter and for every $i= 1, \ldots, N$, the interval $[0,\infty)$ is decomposed into intervals with non-overlapping interiors
\[ (I_{0,i} := [0,\vartheta^-_{j_1,i}]) \leq (I_{\frac{1}{2},i} := [\vartheta^-_{j_1,i},\vartheta^+_{j_1,i}]) \leq (I_{1,i} := [\vartheta^+_{j_1,i},\vartheta^-_{j_2,i}]) \leq  \cdots \leq (I_{m_i,i} := [\vartheta^-_{j_{m_i},i},\infty)). \]
We define the $i$-th component of the  state function $G^L$ (see~\eqref{eq:G}) by 
\[ G^L_i(x_i) = k \mbox{ when } x_i \in \Int I_{k,i}. \] 

We leave $G^L_i(x_i)$ undefined on finite set of values $x_i= \vartheta^\pm_{n,i}$ where $n = j_1, \ldots, j_{m_i}$.
Let $\cV^L(i) := \{ 0, \frac{1}{2}, 1, \frac{3}{2}, \dots, m_i \}$ be the range of $G^L_i$ and let $\cV^L:= \prod_i \cV^L(i)$ be the set of states associated to the domains $\cK^L$.
Note that $G^L$ induces a bijection $g^L$  between the set of  domains $\cK^L$ and $\cV^L$, as in~\eqref{eq:multilevel_g}.
\end{defn}

% \Tomas{ We assume that for every $i$  the intervals  $[\vartheta_{j,i}^-,\vartheta_{j,i}^+] $  are all distinct for all $j$.}

A system of ordinary differential equations is called an \textit{L-system associated to  \textbf{RN}} if 
\begin{equation}	\label{generalLsystemequation}
\dot{x}_j=-\gamma_j^L x_j + \Lambda_j^L(x) = -\gamma_j^L x_j + \bar M_j \circ \sigma_{j}^L(x) \qquad	j=1,\dots,N,
\end{equation}
where $\bar M_j$ is defined as for an S-system.

We continue our example. For  network shown in Figure~\ref{fig:exampleRN}, the associated L-system is given by 
\begin{align*}
\dot x &= -\gamma_x^S + \sigma_{x,y}^L(y)\\
\dot y &= -\gamma_y^S + \sigma_{y,x}^L(x).
\end{align*}

 In Figure~\ref{fig:exampleSandLsystem} we depict one possible shape of  function $\sigma_{i,j}^L(x_j)$; any  continuous function $f^L_{i,j}(x_j)$ that connects $u^L_{y,x}$ and $l^L_{y,x}$ and is bounded vertically between these values satisfies our constrains on $\sigma_{i,j}^L(x_j)$. 
%In this case, the exact form of $\sigma_{i,j}^L(x_j)$ is not explicitly given, as the exact shape of $f^L_{i,j}(x_j)$ is undetermined, only that it satisfies our contraints. We depict an example $\sigma_{y,x}^L(x)$ on the right of Figure~\ref{fig:exampleSandLsystem}.

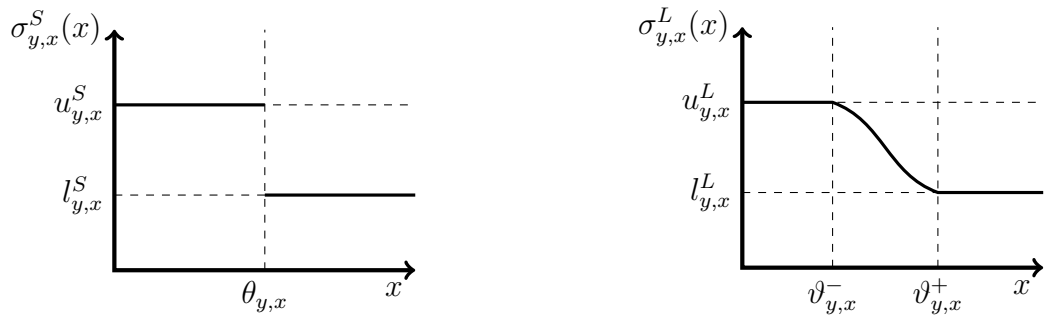
\begin{figure}[h!]
\begin{minipage}{.5\textwidth}
\begin{tikzpicture}[scale=.8]
  %\draw[step=.5cm,gray ,very thin] (0,0) grid (5,4);
  \draw[ultra thick,<->] (0,4) node[anchor=east]{$\sigma_{y,x}^S(x)$} -- (0,0) -- (5,0) node[anchor=north east]{$x$} ;
  \draw[very thick] (0,2.75) -- (2.5,2.75);
  \draw[very thick] (2.5,1.25) -- ++(2.5,0);
  \draw[dashed] (2.5,0) node [below]{$\theta_{y,x}$} -- ++(0,4);
  \draw[dashed] (0,1.25) node[anchor=east]{$l_{y,x}^S$} -- ++(5,0);
  \draw[dashed] (0,2.75) node[anchor=east]{$u_{y,x}^S$}-- ++(5,0);
% \draw[thick,domain=0:5,smooth,variable=\x,blue] plot ({\x},{ 1.5*(\x-.07)^5 / ((\x-.07)^5 + (2.15)^5) + 1.25});
\end{tikzpicture}
\end{minipage}
\begin{minipage}{.5\textwidth}
\begin{tikzpicture}[scale=.8]
  %\draw[step=.5cm,gray ,very thin] (0,0) grid (5,4);
  \draw[ultra thick,<->] (0,4) node[anchor=east]{$\sigma_{y,x}^L(x)$} -- (0,0) -- (5,0) node[anchor=north east]{$x$} ;
  \draw [shorten >= 0pt,shorten <= 0pt,very thick] 
  (0,2.75) to (1.5,2.75) to[out=340, in=160] (3.25,1.25) to (5,1.25);  
  \draw[dashed] (1.5,0) node [below]{$\vartheta_{y,x}^-$} -- ++(0,4);
  \draw[dashed] (3.25,0) node [below]{$\vartheta_{y,x}^+$} -- ++(0,4);
  \draw[dashed] (0,1.25) node[anchor=east]{$l_{y,x}^L$} -- ++(5,0);
  \draw[dashed] (0,2.75) node[anchor=east]{$u_{y,x}^L$}-- ++(5,0);
% \draw[thick,domain=0:5,smooth,variable=\x,blue] plot ({\x},{ 1.5*(\x-.07)^5 / ((\x-.07)^5 + (2.15)^5) + 1.25});
\end{tikzpicture}
\end{minipage}
\caption{(Left) Function $\sigma_{y,x}^S(x)$ from the  S-system associated to \textbf{RN} shown in Figure~\ref{fig:exampleRN}. (Right)  Function  $\sigma_{y,x}^L(x)$ from the associated L-system.} \label{fig:exampleSandLsystem}
\end{figure}

It is important to note that the function $\sigma_{j,i}^L(x)$ cannot be  represented as a composition  $F_{j,i}^L \circ G_i^L$ as in the S-system. This is because the range of $\sigma_{j,i}^L$ is an interval  $[l_{j,i}^L,u_{j,i}^L]$, rather than the discrete set of values $\{l_{j,i}^L,u_{j,i}^L\}$. This means that we cannot construct a multi-level discrete function $D^L$. However, we will construct a nearest neighbor multi-valued function $\cF^L$ such that the solutions of L-system are compatible with $\cF^L$.

Systems of ordinary differential equations similar to the L-system have been studied as continuous perturbations of the S-systems~\cite{Ironi2011,us1}.  In this interpretation, the interval 
$[ \vartheta_{j,i}^-,  \vartheta_{j,i}^+]$ in the definition of $ \sigma_{j,i}^L$   contains the threshold $\theta_{j,i}$ of $ \sigma_{j,i}^S$, and has length $\epsilon$, a small number. With the same values $l_{j,i}^L = 	l_{j,i}^S$, and 	$u_{j,i}^L = u_{j,i}^S$, the function $ \sigma_{j,i}^L$ is a small $C^0$ perturbation of $ \sigma_{j,i}^S$. 
A challenge is to characterize how the dynamics of such a nearby L-system reflect the dynamics of the S-system.  This is a difficult question in the ODE setting, where the emphasis is on individual trajectories.

In this paper we address this question   from a perspective of global dynamics, where we compare the Morse graphs associated to $\cF^L$ and $\cF^S$. Furthermore, we do not require that intervals $[ \vartheta_{j,i}^-,  \vartheta_{j,i}^+]$ are small; this is replaced by the requirement that  $ [\vartheta_{j,i}^-,\vartheta_{j,i}^+] \cap [\vartheta_{k,i}^-,\vartheta_{k,i}^+] = \emptyset$.

\section{Construction of $\cF^S$ and $\cF^L$}\label{sec:domaingraphs}

In this section we will show that both the S- and L-systems generate nearest neighbor multi-valued functions  $\cF^S$ and $\cF^L$. We will  represent these maps as graphs with vertices that correspond to discrete states, and edges that correspond to allowed  transitions.
\vspace{12pt}

It is clear from the definition of the S-system
that  the thresholds $\{\theta_{j,i} : j \in \mathbf{T}(i)\}$ form a strict total order for each $i \in V$. We denote this collection of orderings by $O(z^S)$. Similarly, the 
intervals $\{[\vartheta_{j,i}^-,\vartheta_{j,i}^+] : j \in \mathbf{T}(i)\}$ form a strict total order for each $i \in V$, and we denote the collection by $O(z^L)$.
Note that 
$\Lambda^S_j$ and $\Lambda^L_j$ are multi-affine combinations of  bounded functions, so they are themselves bounded. 

For convenience we introduce the thresholds $\vartheta_{0,i} = \theta_{0,i}=0$ and $\vartheta_{\infty,i} = \theta_{\infty,i} = \infty$ for each $i$. \vspace{12pt}

\begin{defn} \label{def:cell}
Let $\varphi, \varphi'$ be thresholds in either the switching or perturbed systems. We say that $\varphi, \varphi'$ are \textit{adjacent} if $\varphi < \varphi'$ and there does not exist $\varphi''$ such that $\varphi < \varphi'' < \varphi'$.

	Let 
	\[\zeta := \prod^N_{i=1} I_i   \] 
where  $I_i$ is either a non-degenerate interval $I_i=  [\varphi_i ,\varphi'_i]$  with adjacent thresholds $\varphi_i ,\varphi'_i$,  a half-infinite interval $I_i=  [\varphi_i ,\infty)$ where $\varphi_i$ is the largest of the thresholds of $x_i$, or a degenerate interval
	$I_i =[\varphi_i, \varphi_i]$.
Let 
\[ND(\zeta):= \{ i \in \{1,\dots,N\} \mid \text{$I_i$ is a non-degenerate interval}\},\]
 and let $\ell = \#(ND(\zeta))$. Note  that $\zeta$ has dimension $\ell$ in phase space and say that $\zeta$ is an \textit{$\ell$-cell}.
	
If $\zeta$ is an $N$-cell we refer to it as a \textit{domain}.  The collection of all  domains of S-systems  will be denoted by  $\cK^S$ and the same collection  for L-system  will be $\cK^L$.

In order to facilitate comparison between domains of S- and L-systems we define two subsets of $\cK^L$. First, we denote
$\cK_{N}^L \subsetneq \cK^L$ to be the set of domains $\kappa$ such that for every $i\in ND(\kappa)$, the interval $I_i$ is either half-infinite or of the form $I_i = [ \vartheta_{j_{m},i}^+, \vartheta_{j_{m+1},i}^-]$. These are the intervals where the functions $\sigma_{j,i}^L(x_i)$ are constant. Second, we define the subset $\cK_{N-1}^L \subsetneq \cK^L$ such that one and exactly one $\sigma_{j,i}^L(x_i) =f_{ji}^L(x_i)$ is not constant; that is, there is an exactly one  $i \in ND(\kappa)$ such that $I_i = [ \vartheta_{j,i}^-, \vartheta_{j,i}^+]$ for some $j$.
\end{defn}

\vspace{12pt}

\begin{figure}[h!]
\centering
\begin{tikzpicture}[scale=.75]
%	\draw[step=.5cm,cyan ,very thin] (0,0) grid (12,4);
	\draw[ultra thick,<->] (0,4) node[anchor=north east]{$y$} -- (0,0) -- (5,0) node[anchor=north east]{$x$} ;
	\draw[dashed] (2.5,0) node[anchor=north]{$\theta_{y,x}$} -- (2.5,4);
	\draw[dashed] (0,2) node[anchor=east]{$\theta_{x,y}$} -- (5,2);

\draw[ultra thick,<->] (7,4) node[anchor=north east]{$y$} -- (7,0) -- (12,0) node[anchor=north east]{$x$} ;
	\draw[dashed] (8.75,0)  node[anchor=60]{$\vartheta_{y,x}^-$} -- ++ (0,4);
	\draw[dashed] (10.25,0) node[anchor=120]{$\vartheta_{y,x}^+$} -- ++ (0,4);
	
	\draw[dashed] (7,2.5) node[anchor=east]{$\vartheta_{x,y}^+$} -- ++ (5,0);
	\draw[dashed] (7,1.5) node[anchor=east]{$\vartheta_{x,y}^-$} -- ++ (5,0);	
\end{tikzpicture}
 
\caption{The  phase space for the   the S-system (left), and L-system (right) associated to  \textbf{RN} in Figure~\ref{fig:exampleRN}.}
\end{figure}
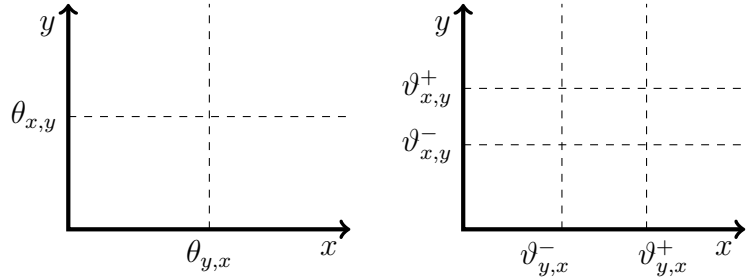

\vspace{12pt}

\begin{defn} \label{faceDefn}
Let $\kappa \in \cK^S$, or $\kappa \in \cK^L$   %be a domain of either an S- or L-system with parameter $z$, 
where $\kappa := \prod^N_{i=1} I_i$ with $I_i = [\varphi_i,\varphi'_i], \varphi_i \not = \varphi'$,  or half-infinite. Considering $I_j$,
we say that 
\[\tau_j^- := \prod^{j-1}_{i=1} I_i	\times	\{\varphi_j\}	\times	\prod^{N}_{i=j+1} I_i \]
is a \textit{left face} of $\kappa$ with projection index $j$.
Similarly, if $\varphi'_j \not = \infty$, we say that  
\[\tau_j^+ := \prod^{j-1}_{i=1} I_i	\times	\{\varphi'_j\}	\times	\prod^{N}_{i=j+1} I_i \] 
is a \textit{right face} of $\kappa$ with projection index $j$.

A \textit{wall} is a pair $(\tau,\kappa)$, where $\kappa$ is a domain and $\tau$ is a face of $\kappa$. Each wall inherits the projection index from the corresponding face $\tau$ of $\kappa$. We say the \textit{sign of the wall $(\tau,\kappa)$} is 1 (and write $\text{sgn}(\tau,\kappa) = 1$) if $\tau$ is a left face of $\kappa$ and we say the sign of the wall $(\tau,\kappa)$ is $-1$ (and write $\text{sgn}(\tau,\kappa) = -1$) if $\tau$ is a right face of $\kappa$. We denote the collection of walls by $\cW(z)$.
\end{defn}

The goal of this section is to define maps $\cF^S$ and $\cF^L$ on a discrete set of states.
The states will be associated to  domains $\cK^S$ and $\cK^L$. For each pair of domains $\kappa, \kappa'$ such that 
$\tau = \kappa \cap \kappa'$ is a face with some projection index $j$, we will associate a direction, either $+1, -1$, or both, to each pair of walls $(\tau,\kappa)$ and $(\tau,\kappa')$  by making use of the sign of these walls. We then finish by associating an oriented edge (or edges) to the pair of states  that correspond to $\kappa$ and $\kappa'$.

Observe that by definition of functions $\Lambda(x)$ for either S- or L-system (\eqref{generalswitchingsystemequation} and~\eqref{generalLsystemequation}), 
the value of $\Lambda(x)$ is constant on the interior of $\kappa$, $\Int(\kappa)$, where $\kappa \in \cK^S$ for the S-system or $\kappa \in \cK^L_N \subsetneq \cK^L$ for the L-system.
We emphasize  that this is not true for domains $\kappa \in  \cK^L \setminus \cK^L_N $ for L-systems, since there is  at least one variable $x_i \in [\vartheta_{j,i}^-,\vartheta_{j,i}^+]$.

With a slight abuse of the notation we will call this value $\Lambda(\kappa)$.
This value 
\[ \Lambda(\kappa) = ( \Lambda_1(\kappa), \dots, \Lambda_N(\kappa)) \]
has a nice interpretation in terms of solutions of the S-system: all solutions starting at $x \in \mbox{int}(\kappa)$  converge toward the  point 
\[ (\Gamma^S)^{-1}\Lambda^S(\kappa) = ( \Lambda_1^S(\kappa)/\gamma^S_1, \dots, \Lambda_N^S(\kappa)/\gamma^S_N),\]
where $\Gamma^S$ is a diagonal matrix consisting of $\gamma^S_i$.

\vspace{12pt}

\begin{defn}\label{def:focalpt}
	The \textit{focal point} or \textit{target point} of a domain $\kappa \in \cK^S$ at a valid S-parameter  or $\kappa \in \cK^L_N$ at a valid  L-parameter is the value $\Gamma^{-1}\Lambda(\kappa)$, where $\Gamma = \Gamma^S$, or $\Gamma = \Gamma^L$, respectively. To simplify notation  will use notation $\Gamma$ for both as the superscript will be clear from the context.  %We will sometimes refer to the focal point of $x_i$ as the projection $\Lambda_i(\kappa)/\gamma_i$. 
	If $\Gamma^{-1}\Lambda(\kappa) \in \kappa'$ we call $\kappa'$ a  \textit{target domain}.  We say that $\kappa$ is an \textit{attracting domain} if $\Gamma^{-1}\Lambda(\kappa) \in \kappa$.
\end{defn}

\vspace{12pt}

  \begin{defn}
A \textit{regular S-parameter} $z^S$ satisfies for all $i = 1,\dots, N$, $j \in \mathbf{T}(i)$, and $\kappa \in \cK^S$, 
\[ \Lambda^S_i(\kappa)/\gamma_i^S \neq \theta_{j,i}.\]
We call the space of all regular S-parameters $Z^S$.
A \textit{regular  L-parameter} $z^L$ satisfies for all $i = 1,\dots, N$, $j \in \mathbf{T}(i)$, and $\kappa \in \cK^L_N$, 
\[ \Lambda_i^L(\kappa)/\gamma_i^L \neq \vartheta_{j,i}^\pm .\]
We call the space of all regular L-parameters $Z^L$.
\end{defn}

\subsection{Nearest neighbor multi-valued map $\cF^S$ for the S-system}

We first define the map $D^S$ which mimics the action of the target point function $\Gamma^{-1}\Lambda: \R^N \to \R^N$ on  the level of states $\cV$. Let 
$D^S: \cV^S \to \cV^S$ by 
\begin{equation}\label{eq:D^S} 
D^S \circ G^S = G^S \circ \Gamma^{-1}\Lambda^S,
\end{equation}
where recall that $G^S$ is defined in Section~\ref{defn:params_S}.
We now define the map $D^S$ more explicitly.
Since  $\Gamma^{-1}\Lambda = \Gamma^{-1} \bar M \circ \sigma$, let 
\[  M := \Gamma^{-1} \bar M .\]
Then since 
\[ G^S \circ \Gamma^{-1}\Lambda^S = G^S \circ \Gamma^{-1} \bar M \circ \sigma^S = G^S \circ M \circ F^S \circ G^S\]
we have 
\[ D^S:= G^S \circ M \circ F^S,\]
analogously to~\eqref{eq:D}.
We now proceed to construct a nearest neighbor multi-valued map $\cF^S$ from the focal points of the S-system~\eqref{generalswitchingsystemequation}, then we show that $\cF^S$ is compatible with the S-system in the sense of Definition~\ref{defn:compatible}, and, finally, we show that $\cF^S$ is an asynchronous update rule for $D^S$.

\vspace{12pt}

\begin{defn}\label{def:Swall_labeling}
Let $z^S$ be a regular parameter for an S-system and  $\cK^S$ the corresponding set of domains. The \textit{wall-labeling} of $\cW(z^S)$ is a function $\scrL^S:\cW(z^S) \to \{-1,1\}$ defined as follows. Let $(\tau,\kappa) \in \cW(z^S)$ be a wall with projection index $j$; i.e. $\tau \subset \{x_i = \theta_{j,i}\}$. Then define 
\[\scrL^S((\tau,\kappa)):= \text{sgn}(\tau,\kappa) \cdot \text{sgn}(\Lambda_i^S(\kappa)/\gamma_i^S - \theta_{j,i}).\]
A wall $(\tau,\kappa)$ is an \textit{absorbing wall} if $\scrL^S((\tau,\kappa)) = -1$ and an \textit{entrance wall} if $\scrL^S((\tau,\kappa)) = 1$.
\end{defn}

Notice that a regular parameter enforces $\Lambda_i(\kappa)/\gamma_i - \theta_{ji} \neq 0$, so that $\scrL^S((\tau,\kappa))$ is defined for all walls.

\vspace{12pt}

\begin{defn}\label{def:Sdomaingraph}

% Let $G_S : \cK^S \to V^S$ be a bijection between a domain $\kappa \in \cK^S$ and its state $v \in V^S$, \Tomas{as defined compoin~\eqref}
Let $z^S$ be a regular parameter for an S-system, with $\cK^S$ the corresponding set of domains, $\cV^S$ the set of states, and the state function  $G^S$, 
 which  induces  the bijection  $g^S: \cK^S \to\cV^S$ described in Definition~\ref{defn:GFMD}.

 We define a multi-valued function $\cF^S: \cV^S \rightrightarrows \cV^S$ induced by the wall-labeling $\scrL^S$ as follows. Let $g^S(\kappa_1) = s_1$ and $g^S(\kappa_2) = s_2$. Then $s_2 \in \cF^S(s_1)$  if and only if one of the following holds:
		\begin{enumerate}
			\item[(a)]$s_1 = s_2$ and $\kappa$ is an attracting domain.
			\item[(b)] There exists some face $\tau$ such that $(\tau,\kappa_1)$ and $(\tau,\kappa_2)$ are walls and $\scrL^S ((\tau,\kappa_1)) = -1$ (indicating an absorbing wall of $\kappa_1$) and  $\scrL^S ((\tau,\kappa_2)) = 1$ (indicating an entrance wall of $\kappa_2$).
		\end{enumerate}
   As in Definition~\ref{def:asynchronousupdate} we may  represent $\cF$ as a graph $(\cV^S,\cE^S)$, where $(s_1,s_2) \in \cE^S$ if and only if $s_2 \in \cF^S(s_1)$. We call this graph a 
\textit{state transition graph} of the S-system.
\end{defn}

\vspace{12pt}

\begin{thm}\label{thm:FcompatiblewithS}
$\cF^S$ is compatible with the S-system.
\end{thm}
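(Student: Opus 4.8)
The plan is to unwind Definition~\ref{defn:compatible}: I must show that whenever a solution $x(t)$ of the S-system~\eqref{generalswitchingsystemequation} passes from the interior of a domain $\kappa_1$ into the interior of an adjacent domain $\kappa_2 \neq \kappa_1$, the states $s_1 = g^S(\kappa_1)$ and $s_2 = g^S(\kappa_2)$ satisfy $s_2 \in \cF^S(s_1)$. The structural fact that drives everything is that $\Lambda^S$ is constant on $\Int(\kappa_1)$, so there the system decouples into $N$ scalar linear ODEs $\dot x_i = -\gamma_i^S x_i + \Lambda_i^S(\kappa_1)$, each solution of which is monotone and converges to the focal-point coordinate $\Lambda_i^S(\kappa_1)/\gamma_i^S$; the same holds in $\kappa_2$ with $\Lambda_i^S(\kappa_2)$. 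Since only case (b) of Definition~\ref{def:Sdomaingraph} concerns distinct domains, the goal reduces to producing the common wall $\tau = \kappa_1 \cap \kappa_2$ and checking its two labels against Definition~\ref{def:Swall_labeling}.

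First I would use adjacency (Definition~\ref{def:asynchronousupdate} together with Definition~\ref{faceDefn}) to fix the shared face $\tau$, perpendicular to a single coordinate $i$ at a threshold $\theta_{j,i}$, with all other coordinate-intervals of $\kappa_1$ and $\kappa_2$ equal; without loss of generality $\kappa_1 \subset \{x_i < \theta_{j,i}\}$ and $\kappa_2 \subset \{x_i > \theta_{j,i}\}$, so $\tau$ is the right face of $\kappa_1$ and the left face of $\kappa_2$, giving $\text{sgn}(\tau,\kappa_1) = -1$ and $\text{sgn}(\tau,\kappa_2) = +1$. Let $t_0$ be the crossing time, so $x_i(t_0) = \theta_{j,i}$ while $x(t) \in \Int(\kappa_1)$ for $t$ slightly below $t_0$ and $x(t) \in \Int(\kappa_2)$ for $t$ slightly above. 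Because $x_i$ increases through $\theta_{j,i}$, the left derivative $\dot x_i(t_0^-) = -\gamma_i^S\theta_{j,i} + \Lambda_i^S(\kappa_1) \geq 0$; regularity of $z^S$ forbids equality, so $\Lambda_i^S(\kappa_1)/\gamma_i^S > \theta_{j,i}$, whence $\text{sgn}(\Lambda_i^S(\kappa_1)/\gamma_i^S - \theta_{j,i}) = +1$ and $\scrL^S((\tau,\kappa_1)) = (-1)(+1) = -1$, an absorbing wall. Symmetrically, since the trajectory lies in $\{x_i > \theta_{j,i}\}$ just after $t_0$, the right derivative $\dot x_i(t_0^+) = -\gamma_i^S\theta_{j,i} + \Lambda_i^S(\kappa_2) \geq 0$, and regularity gives $\Lambda_i^S(\kappa_2)/\gamma_i^S > \theta_{j,i}$, so $\scrL^S((\tau,\kappa_2)) = (+1)(+1) = +1$, an entrance wall. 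These are exactly the hypotheses of Definition~\ref{def:Sdomaingraph}(b), yielding $s_2 \in \cF^S(s_1)$; the opposite crossing orientation is identical after swapping signs.

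The hard part will be the bookkeeping at the discontinuity of the vector field on $\tau$, where $\Lambda^S$ is not defined. The crucial claim is that merely entering $\Int(\kappa_2)$ forces the $\kappa_2$-focal point to lie on the far side of the threshold: if instead $\Lambda_i^S(\kappa_2)/\gamma_i^S < \theta_{j,i}$, the $\kappa_2$-flow at $x_i = \theta_{j,i}$ would point back across $\tau$, so $x_i$ would decrease immediately after $t_0$ and the solution could not occupy $\Int(\kappa_2)$. The clean way to see this is the one-sided derivative sign argument above, with regularity upgrading ``$\geq 0$'' to ``$>0$'' and thereby excluding grazing tangency in the $i$-th direction. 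I would also note that ``traverse to an adjacent domain'' means the crossing point lies in the relative interior of $\tau$, so that no other coordinate sits at a threshold at time $t_0$ and all $x_k$, $k \neq i$, remain strictly inside their common intervals by continuity; simultaneous threshold crossings correspond to higher-codimension faces and fall outside the adjacency hypothesis. No global existence or uniqueness theory is needed, since transversality in $x_i$ (guaranteed by regularity) means the solution crosses $\tau$ cleanly by concatenating the explicit exponential solutions in $\kappa_1$ and $\kappa_2$.
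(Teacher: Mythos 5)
Your proposal is correct and follows essentially the same route as the paper: identify the shared face, use the one-sided sign of $\dot x_i$ at the crossing (with regularity upgrading the weak inequality to a strict one) to place both focal points on the far side of $\theta_{j,i}$, and read off the wall labels to invoke Definition~\ref{def:Sdomaingraph}(b). Your omission of the self-loop case is harmless, since adjacency forces $\kappa_1 \neq \kappa_2$ and Definition~\ref{defn:compatible} only constrains traversals between distinct domains; the paper includes that case anyway, but the substance of the argument is identical.
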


\begin{proof}
The key observation is that all solutions in $\Int(\kappa)$ converge toward the target point $\Gamma^{-1}\Lambda(\kappa)$, while they lie within $\kappa$. 
If there is no trajectory leaving $\kappa$ for an adjacent domain, then the trajectory $x$ must remain within $\kappa$ for all time. Hence the focal point $\Gamma^{-1}\Lambda(\kappa)$ is in $\kappa$, and $\kappa$ is an attracting domain. By Definition~\ref{def:Sdomaingraph} (a), $s \in \cF^S(s)$, where $s=g^S(\kappa)$.

Now assume that there exists a trajectory $x(t) = (x_1(t), \ldots, x_N(t)) $ that passes from $\kappa_1$ to an adjacent $\kappa_2$ via an intervening face $\tau$, where $x_i = \theta_{j,i}$ on $\tau$. First consider the case in which $\tau$ is a right face of $\kappa_1$ and a left face of $\kappa_2$, so that $\text{sgn}(\tau,\kappa_1) = -1$ and $\text{sgn}(\tau,\kappa_2)=1$. 
 Let $x_i(0)  = \theta_{j,i}$. There is an interval $I:= (-\epsilon, \epsilon) \subset \R$ such that $\dot{x}_i(t) >0 $ for all $t \in I\setminus \{0\}$ and $x(t) \in \kappa_1$ for $t \in (-\epsilon,0)$, $x(t) \in \kappa_2$ for $t \in (0,\epsilon)$. Let $t_k = \Lambda^S_i(\kappa_k)/\gamma^S_i$ for $k=1,2$ denote the target points of the domains. Then $\dot x_i(t)  >0$   implies $t_1, t_2 > \theta_{j,i}$, which implies
\[\scrL^S((\tau,\kappa_1))= -1 \cdot 1 \mbox{ and } \scrL^S((\tau,\kappa_2))= 1 \cdot 1 . \]
Thus $s_2 \in \cF^S(s_1)$, where $s_k := g^S(\kappa_k)$, by Definition~\ref{def:Sdomaingraph} (b).

The case in which $\text{sgn}(\tau,\kappa_1) = 1$ and $\text{sgn}(\tau,\kappa_2)=-1$ with $\dot{x}_i(t) <0$ on $I$ is similar.
\end{proof}

The proof of the following theorem we postpone to Appendix~\ref{app:proofSasync} due to length.

\vspace{12pt}

\begin{thm}\label{thm:Sasync}
$\cF^S$ is an asynchronous update rule for $D^S$.
\end{thm}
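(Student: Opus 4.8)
The plan is to show that the two descriptions of the relation $s_2 \in \cF^S(s_1)$---the one in Definition~\ref{def:Sdomaingraph} via the wall-labeling $\scrL^S$, and the asynchronous update rule of $D^S$ from Definition~\ref{def:asynchronousupdate}---coincide edge for edge. Writing $s_1 = g^S(\kappa_1)$, $s_2 = g^S(\kappa_2)$, and $t_1 := D^S(s_1)$, I first record from~\eqref{eq:D^S} that $t_1 = G^S(\Gamma^{-1}\Lambda^S(\kappa_1))$, so the $i$-th coordinate of the target state is $t_{1,i} = G_i^S\!\left(\Lambda_i^S(\kappa_1)/\gamma_i^S\right)$, i.e. the $G_i^S$-level of the $i$-th coordinate of the focal point. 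I would then match clauses (a) and (b) of the two definitions separately.

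For clause (a), "$\kappa_1$ attracting" means $\Gamma^{-1}\Lambda^S(\kappa_1) \in \kappa_1$, which by the bijection $g^S$ is exactly $D^S(s_1) = s_1$, i.e. $t_1 = s_1$; together with $s_2 = s_1$ this is precisely clause (a) of Definition~\ref{def:asynchronousupdate}. I would also observe that if $\kappa_1$ is attracting then no face of $\kappa_1$ can be absorbing, since by regularity the focal point lies strictly between the bounding thresholds in every coordinate; hence an attracting $\kappa_1$ has no clause-(b) out-edges, matching the requirement $t_1 \neq s_1$ in clause (b) of Definition~\ref{def:asynchronousupdate}.

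For clause (b), adjacency of $\kappa_1,\kappa_2$ across a common face $\tau$ with coordinate direction $i$ is literally the statement that $s_1,s_2$ are adjacent along $i$, and since a single threshold is crossed, $s_{2,i} = s_{1,i}\pm 1$. Taking, without loss of generality, $\kappa_1$ below and $\kappa_2$ above $\tau \subset \{x_i = \theta_{j,i}\}$ (so $s_{2,i} = s_{1,i}+1$, $\text{sgn}(\tau,\kappa_1) = -1$, $\text{sgn}(\tau,\kappa_2) = 1$), the wall $(\tau,\kappa_1)$ is absorbing iff $\Lambda_i^S(\kappa_1)/\gamma_i^S > \theta_{j,i}$; since $\theta_{j,i}$ separates level $s_{1,i}$ from $s_{1,i}+1$ and $G_i^S$ is increasing, this is equivalent to $t_{1,i} \geq s_{1,i}+1 = s_{2,i}$, i.e. to $s_{1,i} < s_{2,i} \leq t_{1,i}$, and it forces $t_1 \neq s_1$. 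The mirror subcase ($\kappa_1$ above $\kappa_2$) gives the inequality $s_{1,i} > s_{2,i} \geq t_{1,i}$. Thus the absorbing condition on $\kappa_1$ is exactly the coordinate inequality of the asynchronous rule.

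The main obstacle is the remaining discrepancy: Definition~\ref{def:Sdomaingraph}(b) also requires $(\tau,\kappa_2)$ to be an entrance wall, a condition on the focal point of $\kappa_2$ that the asynchronous rule never mentions. I would close this gap by proving that an absorbing wall $(\tau,\kappa_1)$ forces $(\tau,\kappa_2)$ to be an entrance wall. The key is how $\Lambda_i^S$ depends on $x_i$: since $\Lambda_i^S = \bar M_i \circ \sigma_i^S$ depends only on the inputs $\sigma_{i,k}^S(x_k)$ with $k \in \mathbf{S}(i)$, crossing $\tau$ (which changes only the level of $x_i$) leaves $\Lambda_i^S$ unchanged unless $i \in \mathbf{S}(i)$ and $\theta_{j,i}$ is the self-threshold $\theta_{i,i}$. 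When $\Lambda_i^S(\kappa_1) = \Lambda_i^S(\kappa_2)$, entrance of $\kappa_2$ (that $\Lambda_i^S(\kappa_2)/\gamma_i^S > \theta_{j,i}$) is identical to absorption of $\kappa_1$. In the remaining self-loop case, the exclusion of negative self-regulation forces the loop to be an activation, so monotonicity of $\bar M_i$ in that argument gives $\Lambda_i^S(\kappa_2) > \Lambda_i^S(\kappa_1)$, whence $\Lambda_i^S(\kappa_1)/\gamma_i^S > \theta_{i,i}$ again yields $\Lambda_i^S(\kappa_2)/\gamma_i^S > \theta_{i,i}$, i.e. entrance of $\kappa_2$. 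Throughout, regularity rules out the degenerate equalities $\Lambda_i^S(\kappa)/\gamma_i^S = \theta_{j,i}$ so that all signs and $G_i^S$-levels are unambiguous. Assembling the two clauses establishes that $s_2 \in \cF^S(s_1)$ under Definition~\ref{def:Sdomaingraph} iff it holds under the asynchronous update rule of $D^S$.
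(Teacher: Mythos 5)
Your proposal is correct and follows essentially the same route as the paper's proof in Appendix~A: match clause (a) via the equivalence between attracting domains and $t_1=s_1$, match clause (b) by translating the absorbing-wall condition on $\kappa_1$ into the coordinate inequality $s_{1,i}<s_{2,i}\leq t_{1,i}$, and then use the absence of negative self-regulation to show the entrance-wall condition on $\kappa_2$ comes for free. The only (minor) difference is in that last step, where the paper argues by contradiction from the sign of the vector field on either side of $\tau$, whereas you argue directly that $\Lambda_i^S$ is unchanged across the face unless there is a (necessarily positive) self-edge, in which case monotonicity of $\bar M_i$ gives $\Lambda_i^S(\kappa_2)\geq\Lambda_i^S(\kappa_1)$ --- an equivalent and if anything slightly more explicit treatment.
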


\subsection{Nearest neighbor multi-valued map $\cF^L$ for the L-system}

Definition~\ref{def:focalpt} defines focal points for all domains $\kappa \in \cK^S$, but only  for domains $\kappa \in \cK^L_N \subsetneq \cK^L$ in the L-system. 
Therefore, we cannot construct a multi-level map $D^L$ analogous to $D^S$ for the S-system, and moreover the wall labeling that we used to construct $\cF^S$ cannot be used in the same way for $\kappa \in \cK^L$ to define $\cF^L$. %exactly the same technique to define $D^L$ and $\cF^L$ as we did for $D^S$ and $\cF^S$.} %We now show how we can define $D_L$ for all domains in $\cK^L$.
% We now proceed to define the wall-labeling function for all other domains  in $\cK^L$ for the  L-system, {which will help us define multi-valued function  $\cF^L$.
But it turns out that all the information needed to assign directions to all walls in the L-system can be inferred from  just the domains in $\cK^L_N$.  We will use this fact to construct an nearest neighbor multi-valued map $\cF^L$ and associated state transition graph $(\cV^L,\cE^L)$, without going through the map $D^L$ as an intermediary. 
%\Bree{Although we cannot construct $D^L$ first, we could construct a (non-unique) multi-level discrete function $D^L$ from $\cF^L$ post hoc. We choose not to do that here since we are interested in compatibility with an ODE system, which is defined only for asynchronous update rules.} %While we only need to define the wall label function for walls, which are $N-1$-cells, we proceed with a more general construction that allows assignment of $N-k$ labels to each k-cell.

\vspace{12pt}

\begin{defn} \label{cornerpointDefn}
Recall from Definition~\ref{def:cell} that an $\ell$-cell %of an L-system with regular parameter $z^L$ 
has the form $\zeta = \prod^N_{i=1} [\varphi_i ,\varphi'_i]$. %possibly some half-infinite intervals. 
We define the set of \textit{corner points} of $\zeta$, denoted $\cC(\zeta)$, by
\begin{equation*}
	\cC(\zeta) := \prod^N_{i=1, \varphi'_i \not = \infty} \{\varphi_i ,\varphi'_i\},
\end{equation*}
Note that we exclude $\varphi'_i = \infty$ from the definition of corner points.
% is ill-defined, so we disregard such thresholds in  construction. 
\end{defn}

\vspace{12pt}

\begin{defn} \label{def:sgn}
Let $\cC(\zeta)$ be the set of corner points for an $\ell$-cell $\zeta$ for an L-system with regular parameter $z^L$, and let $P, P' \in \cC(\zeta)$ be corner points with $P=(\varphi_1,\varphi_2,\dots,\varphi_N)$ and $P'=(\varphi_1',\varphi_2',\dots,\varphi_N')$. 
We introduce the function $\sgn(\cC(\zeta),k)$ defined as 
\begin{equation}\label{eq:sgn}
\sgn(\cC(\zeta),k)=
\begin{cases}
+1	&	\text{if } \forall P\in \cC(\zeta), \Lambda_k(P)/\gamma_k - \varphi_k>0 \\
-1	&	\text{if } \forall P\in \cC(\zeta), \Lambda_k(P)/\gamma_k - \varphi_k<0 \\
0	&	\text{if } \exists P,P'\in \cC(\zeta) \text{ such that }  \Lambda_k(P)/\gamma_k -\varphi_k>0 \text{ and } \Lambda_k(P') /\gamma_k - \varphi'_k<0\\
%\text{undefined} & \text{otherwise } (\forall P, \dot x_k(P)=0) \\
\end{cases}
\end{equation}

\end{defn}

Notice that a regular parameter enforces $\Lambda_k^L(P)/\gamma_k -\varphi_k \neq 0$, so that~\eqref{eq:sgn} covers all possible cases. Note also  that since the intervals $ [\vartheta_{k,i}^-, \vartheta_{k,i}^+]$ are disjoint for all $k$, the $i$-th component of every corner point of $\zeta$ is on the boundary of  the set 
$\R \setminus \bigcup_{k} [\vartheta_{k,i}^-, \vartheta_{k,i}^+].$ 
Therefore every corner point of $\zeta$ is also  the corner point of some $\kappa \in \cK^L_N$.

\vspace{12pt}

\begin{defn} \label{def:walllabelingfunction}
Let $z^L\in Z^L$ be a regular parameter. The \textit{wall-labeling} of $\cW(z^L)$ is a function $\scrL^L:\cW(z^L) \to \{-1,0,1\}$ defined as follows. Let $(\tau,\kappa) \in \cW(z^L)$ have a projection index $i$. Then define 
\[\scrL^L((\tau,\kappa)):= \text{sgn}(\tau,\kappa) \cdot \sgn(\cC(\tau), i)\]
A wall $(\tau,\kappa)$ is an \textit{absorbing wall} if $\scrL^L((\tau,\kappa)) = -1$, an \textit{entrance wall} if $\scrL^L((\tau,\kappa)) = 1$, and a \textit{bidirectional wall} if $\scrL^L((\tau,\kappa)) = 0$.
\end{defn}

%\section{Domain Graph Construction}\label{sec:domaingraphs}

%\vspace{12pt}
%
%\Bree{I would prefer to move this definition to the L-system section to make it symmetric with the S-system section.} \Tomas{ I am not quite sure which section that is} \Bree{Section~\ref{defn:params_L}}
%
%
%

\vspace{12pt}

 \begin{defn}\label{def:Ldomaingraph}
Let $z^L$ be a regular parameter for an L-system, $\cK^L$ be the corresponding  set of domains, $\cV^L$ be the state space and $g^L: \cK^L \to \cV^L$ be the bijection  between the set of domains $\cK^L$ and the states  $\cV^L$.

The nearest neighbor multi-valued map $\cF^L: \cV^L \rightrightarrows \cV^L$  induced by the wall-labeling $\scrL^L$ is defined as follows.  Let $g^L(\kappa_1) = s_1$ and $g^L(\kappa_2) = s_2$. Then $s_2 \in \cF^L(s_2)$  if and only if one of the following holds:
		\begin{enumerate}
			\item[a)]$s_1 = s_2$ and $\kappa \in \cK^L_N$ is an attracting $N$-domain. 
			\item[b)] There exists some face $\tau$ such that $(\tau,\kappa_1)$ and $(\tau,\kappa_2)$ are walls and $\scrL^L ((\tau,\kappa_1)) = -1$ (indicating an absorbing wall of $\kappa_1$) and  $\scrL^L ((\tau,\kappa_2)) = 1$ (indicating an entrance wall of $\kappa_2$).
			\item[c)] There exists some face $\tau$ such that $(\tau,\kappa_1)$ and $(\tau,\kappa_2)$ are walls and $\scrL^L ((\tau,\kappa_1)) = \scrL^L ((\tau,\kappa_2)) = 0$ (indicating a bidirectional wall of both $\kappa_1$ and $\kappa_2$).
		\end{enumerate}
\end{defn}

\vspace{12pt}

\begin{rem}
  We want to make two notes  about this definition.
  \begin{itemize}
  \item First, since attracting domains are only defined for $\kappa \in \cK^L_N \subsetneq \cK^L$ we can only have self-edges on $N$-domains. We justify this choice later by showing that there is always an escape path out of any $\kappa \in \cK^L\setminus \cK^L_N$, see Lemma~\ref{lem:elltoellplus1}.
  \item Second, notice that if there exist two domains $\kappa_1$, $\kappa_2$ in an L-system phase space sharing a face $\tau$, then $\scrL^L ((\tau,\kappa_1)) =0$ if and only if $\sgn(\cC(\tau),i)=0$, which happens  if and only if $\scrL^L ((\tau,\kappa_2)) = 0$. Therefore there is never  a case where $\scrL^L ((\tau,\kappa_1)) = \pm 1$ and $\scrL^L((\tau,\kappa_2)) =0$,  which shows that the assignment of arrows is  always well defined.
  \end{itemize}
  \end{rem}

  \vspace{12pt}

	\begin{thm}\label{thm:Lcompatible} 
	The L-system is compatible with the nearest neighbor multivalued map $\cF^L$. 
	\end{thm}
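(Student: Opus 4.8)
The plan is to follow the template of Theorem~\ref{thm:FcompatiblewithS}, isolating the one place where the L-system genuinely differs. Suppose a solution $x(t)$ of the L-system passes from a domain $\kappa_1 \in \cK^L$ to an adjacent domain $\kappa_2 \in \cK^L$ through their common face $\tau$ at time $t=0$, where $\tau$ has projection index $i$ and $x_i(0)=\varphi$ is the threshold pinning the face. Since along the face $\dot x_i(0) = -\gamma_i^L\varphi + \Lambda_i^L(x(0)) = \gamma_i^L\big(\Lambda_i^L(x(0))/\gamma_i^L - \varphi\big)$ and $\gamma_i^L>0$, the direction in which $x_i$ crosses $\varphi$ is controlled entirely by the sign of $\Lambda_i^L(x(0))/\gamma_i^L - \varphi$. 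In the S-system this sign was constant on $\kappa_1$ because $\Lambda^S$ was constant there; the new feature is that $\Lambda_i^L$ need not be constant on $\tau$, since some source variable $x_k$ with $k\in\mathbf{S}(i)$ may lie in a bridge interval. Thus the whole argument reduces to understanding how $\Lambda_i^L(x)/\gamma_i^L-\varphi$ behaves as $x$ ranges over $\tau$, and tying that behavior to the combinatorial quantity $\sgn(\cC(\tau),i)$ of Definition~\ref{def:sgn}.

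The heart of the proof, and the step I expect to be the main obstacle, is the corner-point bound: for every $x\in\tau$,
\[
\min_{P\in\cC(\tau)}\Lambda_i^L(P)\ \le\ \Lambda_i^L(x)\ \le\ \max_{P\in\cC(\tau)}\Lambda_i^L(P).
\]
The difficulty is that each bridge $f_{j,i}^L$ is only Lipschitz, not monotone, so $\Lambda_i^L\restriction_\tau$ is \emph{not} a multi-affine function of $x$ and need not take its extrema at the corners of $\tau$. The route around this is to pass to the transmitted values: writing $\Lambda_i^L(x)=\bar M_i\big((\sigma_{i,k}^L(x_k))_{k\in\mathbf{S}(i)}\big)$, each coordinate $y_k:=\sigma_{i,k}^L(x_k)$ lies in $[l_{i,k}^L,u_{i,k}^L]$, being constant unless $I_k$ is exactly the bridge interval $[\vartheta_{i,k}^-,\vartheta_{i,k}^+]$ (here one uses disjointness of the threshold intervals, and the fact that $x_i$ is pinned to a threshold on $\tau$, so any self-input is constant as well). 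Hence $x\mapsto(y_k)$ carries $\tau$ into the box $B=\prod_k J_k$, and the corner points $\cC(\tau)$ map precisely onto the vertices of $B$, since each bridge endpoint realizes $l_{i,k}^L$ or $u_{i,k}^L$. Because $\bar M_i$ is multi-affine it is monotone in each argument, so it attains its extrema over $B$ at the vertices of $B$; combining this with the inclusion of the image of $\tau$ in $B$ yields the displayed bound.

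With the bound established the conclusion follows as in the switching case, split by the orientation of $\tau$. If $\tau$ is a right face of $\kappa_1$ and a left face of $\kappa_2$, so $\text{sgn}(\tau,\kappa_1)=-1$ and $\text{sgn}(\tau,\kappa_2)=+1$, then the crossing increases $x_i$, giving $\Lambda_i^L(x(0))/\gamma_i^L-\varphi\ge 0$ at the point $x(0)\in\tau$; by the corner-point bound this excludes $\sgn(\cC(\tau),i)=-1$ (which would force $\Lambda_i^L/\gamma_i^L-\varphi<0$ throughout $\tau$), leaving $\sgn(\cC(\tau),i)\in\{+1,0\}$. If it is $+1$, then $\scrL^L((\tau,\kappa_1))=-1$ and $\scrL^L((\tau,\kappa_2))=+1$, so Definition~\ref{def:Ldomaingraph}(b) gives $s_2\in\cF^L(s_1)$; if it is $0$, both walls are bidirectional and Definition~\ref{def:Ldomaingraph}(c) gives $s_2\in\cF^L(s_1)$. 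The reversed orientation is symmetric, excluding $\sgn(\cC(\tau),i)=+1$ and again landing in case (b) or (c). Finally, for a solution that never leaves a domain $\kappa$, I would argue exactly as in Theorem~\ref{thm:FcompatiblewithS}: when $\kappa\in\cK_N^L$ the field $\Lambda^L$ is constant and confinement forces the focal point into $\kappa$, so $\kappa$ is attracting and Definition~\ref{def:Ldomaingraph}(a) supplies the self-edge. Since Definition~\ref{defn:compatible} constrains only transitions to adjacent (distinct) domains, this completes the verification, while the complementary statement that domains in $\cK^L\setminus\cK_N^L$ always admit an outgoing edge is precisely Lemma~\ref{lem:elltoellplus1}, which I may cite to confirm consistency of the absence of self-loops there.
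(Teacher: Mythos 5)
Your proposal is correct, and its outer shell is exactly the paper's: split on the orientation of the shared face $\tau$, observe that a crossing with $x_i$ increasing rules out $\sgn(\cC(\tau),i)=-1$, and then read off case (b) or (c) of Definition~\ref{def:Ldomaingraph} from the wall labels, with the self-edge case for attracting $N$-domains handled verbatim as in Theorem~\ref{thm:FcompatiblewithS}. Where you genuinely diverge is in how you justify the key fact that the corner values control the sign of $\dot x_i$ on all of $\tau$. The paper isolates this as Theorem~\ref{cornerpointproof}, proved by induction on cell dimension via Theorem~\ref{straightlineproof}, which propagates the sign from a pair of opposite $(\ell-1)$-faces through the cell one coordinate direction at a time, using that $H_j(h(s))$ is linear in $\sigma_{j,k}(h(s))$ along such a segment. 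You instead push $\tau$ forward under $x\mapsto(\sigma_{i,k}^L(x_k))_{k\in\mathbf{S}(i)}$ into the box $B=\prod_k J_k$ of transmitted values, check that $\cC(\tau)$ hits all vertices of $B$ (using disjointness of the bridge intervals so that only the single coordinate in a bridge contributes a nondegenerate $J_k$), and invoke vertex-extremality of the multi-affine $\bar M_i$ on a box. The two arguments rest on the same affineness-in-each-$\sigma$ observation, but your packaging collapses the paper's induction into one step and makes transparent exactly why the nonmonotone Lipschitz bridges cause no harm: only their range, not their shape, enters. The trade-off is that the paper's Theorem~\ref{cornerpointproof} is stated for arbitrary $\ell$-cells and degenerate indices $j\in ND^c(\zeta)$ and is reused elsewhere (e.g.\ in Lemma~\ref{lem:elltoellplus1}), so in the paper's economy the inductive lemma earns its keep; for the compatibility theorem alone, your direct route suffices and is arguably cleaner.
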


  \begin{proof}
    If there is no trajectory leaving $\kappa \in \cK^L_N$ for an adjacent domain, then the trajectory $x$ must remain within $\kappa$ for all time. Hence the focal point $\Gamma^{-1}\Lambda^L(\kappa)$ is in $\kappa$, and $\kappa$ is an attracting domain. By Definition~\ref{def:Ldomaingraph} (a), $s \in \cF^S(s)$, where $s=g^S(\kappa)$.

Now assume that there exists a trajectory that passes from $\kappa_1$ to an adjacent $\kappa_2$ via an intervening face $\tau$. First consider the case in which $\tau$ is a right face of $\kappa_1$ and a left face of $\kappa_2$, so that $\text{sgn}(\tau,\kappa_1) = -1$ and $\text{sgn}(\tau,\kappa_2)=1$. Then 
$\dot x_i >0$ on $\tau$ and thus
%we know that there exists a finite time period such that within an open neighborhood of the intersection point of the trajectory with $\tau$, we have $\dot x_i >0$ in the perpendicular coordinate $i$. In particular, $\dot x_i >0$ at a point on $\tau$. Then 
\[ \sgn(\cC(\tau),i) \in \{0,+1\} \]
since $\sgn(\cC(\tau),i) = -1$ implies $\dot x_i <0$ everywhere on $\tau$ by Theorem~\ref{cornerpointproof}. 
%, which is proved independently of all theorems in the body of the paper.
 If $\sgn(\cC(\tau),i) = +1$, then 
\[  \scrL^L((\tau,\kappa_1)) = -1 \cdot 1; \qquad  \scrL^L((\tau,\kappa_1)) = 1 \cdot 1, \]
and $s_2 \in \cF^S(s_1)$, where $s_k := g^S(\kappa_k)$, by Definition~\ref{def:Ldomaingraph} (b). Likewise if $\sgn(\cC(\tau),i) = 0$, then 
\[  \scrL^L((\tau,\kappa_1)) = \scrL^L((\tau,\kappa_1)) = 0, \]
and $s_2 \in \cF^S(s_1)$ by Definition~\ref{def:Ldomaingraph} (c).

The case when $\text{sgn}(\tau,\kappa_1) = 1$ and $\text{sgn}(\tau,\kappa_2)=-1$ is similar.
\end{proof}

\section{Relating $\cF^S$ and $\cF^L$}\label{sec:paramgraph}

We want to compare dynamics of the multi-valued maps $\cF^S$ and $\cF^L$ that are associated to the same network $\textnormal{\textbf{RN}} = (V,E)$. 
Dynamics in our interpretation is characterized by the Morse graph and so our questions will be about the relationship between Morse graphs. Multi-valued maps are also parameterized by parameters $z^S \in Z^S, z^L \in Z^L$ and therefore this comparison must be performed between related parameters. Our goal is to define a canonical map between from the set of regular parameters $Z^S$ to the set of regular parameters $Z^L$. %In order to do that we first \Bree{define equivalence classes on regular parameters.}

We make the following key observations
\begin{itemize}
\item For both S- and L-systems,  the order of the thresholds determines an order of activation (or deactivation)  of  the targets of node $i$ as $x_i$ increases;
\item  the images of the maps  $\cF^S$ and $\cF^L$ depend only on the target \textit{domains}, rather than target points in these domains. Based on these facts,
we define an equivalence relation on the set of regular parameters $z^S,v^S \in Z^S$  and an analogous  equivalence relationship on the set of regular parameters $z^L, v^L \in Z^L$.
\end{itemize}

\vspace{12pt}

\begin{defn}\label{def:orders}
Let \textbf{RN} be a regulatory network.
Let $O =\{O_i\}$ be a collection of orders of the target nodes of node $i$:
\[ O_i = \{ j_1 < j_2 < \cdots < j_{m_i} \;|\; j_k \in \mathbf{T}(i) \}.\]
Let $z$ be a regular parameter of either an S- or L-system. We say that $O$ is the \textit{threshold order} of $z$, and denote it $O(z)$, if $\theta_{j_k,i} < \theta_{j_l,i}$ in $z^S$ or
$[\vartheta^-_{j_k,i},\vartheta^+_{j_k,i}] < [\vartheta^-_{j_l,i},\vartheta^+_{j_l,i}]$ in $z^L$ if and only if $j_k < j_l$ in $O_i$.
\end{defn}

Recall that $\cV^S(i) = \{0,1,2,\dots,m_i\}$ and $\cV^L(i) = \{0,\frac{1}{2}, 1, \frac{3}{2},2,\dots,m_i\}$. %A consequence of the definitions of $g^S \,:\, \cK^S \to \cV^S$ in Definition~\ref{def:Sdomaingraph} and $g^L \,:\, \cK^L \to \cV^L$ in Definition~\ref{def:GforLsys} is 
Note that 
\[  \Psi: \cV^S \hookrightarrow \cV^L \mbox{ by } \Psi(v) = v,\]
  which is a bijection onto its image 
  \[ \cV^{SL} := \Psi(\cV^S).\]
  Since  $\cV^{SL} = g^L(\cK^L_N)$ is a bijection and by the definition of the L-system the target points are defined for $\kappa \in \cK^L_N$, 
  %  \[  \cV^S = \{ g^S(\kappa) \,|\, \kappa \in \cK^S\} = \{ g^L(\kappa) \,|\, \kappa \in \cK^L_N\} \subsetneq  \cV^L.\]
   we define a multi-level discrete map $D^L_N$ for the L-system by
  \[ D^L_N \, : \, \cV^{SL} \to \cV^L \mbox{ by }  D^L_N := (g^L)^{-1} \circ  \Gamma^{-1}\Lambda^L(\kappa) \circ g^L \]
  which captures the location of the focal point for each $\kappa \in \cK^L_N$. Note that this multi-level map $D^L_N$ over the subset $\cV^{SL}$ does not uniquely determine a map $D^L$ over $\cV^L$.
 % \begin{equation}\label{eq:D^L_N}  D^L_N \circ g^L(\kappa) = g^L(\kappa') \mbox{ if and only if } \Gamma^{-1}\Lambda^L(\kappa) \in \kappa'. \end{equation}
  %$D^L_N$ is incomplete in the sense that the domain $\cV^S$ is too small for the L-system; focal points for $\kappa \in \cK^L \setminus \cK^L_N$ do not exist.

\vspace{12pt}

\begin{defn}\label{def:equivalence}
We say that two regular parameters $z^S, v^S \in Z^S$ are are equivalent, 
\[ z^S \stackrel{S}{\sim} v^S \]
if the following hold:
\begin{enumerate}
		\item[(1)]  $O(z^S) = O(v^S)$ and
    \item[(2)] $D^S(z^S) = D^S(v^S)$, where $D^S(z^S)$ and $D^S(v^S)$ are the multi-level discrete maps induced by $z^S$ and $v^S$ respectively.
		% \item for all $i \in V$, $\Lambda^{z^S}_i(\kappa)/\gamma_i^{z^S} \in \kappa$ if and only if 
		% $\Lambda^{v^S}_i(\kappa)/\gamma_i^{v^S} \in \kappa$. %$\Lambda^p_i(\beta(\kappa))/\gamma_i^p \in \beta(\kappa')$.
	\end{enumerate}

We denote the equivalence classes of $\stackrel{S}{\sim}$ by $\cZ^S$, and say $\omega^S \in \cZ^S$. 
  We shall use the notation $O(\omega^S)$ to denote the constant threshold order for all $z^S \in \omega^S$, and use $D^S(\omega^S)$ to denote the fixed multi-level discrete map valid for all parameters in this equivalence class.

The equivalence relationship $\stackrel{L}{\sim}$ between  $z^L, v^L \in \cZ^L$ is defined analogously using the map $D^L_N$. Notation   $\omega^L \in \cZ^L$, $O(\omega^L)$, and $D^L_N(\omega^L)$ is also analogous.
\end{defn}

\vspace{12pt}

\begin{rem} \label{ind}
 Note that the maps $\cF^S$ and $\cF^L$ only depend on the equivalence class $\omega^S$ and $\omega^L$, respectively, and not on individual parameters 
$z^S \in \omega^S$, or $z^L \in \omega^L$. This is because the wall-labeling functions $\scrL^S$ and $\scrL^L$ depend only the location of target points $\Gamma^{-1}\Lambda(\kappa)$ with respect to thresholds. Therefore the Morse graph $\sMG$, which we view as a summary of dynamics of $\cF$,  is also only a function of the equivalence class $\omega$. 
With a slight abuse of notation we will denote by $\cF^S(\omega^S)$ the map $\cF^S$ that corresponds to any parameter 
$z^S \in \omega^S$. Similar notation will be used for the map $\cF^L$. 
\end{rem}

As an aside, note that the elements of $\cZ^S$
correspond to the nodes of a {\it combinatorial parameter graph}, introduced in~\cite{us2}. The edges in this graph correspond to a single change in the order of indices in $O_i$ for a single $i$, or to a change in the $i$-th component from one target state to an adjacent target state for a single $i$.

\vspace{12pt}

\begin{defn}\label{def:canonicalperturbation}
	The \textit{canonical map} $\Omega: \cZ^S \hookrightarrow \cZ^L$ maps $\omega^S\in \cZ^S $ to $\omega^L\in \cZ^L$ if and only if 	
	\begin{enumerate}
		\item $O(\omega^S) = O(\omega^L)$ and
		\item $D^S(\omega^S) = D^L_N(\omega^L)$.  %$\Gamma^{-1}\Lambda^S_i(\kappa(\omega^S))/\gamma_i^S \in \kappa$ if and only if $\Lambda^{\omega^L}_i(\beta(\kappa))/\gamma_i^{\omega^L} \in \beta(\kappa)$.
	\end{enumerate}
\end{defn}
It is easy to see that for a fixed network \textbf{RN} the map $\Omega$ is injective.

At this point we are ready to precisely formulate a central question of this paper. For a fixed network \textbf{RN} we have introduced two different classes of multi-valued maps $\cF^S(\omega^S)$ and $\cF^L(\omega^L)$ motivated by S- and L-systems of differential equations. These are both valid choices of a model that captures the dynamics of the network, and they both describe a  family of dynamical models that depend in a continuous way on a high dimensional set of parameters.  In Remark~\ref{ind} we have shown that the number of dynamical behaviors, as captured by the Morse graphs, is finite, as it only depend on the equivalence class $\omega \in \cZ$, and $\cZ$ is a finite set.
Finally, in~(\ref{def:canonicalperturbation}) we have shown that there is a bijection $\Omega$ between $\cZ^S$ and a subset of $\cZ^L$.  A  natural question is, what is the relationship between the dynamics of $\cF^S(\omega)$ and $\cF^L(\Omega(\omega))$?
More precisely, what is the correspondence between the Morse graphs of $\cF^S(\omega)$ and $\cF^L(\Omega(\omega))$? 
The rest of the paper is devoted to this question.

\section{Morse graphs  of  $\cF^S(\omega)$ and $\cF^L(\Omega(\omega))$}\label{sec:Morsegraphs}

The results in this section depend on the absence of negative self-regulation, which we assumed in the definition of a regulatory network at the beginning of this work. In the Appendix, we prove two important, but technical, lemmas that are the basis of the theorems in this section.
\begin{itemize}
  \item Lemma~\ref{lem:equivpath} states that an edge $v \to v'$ in the state transition graph of the S-system exists if and only if the path $\Psi(v) \to u \to \Psi(v')$ exists in the L-system state transition graph, where $u = g^L(\eta)$ with  $\eta \in \cK^L_{N-1}$ is uniquely defined. This  implies that every path that exists in  $\cV^S$ can be lifted to a path  in $\cV^L$ by  adding intermediate nodes $u$ that correspond to domains $\eta := (g^L)^{-1}(u)$ that belong to $ \cK^L_{N-1}$. This leads immediately to Corollary~\ref{cor:2npath}, which extends the result to a path of any length in the graph $(\cV^S,\cE^S)$ generated by $\cF^S$. In other words, paths in $(\cV^S,\cE^S)$ are equivalent to a select set of paths in $(\cV^L,\cE^L)$.
  \item Lemma~\ref{lem:elltoellplus1} proves that if $w \in \cV^L$ has $n>0$ non-integer values, then there exists $w' \in \cV^L$ such that $(w,w') \in \cE^L$ and $w'$ has $n-1$ non-integer values. Therefore for any state $w \in \cV^L$, there is a path from $w$ to $w''$ with  $(g^L)^{-1}(w'') \in \cK^L_N$. In other words, every node in $\cV^L$ has a path to a node in $\cV^{SL}$. This is our justification for defining self-edges in the L-system state transition graph based only on states corresponding to $\kappa \in \cK^L_N$. \end{itemize}

We now proceed to the main theorems of the section that describe the characteristics of a map between the Morse graphs of $\cF^S(\omega)$ and $\cF^L(\Omega(\omega))$.

\vspace{12pt}

  \begin{defn} 
  A map $f : X \to Y$ between ordered spaces $(X, \leq)$ and $(Y, \leq)$  is \textit{order preserving} if $x_1 < x_2$ implies $f(x_1) \leq f(x_2)$.
  \end{defn}

  \vspace{12pt}

  \begin{defn}
  Referring back to Definition~\ref{defn:morsegraph}, let $U,V \in \sMD$ be two sets in a Morse decomposition. We define the order $U \preceq V$ only if there exists a path from an element $v \in V$ to an element $u \in U$ in the associated state transition graph of $\cF$. The inequality is strict if there is no return path. 
\end{defn}

\vspace{12pt}

\begin{thm}\label{thm:ordpreserv}

Consider  $\omega^S \in \cZ^S$ and   the nearest neighbor multi-valued maps $\cF^S(\omega^S)$ and $\cF^L(\Omega(\omega^S))$. Consider the associated Morse graphs $\sMD^S$ and $\sMD^L$.
	Then there is  order-preserving map
	\[ \phi: \mathsf{MD}^S \to \mathsf{MD}^L,\]
  defined by $ C':=\phi(C)$ where $C'$ is the smallest (under inclusion)  element  of $\sMD^L$ containing the states 
  $\Psi(C) := \{ \Psi(v) \:|\; v \in C\}$. 
	% induced by the map $\Psi$.
\end{thm}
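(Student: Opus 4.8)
The plan is to show three things about the map $\phi$: that it is well-defined, that $\Psi(C)$ is always contained in a single Morse set of $\cF^L$ (so that the "smallest element containing $\Psi(C)$" actually exists and is unique), and finally that $\phi$ respects the partial orders on $\sMD^S$ and $\sMD^L$. The two technical lemmas from the Appendix, Lemma~\ref{lem:equivpath} (every S-edge $v\to v'$ lifts to an L-path $\Psi(v)\to u\to\Psi(v')$) and its extension Corollary~\ref{cor:2npath} (every S-path lifts to an L-path through inserted $\cK^L_{N-1}$-nodes), are the workhorses; I would lean on them throughout.

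First I would establish that $\Psi(C)$ lies in a single recurrent component of $\cF^L$. Take any two states $v, v' \in C$. Since $C$ is a strongly connected path component of $(\cV^S,\cE^S)$, there are paths $v \rightsquigarrow v'$ and $v' \rightsquigarrow v$ inside $C$. By Corollary~\ref{cor:2npath}, each such S-path lifts to an L-path between $\Psi(v)$ and $\Psi(v')$ in $(\cV^L,\cE^L)$. Hence $\Psi(v)$ and $\Psi(v')$ are mutually reachable in the L-graph, so they lie in the same strongly connected path component, i.e.\ the same Morse set of $\cF^L$. This shows $\Psi(C)$ is contained in a unique Morse set, and therefore the smallest Morse-decomposition element of $\sMD^L$ containing $\Psi(C)$ is well-defined (it is exactly that single Morse set, since Morse sets are the strongly connected components and any set containing $\Psi(C)$ in $\sMD^L$ must contain this component). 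This pins down $\phi(C) =: C'$ unambiguously.

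Next I would verify order preservation. Suppose $C_1 < C_2$ in the order on $\sMD^S$, meaning (Definition of $\preceq$) there is a path in $(\cV^S,\cE^S)$ from some element of $C_2$ to some element of $C_1$. Lift this path via Corollary~\ref{cor:2npath} to an L-path from $\Psi(C_2) \subset C_2'$ to $\Psi(C_1) \subset C_1'$. This exhibits a path in $(\cV^L,\cE^L)$ from an element of $C_2' = \phi(C_2)$ to an element of $C_1' = \phi(C_1)$, which is exactly the statement that $\phi(C_1) \preceq \phi(C_2)$ in $\sMD^L$. Since order preservation only requires the non-strict conclusion $f(x_1)\leq f(x_2)$ from the strict hypothesis $x_1<x_2$, nothing more is needed here; I would not attempt to track strictness.

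The main obstacle I anticipate is the careful bookkeeping in the claim that the \emph{smallest element containing $\Psi(C)$} coincides with the single Morse set established above, rather than being some larger union. The subtlety is that a priori $\phi$ is defined by an inclusion-minimality condition over all of $\sMD^L$, and I must confirm this minimal element is genuinely a Morse set and that it contains $\Psi(C)$ precisely because $\Psi(C)$ sits inside one strongly connected component. I would resolve this by arguing that Morse sets partition the recurrent vertices and that $\Psi(C)$, being contained in one such component by the first step, forces the minimal containing element to be that component itself. A secondary point requiring care is that the lifted L-paths produced by the lemmas may pass through intermediate nodes $u = g^L(\eta)$ with $\eta \in \cK^L_{N-1}$ that are \emph{not} in the image $\cV^{SL}$ of $\Psi$; I must make sure these intermediate nodes do not break the reachability argument, which they do not, since reachability in the L-graph is exactly what Corollary~\ref{cor:2npath} guarantees regardless of whether intermediate nodes lie in $\cV^{SL}$.
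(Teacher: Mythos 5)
Your proposal is correct and follows essentially the same route as the paper: lift S-paths to L-paths via Corollary~\ref{cor:2npath} to show $\Psi(C)$ is strongly connected and hence sits inside a single Morse set of $\cF^L$, then lift the order-defining path to get $\phi(C_1)\preceq\phi(C_2)$. Your extra care about why the inclusion-minimal element containing $\Psi(C)$ is exactly that single Morse set is a welcome clarification but not a departure from the paper's argument.
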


\begin{proof}
Since $U,V \in \sMD^S$, they are strongly connected components of the graph induced by $\cF^S$, there a path between any pair of nodes in $U$ and any pair of nodes in $V$. 
%Since $U,V \subset \cV^S \subset \cV^L$, they are also sets of nodes in the graph of $\cF^L$.
 By Corollary~\ref{cor:2npath} the paths between the nodes in $U$ and the nodes of $V$ lift to paths  in the graph of $\cF^L$. Therefore the sets $\Psi(U)$ and $\Psi(V)$ are also strongly connected. Therefore  $\Psi(U)$ and $\Psi(V)$ must  be subsets of Morse sets in $\sMD^L$, say $U',V' \in \sMD^L$, respectively.  We  define $U' := \phi(U)$ and $V' := \phi(V)$.  
 
 Finally, if $U  \preceq V$ in  $ \mathsf{MD}^S$, there must be a path  from an element $v \in V$ to an element $u \in U$. By  Corollary~\ref{cor:2npath} there is a path from $\Psi(v)$  to $\Psi(u)$ in the graph of $\cF^L$ and therefore  $U' \preceq V'$ in $\sMD^L$.  This finishes the proof.
\end{proof}

Notice that in the proof of Theorem~\ref{thm:ordpreserv}, we cannot conclude that if $U \prec V$ then  $U' \prec V'$, only that $U' \preceq V'$. As we will see later, it may be that  $U \prec V$ but $V' = U'$. Theorem~\ref{thm:ordpreserv} shows that the general ordering of Morse sets remains similar between the two systems. However, the property of order preservation is not very strong and we will examine ways that we can strengthen this result, as well as reasons why this fails in general case.

\vspace{12pt}

\begin{defn}  An attractor in a Morse graph $\mathsf{MG}$ is a minimal element in the partial order, represented by its Morse set $A \in \sMD$.
The collection of all attractors of $\mathsf{MD}^*$ of the map $\cF^*(\omega)$ at $\omega \in \cZ^*$ will be denoted by $\mathcal{A}^*$
for both $*=S,L$.  We will call all Morse sets that are not attractors  \textit{unstable Morse sets.}
\end{defn}

Note that the minimality of the attractor in the partial order implies that  
\begin{itemize}
  \item Every forward path starting in an attractor remains in the attractor.
  \item Every state has a forward path to an attractor.
\end{itemize}

\vspace{12pt}

  \begin{thm}\label{thm:surjection}
  Consider a network \textbf{RN} with associated multi-valued maps $\cF^S(\omega^S)$ and $\cF^L(\Omega(\omega^S))$, and associated %\textit{with  no negative self-regulation}.  
  Morse decompositions  $\mathsf{MD}^S$ and  $\mathsf{MD}^L$.
  Then the order preserving map $\phi: \mathsf{MD}^S \to \mathsf{MD}^L$ restricts to a surjection over the sets of attractors
  \[  \phi: \mathcal{A}^S \stackrel{onto}\longrightarrow \mathcal{A}^L. \]
  \end{thm}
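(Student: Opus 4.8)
The plan is to prove the statement in two parts: surjectivity of $\phi$ onto $\mathcal{A}^L$, which carries the substantive content, and well-definedness of the restriction, namely that $\phi$ sends every S-attractor to an L-attractor. For surjectivity, fix $A^L \in \mathcal{A}^L$ and construct a preimage. First I would observe that $A^L$ must contain a state of $\cV^{SL}$: choosing any $w \in A^L$ with $n>0$ non-integer coordinates, Lemma~\ref{lem:elltoellplus1} supplies $w' \in \cF^L(w)$ with $n-1$ non-integer coordinates, and since $A^L$ is a minimal element every forward path from it stays inside it, so $w' \in A^L$; iterating yields $\Psi(v) \in A^L$ for some $v \in \cV^S$. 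In the S-system every state flows to an attractor, so there is a forward path $v = v_0 \to \cdots \to v_m = a$ with $a$ in some $A^S \in \mathcal{A}^S$. By Corollary~\ref{cor:2npath} this S-path lifts to a forward L-path from $\Psi(v)$ to $\Psi(a)$, and as $\Psi(v) \in A^L$ with $A^L$ forward invariant, $\Psi(a) \in A^L$. Finally $\Psi(a) \in \Psi(A^S) \subseteq \phi(A^S)$ by the definition of $\phi$ in Theorem~\ref{thm:ordpreserv}, so $\Psi(a)$ lies in the two Morse sets $\phi(A^S)$ and $A^L$; since distinct Morse sets are disjoint, $\phi(A^S) = A^L$, giving the desired preimage.

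For well-definedness I would argue by contradiction, with the crux being the following collapse statement: if $a \in \cV^S$ and there is a forward path in $(\cV^L,\cE^L)$ from $\Psi(a)$ to $\Psi(b)$ with $b \in \cV^S$, then there is a forward path from $a$ to $b$ in $(\cV^S,\cE^S)$. Granting this, suppose $A^S \in \mathcal{A}^S$ but $C' := \phi(A^S)$ is not minimal in $\sMD^L$; then $C'$ has a forward path to a strictly smaller L-attractor $A^L_0 \prec C'$, which by the first part contains some $\Psi(b)$. Concatenating a path inside the strongly connected set $C'$ from $\Psi(a)$, for a fixed $a \in A^S$, to the escape point with the path to $\Psi(b)$ produces a forward L-path $\Psi(a) \to \Psi(b)$; the collapse statement then yields a forward S-path $a \to b$, and forward invariance of the attractor $A^S$ forces $b \in A^S$, hence $\Psi(b) \in C'$. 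This contradicts $\Psi(b) \in A^L_0$ together with the disjointness of the distinct Morse sets $C'$ and $A^L_0$, so $C'$ is an attractor.

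I expect the collapse statement to be the main obstacle. Its single-step and alternating-path versions are exactly Lemma~\ref{lem:equivpath} and Corollary~\ref{cor:2npath}, but a general forward L-path between two states of $\cV^{SL}$ may make excursions into domains of $\cK^L \setminus \cK^L_N$ carrying several non-integer coordinates, which those results do not directly cover. The plan is to reduce such an excursion to an alternating path through $\cK^L_{N-1}$ by repeatedly invoking Lemma~\ref{lem:elltoellplus1} to peel off non-integer coordinates while remaining consistent with the direction of travel; here the absence of negative self-regulation assumed throughout Section~\ref{sec:Morsegraphs} is what prevents a coordinate already resolved to an integer value from re-entering a bridge, so the reduction terminates and projects to a genuine S-path. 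Once the collapse statement is established, both parts close and the restriction $\phi:\mathcal{A}^S \to \mathcal{A}^L$ is a well-defined surjection.
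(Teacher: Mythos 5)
Your first part is essentially the paper's own proof of surjectivity, and it is the part that carries the theorem: locate a state of $\cV^{SL}$ inside $A^L$ by repeatedly applying Lemma~\ref{lem:elltoellplus1} together with forward invariance of the attractor, flow the corresponding S-state forward to an S-attractor $A^S$, lift that path with Corollary~\ref{cor:2npath}, and conclude $\phi(A^S)=A^L$ from forward invariance and disjointness of distinct Morse sets. This matches the paper's argument step for step and is correct.

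The second part has a genuine gap: the ``collapse statement'' is false. A forward path in $(\cV^L,\cE^L)$ from $\Psi(a)$ to $\Psi(b)$ does \emph{not} force a forward path from $a$ to $b$ in $(\cV^S,\cE^S)$ --- the paper's Examples section is devoted to exactly this point. Lemma~\ref{lem:3dpath} exhibits such a pair with no S-path, and Lemmas~\ref{lem:escapefromattractor} and~\ref{lem:phinotinjectiveonattractor} build on it to produce escape paths out of S-attractors. Your proposed rescue via the no-negative-self-regulation hypothesis cannot work, since every one of those counterexample networks satisfies that hypothesis. The mechanism that defeats your peeling reduction is not a resolved coordinate re-entering its own bridge but the bidirectional walls of domains in $\cK^L_{N-1}$: while $x_i$ traverses $[\vartheta^-_{j,i},\vartheta^+_{j,i}]$, the corner points of a face transverse to another coordinate $x_k$ can disagree on the sign of $\dot x_k$, so that $\sgn(\cC(\tau),k)=0$ and Definition~\ref{def:Ldomaingraph}(c) inserts a two-way edge with no S-analogue; chaining several of these produces genuinely new paths between states of $\cV^{SL}$. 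Moreover, the sub-claim you are using it for --- that $\phi(A^S)$ is always minimal in $\sMD^L$ --- is itself contradicted by Lemma~\ref{lem:phinotinjectiveonattractor}, where the S-attractor $\text{XC}_1$ loses stability in the L-system. The paper never invokes anything like the collapse statement; its treatment of the image of an S-attractor passes instead through the forward-reachable set of $\Psi(A^S)$ and its terminal strongly connected piece, and the substantive content of the theorem is the containment $\cA^L\subseteq\phi(\sMD^S)$ that your first part already establishes. You should drop the collapse statement entirely and close the argument the way the paper does.
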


  \begin{proof}
  Let   $A^S \in \mathcal{A}^S \subset \cV^S$ be an attractor in $\mathsf{MD}^S$. 
  % \[ U := \{ \Psi(v) \;|\; v \in A^S\} \subset \cK^L \subset \cV^L. \]
    Since $A^S$ is a strongly connected subgraph, for any $v,v' \in A^S$ there exists a path $v \to \cdots \to v'$ in the state transition graph of $\cF^S$. 
    Let $w:=\Psi(v)$ and $w':=\Psi(v')$.  By Corollary~\ref{cor:2npath}, there is then a   path from $w$ to $w'$ in the state transition graph of $\cF^L$. 
    Let 
    \[ U  := \{w' \in \cV^L \;|\; \mbox{ there is a path from any }  w \in \Psi(A^S) \mbox{ to } w'  \in \cV^L\} .\]
    and let $A$ be the maximal strongly connected subgraph containing $U$. Clearly, $A$ is an attractor in $\cA^L$. It follows  from the definition of $\phi$ that
    \[ \phi(A^S) = A.\]
    We now need to show that the restriction of $\phi$ on  $\mathcal{A}^S$ is a surjection. 
    
    Consider an arbitrary attractor $A^L \subset \cA^L$. If $u \in A^L$ such that $(g^L)^{-1} (u) \not \in \cK^L_N$, by Lemma~\ref{lem:elltoellplus1} there is a  path from $u$ to $u'$ with $u' = g^L(\kappa)$ for some $\kappa \in \cK^L_N$.  Since forward paths starting at $u \in A^L$ remain in $A^L$ by definition of the attractor, we conclude that $A^L$ must contain the vertex $u'$.
    To prove $\phi$ is surjective, we assume by contradiction that there is $A^L \subset \cA^L$ such that 
 $A^L \not = \phi(A^S)$ for any attractor  $A^S \in \cA^S$.
   By the argument above, there is a vertex $u' \in A^L$ such that  $u' = g^L(\kappa)$ for some $\kappa \in \cK^L_N$. Let  $\Psi^{-1}(u') =: v' \in \cV^S$. 
   % Since $A^L \not = \phi(A^S) $, by the construction of $\phi$ the vertex $u'$ does not belong to any attractor $A^S$.
  There must be  an attractor $A_1^S$ and $v \in  A_1^S$ such that there is a path from $v'$ to $v$ in the state transition graph of $\cF^S$. By  Corollary~\ref{cor:2npath} this path can be lifted to a path from  $u'$ to $w:=\Psi(v)$ in the state transition graph of $\cF^L$.  Since $u' \in A^L$ it must be that $w \in A^L$ and by construction of  the map $\phi$, 
  %there is a path from $w'$ to a vertex 
  $w \in A_1^L := \phi(A_1^S)$. However, since any attractor must contain all of its forward paths and $u' \in A^L$, this implies that $A^L \supseteq A_1^L$. Since $A_1^L$ is maximal by construction, $A^L = A_1^L = \phi(A_1^S)$.  This contradicts  the assumption that $A^L \not = \phi(A^S)$, which proves the theorem.
    \end{proof}

Theorem~\ref{thm:surjection} says the the long term behavior of $\cF^S$ captures all long time behavior of the richer map $\cF^L$  at the corresponding parameters for networks with no negative self-regulation. Attractors of $\cF^S$ can disappear under this correspondence $\Omega$ but they cannot be created when employing the class of models  $\cF^L$.  The assumption that \textbf{RN} does not contain a negative self-loop is essential here. In fact that result is not true when there is negative self-regulation~\cite{Ironi2011}, because then fixed points may appear in domains $\kappa \in \cK^L \setminus \cK^L_N$.

%
%\Bree{\begin{rem} 
%There is a natural relationship between the attractor surjection $\psi$ and the order-preserving map $\phi$. Let $\tilde{\mathcal{A}}^s \subseteq \mathcal{A}^s$ be the set of switching attractors that can be constructed from perturbed attractors as in~\eqref{eq:constructAs}. Then the restrictions of $\phi$ and $\psi$ to $\tilde{\mathcal{A}}^s$ are equal. 
%\textcolor{red}{TG: 
%\[ \phi_{|\tilde{\mathcal{A}}^s} = \psi_{|\tilde{\mathcal{A}}^s}.\]
%}
%  [Can we leverage this nice fact in the proof? This may come from using $\Psi$ in the proof of Theorem~\ref{thm:ordpreserv}.] 
%\end{rem}
%}

We show a  stronger relationship between the specific types of attractors of $\cF^S$ and $\cF^L$.

\vspace{12pt}

\begin{defn}\label{FP}
Let $\cA^*$ be the set of attractors associated to a map $\cF^*(\omega)$, where $\omega \in \cZ^*$ and $*=S,L$. 
Define $\sFP^* \subset \cA^*$ to be the set of attractors that consist of a single vertex $v \in \cV^*$. We call $A \in \sFP^*$ a \textit{fixed point}.
% Let $\mathsf{FP^*(\eta)} \subset \cA^*(\eta)$ be a set of attractors that consist of a single vertex $v \in \cV^*$ under $\cF^*(\eta)$, for  $\eta \in \cZ^*$  and $*=S,L$.
\end{defn}

\vspace{12pt}

\begin{thm}\label{thm:FP}
 Consider a network $\mathbf{RN}$ with associated multi-valued maps $\cF^S(\omega^S)$ and $\cF^L(\Omega(\omega^S))$. 
 %constructed under the canonical map, and associated %\textit{with  no negative self-regulation}.  
%  Morse decompositions  $\mathsf{MD}^S$ and  $\mathsf{MD}^L$.
   Let $\sFP^S$ and $\sFP^L$ be the associated sets of fixed points.
 % Consider further  Morse graphs  $\mathsf{MD}^S(\omega)$  and  $\mathsf{MD}^L(\eta)$  with $\eta = \Omega(\omega)$.
 %Let $\mathcal{A}_\omega$ and $\mathcal{A}_\eta$ be the collection  of attractors of the $\mathsf{MD}_\omega$ and $\mathsf{MD}_\eta$, respectively.Let $\mathsf{FP_\omega} \subset \mathcal{A}_\omega$ and $\mathsf{FP_\eta} \subset \mathcal{A}_\eta$  be the collection of FP-attractors in the corresponding systems. 
  %be the fixed points of a switching system without negative self-regulation and a canonical perturbed system, respectively.
% Let $\mathsf{MD}_s$ and $\mathsf{MD}_p$ be the Morse decompositions of a switching system and a canonical perturbed system, and
% let $S_s$ and $S_p$ be the terminal nodes of $\mathsf{MD}_s$ and $\mathsf{MD}_p$ respectively.
  % There is a bijective map 
  % \[ \psi : S_s \to S_p. \]
  Then the restriction of $\phi$ to $\sFP^S$ is a bijection 
  \[ \phi : \sFP^S \to \sFP^L. \]
  %such that $\psi\big|_{\mathsf{FP^s}} = \phi\big|_{\mathsf{FP^s}}$.
\end{thm}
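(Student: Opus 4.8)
The plan is to reduce the statement to the single claim that a state $v\in\cV^S$ is a fixed point of $\cF^S$ if and only if $\Psi(v)$ is a fixed point of $\cF^L$, and then read off the bijection. First I would record the structural fact underlying everything: a single-vertex set can be a Morse set only if it carries a self-edge, and by Definitions~\ref{def:Sdomaingraph}(a) and~\ref{def:Ldomaingraph}(a) a self-edge occurs at $v$ exactly when $(g^S)^{-1}(v)$ is an attracting domain, and at $w$ exactly when $(g^L)^{-1}(w)\in\cK^L_N$ is an attracting $N$-domain. In particular every fixed point of $\cF^L$ has its vertex in $\cV^{SL}=\Psi(\cV^S)$, so surjectivity need only examine states in the image of $\Psi$. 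Moreover, once the claim is proved, $\{\Psi(v)\}$ is itself a Morse set whenever $\{v\}\in\sFP^S$, so the smallest Morse set of $\cF^L$ containing $\Psi(\{v\})$ is $\{\Psi(v)\}$; hence $\phi|_{\sFP^S}$ is the map $\{v\}\mapsto\{\Psi(v)\}$, which is injective because $\Psi$ is.

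The second step is to match up the self-edges of the two systems through $\Omega$. Since ``$\kappa$ is attracting'' means its target state equals its own state (i.e. $D^S(v)=v$ for the S-system and $D^L_N(\Psi(v))=\Psi(v)$ for the L-system), and since $\Omega$ is defined precisely so that $D^S(\omega^S)=D^L_N(\Omega(\omega^S))$, the domain $(g^S)^{-1}(v)$ is attracting if and only if $(g^L)^{-1}(\Psi(v))\in\cK^L_N$ is an attracting $N$-domain. This establishes the correspondence of self-loops.

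The third step, which is the main technical work, is to upgrade ``attracting'' to ``no escape,'' i.e. to show an attracting domain has only its self-edge. For the S-system this is immediate from the wall-labeling: if the focal point of $\kappa$ lies in $\kappa$, then at every face $\tau$ (projection index $i$, with $\tau\subset\{x_i=\theta_{j,i}\}$) the sign $\text{sgn}(\Lambda_i^S(\kappa)/\gamma_i^S-\theta_{j,i})$ is opposite to $\text{sgn}(\tau,\kappa)$, so $\scrL^S((\tau,\kappa))=+1$ and every wall is an entrance wall; thus $v$ has no outgoing edge other than its self-edge and $\{v\}\in\sFP^S$. For the L-system the crux is evaluating $\sgn(\cC(\tau),i)$ for a face $\tau$ of an attracting $\kappa\in\cK^L_N$. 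Because $\kappa\in\cK^L_N$, each incoming signal $\sigma_{i,k}^L$ is constant on the closed interval $I_k$, so by continuity $\Lambda_i^L$ takes the single value $\Lambda_i^L(\kappa)$ at every corner point of $\tau$; the regular-parameter condition then forces $\sgn(\cC(\tau),i)=\text{sgn}(\Lambda_i^L(\kappa)/\gamma_i^L-\varphi_i)\neq 0$, which rules out the bidirectional case and makes the sign agree with the S-system. Attractivity makes this sign opposite to $\text{sgn}(\tau,\kappa)$, so $\scrL^L((\tau,\kappa))=+1$ at every wall and $\Psi(v)$ has no escape; hence $\{\Psi(v)\}\in\sFP^L$. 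The converse directions use the same ingredients in reverse: a fixed point on either side forces a self-edge, hence an attracting domain, which the $\Omega$-correspondence transports across.

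Finally I would assemble the bijection: injectivity comes from injectivity of $\Psi$, and for surjectivity any $\{w\}\in\sFP^L$ has $w\in\cV^{SL}$ by the structural fact, so $w=\Psi(v)$ and the claim yields $\{v\}\in\sFP^S$ with $\phi(\{v\})=\{w\}$ (this can alternatively be extracted from Theorem~\ref{thm:surjection} together with the claim, since $\sFP^L\subseteq\cA^L$). The main obstacle is the L-system no-escape step: one must verify that the corner-point quantity $\sgn(\cC(\tau),i)$ collapses to a single nonzero value matching the S-system, which relies on the constancy of incoming signals at the corners of faces of $\cK^L_N$-domains and on the regular-parameter condition to exclude bidirectional walls. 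It is here, and in the definitional choice to place self-edges only on $\cK^L_N$ (the standing hypothesis of no negative self-regulation from the start of this section, via Lemma~\ref{lem:elltoellplus1}), that the absence of negative self-regulation is essential, since it is precisely what prevents fixed points of $\cF^L$ from arising in $\cK^L\setminus\cK^L_N$.
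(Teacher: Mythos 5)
Your proof is correct, but it takes a more self-contained route than the paper's. The paper's argument is essentially one step in one direction: given $\{w\}\in\sFP^L$, Lemma~\ref{lem:elltoellplus1} forces $(g^L)^{-1}(w)\in\cK^L_N$, so $w=\Psi(v)$ for some $v\in\cV^S$, and then the equivalence in Corollary~\ref{cor:2npath} (an exit edge from $v$ in $(\cV^S,\cE^S)$ would lift to an exit path from $w$ in $(\cV^L,\cE^L)$) yields $\{v\}\in\sFP^S$; the well-definedness of the restriction and its injectivity are left implicit. You instead prove the two-sided statement that $v$ is a fixed point of $\cF^S$ if and only if $\Psi(v)$ is a fixed point of $\cF^L$, by (i) matching self-loops through condition (2) of Definition~\ref{def:canonicalperturbation} (the identity $D^S(\omega^S)=D^L_N(\Omega(\omega^S))$ identifies attracting domains on the two sides), and (ii) verifying by direct wall-label computation that an attracting domain has only entrance walls --- on the L-side using the observation that $\Lambda^L_i$ is constant on the corner points of any face of a $\cK^L_N$-domain, so that $\sgn(\cC(\tau),i)$ collapses to the single nonzero S-system sign and the bidirectional case is excluded by regularity. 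That corner-point computation overlaps with what the paper already establishes in Lemma~\ref{lem:equivpath}, so your route redoes a small amount of the appendix's work; in exchange it makes explicit the points the paper glosses over, namely that $\phi(\{v\})=\{\Psi(v)\}$ is itself a singleton Morse set, so the restriction genuinely lands in $\sFP^L$ and is injective --- something worth saying given that Lemma~\ref{lem:phinotinjectiveonattractor} shows injectivity fails for general attractors. Both arguments ultimately rest on the same structural inputs: Lemma~\ref{lem:elltoellplus1} to rule out fixed points over $\cK^L\setminus\cK^L_N$, and the no-negative-self-regulation hypothesis underlying it.
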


\begin{proof}
Consider $w \in \sFP^L$. 
Then by Lemma~\ref{lem:elltoellplus1} the domain $\kappa := (g^L)^{-1}(w) $ must be an N-domain $\kappa \in \cK_N^L$, and therefore $v:=\Psi^{-1}(w) \in \cV^S$.
%Let $u = \Psi^{-1}(v) \in \cV^S$. 
Then it follows from  Corollary~\ref{cor:2npath} that since there are no paths that exit $w$ in $\cV^L$, there are no exiting paths from $v$ in $\cV^S$. 
Therefore $v \in \sFP^S$.

% In the perturbed system, fixed points are only associated to $N$-domains by Lemma~\ref{lem:elltoellplus1} [needs small argument]. Then there is a fixed point $v \in \mathsf{FP^p}$ if and only if $\Psi^{-1}(v) \in \mathsf{FP^s}$, using again Lemma~\ref{lem:equivpath} to see that there are no exiting paths from $\Psi^{-1}(v)$. [Now we have to show that $\phi$ is defined by $\Psi$. We should probably do that in the proof of Theorem~\ref{thm:ordpreserv} when we clean up the notation.]
\end{proof}

We now present important examples that show that these results cannot be strengthened in several natural directions.  Assume in all statements below that 
$\eta= \Omega(\omega)$. We will show that 
\begin{enumerate}
  \item The map $\phi: \mathsf{MD}^S(\omega) \to \mathsf{MD}^L(\eta)$ does not have to be  surjective, see Lemma~\ref{lem:notsurjective}.
  % \item There can be a path $\beta(\kappa) \to \cdots \to \beta(\kappa')$ in a canonical perturbed system even when there is no path from $\kappa$ to $\kappa'$ in the switching system. 3D white wall example. Mention 5D examples to follow. [Maybe remove this one?]
  \item The map $\phi:  \mathsf{MD}^S(\omega) \to \mathsf{MD}^L(\eta)$ does not have to be   injective, see Lemma~\ref{lem:phinotinjectiveingeneral}.
  \item The map  $\phi:  \mathcal{A}^S(\omega) \to \mathcal{A}^L(\eta) $ does not have to be injective, see Lemma~\ref{lem:phinotinjectiveonattractor}.
\end{enumerate}

%!TEX root = PerturbedSystemWriteup-12_27_final.tex

\section{Examples}

%\Bree{\textbf{We need to change switching system to S-system, perturbed system to L-system, domain graph to state transition graph or graph of $\cF$, canonical perturbation to canonical map, $\Phi(v)$ to $\Psi(v)$. I tried to fix these as I saw them, but could easily have missed some. We may also need to change $v,w \in \cV$ to other things, since $v$ and $w$ are directions in the 5D networks.}}

We will now present a series of examples illustrating differences which can arise between $\cF^S$ and $\cF^L$. In all of the following examples, for clarity we suppose that  parameterS $\gamma_i^S = \gamma_i^L = 1$ for all $i \in \{1, \dots ,N \}$. %In addition, we will write a parameter of $\textbf{RN}$ as a set of inequalities. 
%Each collection of inequalities represents an equivalence class of parameters, $\omega \in \cZ^S$, or $\Omega(\omega) \in \cZ^L$. As is implied by the previous sentence, 

We will be comparing the Morse graph of the S-system to the Morse graph of the L-system under the canonical parameter map $\Omega$. Each equivalence class  $\omega \in \cZ^S$, or $\Omega(\omega) \in \cZ^L$ is represented by a collection of inequalities. The inequalities determine the order of the thresholds and the images of maps $D^S$ and $D^L_N$ for every state simultaneously. For example, suppose we have the nodes $i,j,k,m,n \in \textbf{RN}$, with $i,j$ additive, positive inputs to  node $k$, and $m,n$ the outputs of $k$. Assume that $i$ and $j$ only affect node $k$, so that they each have two states, 0 and 1. Since $k$ has two output thresholds, it has three states, 0, 1, and 2. Suppose further that we have the inequality description
\begin{equation} \label{k:ineq}
 l_{k,i} + l_{k,j} < \theta_{m,k} < u_{k,i} + l_{k,j} < \theta_{n,k} <  l_{k,i} + u_{k,j} < u_{k,i} + u_{k,j} 
 \end{equation}
for the node $k$. This implies that the threshold order of $k$ is 
\[ O_k = \{  m < n \} \]
and that the $k$-th component of the map $D^S$ is 
\begin{align*}
  00 \mapsto 0, \quad 10 \mapsto 1, \quad 01 \mapsto 2, \quad 11 \mapsto 2
\end{align*}
This happens because when $i,j$ are in their 0 states, they are contributing a low value $l$, and when they are in their higher 1 states, they are contributing a high value $u$ to node $k$. 
A complete description of $\omega \in \cZ^S$ includes inequality description like (\ref{k:ineq}) for every node $k \in  \mathbf{RN}$.

In this section all the explicit examples of differential equations will be those of S-systems. We therefore drop the superscript $S$ on functions $\sigma^S$, and will use 
$\sigma^-$ and $\sigma^+$ to denote piecewise constant nonlinearities that correspond to negative versus positive regulation in \textbf{RN}, respectively. See Figure~\ref{fig:exampleSandLsystem} (left) for an example of $\sigma^-$. In $\sigma^+$, the lower constant value would occur first.

\vspace{12pt}

\begin{lem}\label{lem:notsurjective}
The map $\phi : \sMD^S \to \sMD^L$ is, in general,  not surjective.
% There exists some $\textnormal{\textbf{RN}}$ with $N=2$, parameter $\omega \in \cZ^S$ and $\Omega(\omega) \in \cZ^L$ such that $\phi$ is not surjective.
\end{lem}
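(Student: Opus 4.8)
The plan is to prove the statement by constructing an explicit counterexample, since the claim is merely that surjectivity can fail, not that it always does. To this end, I would exhibit a specific regulatory network $\mathbf{RN}$ together with a choice of equivalence class $\omega^S \in \cZ^S$ such that the Morse graph of $\cF^L(\Omega(\omega^S))$ contains a Morse node whose associated Morse set lies entirely in $\cK^L \setminus \cK^L_N$, i.e. a recurrent component supported on states with at least one non-integer coordinate. Such a Morse node cannot be in the image of $\phi$, because $\phi(C)$ is by definition the smallest Morse set of $\sMD^L$ containing $\Psi(C) = \{\Psi(v) \mid v \in C\}$, and $\Psi(\cV^S) = \cV^{SL} = g^L(\cK^L_N)$ consists only of integer-valued states. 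Since any Morse set in the image of $\phi$ must contain at least one such integer state, a Morse set living purely in the ``bridge'' region is never an image.

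The key steps, in order, would be as follows. First, I would select a small network — presumably the running two-node example $\{x \dashv y, y \dashv x\}$ from Figure~\ref{fig:exampleRN} or a slight variant — and fix an inequality description determining $O(\omega^S)$ and $D^S(\omega^S)$ of the type displayed in~\eqref{k:ineq}. Second, I would compute the S-system state transition graph $(\cV^S, \cE^S)$ from $\cF^S$ via the wall-labeling $\scrL^S$ of Definition~\ref{def:Swall_labeling}, and read off $\sMD^S$. Third, using the canonical map $\Omega$ of Definition~\ref{def:canonicalperturbation}, I would pass to the L-system, build $(\cV^L, \cE^L)$ using the wall-labeling $\scrL^L$ of Definition~\ref{def:walllabelingfunction} (which now admits bidirectional walls with label $0$), and identify $\sMD^L$. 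Fourth, I would exhibit a Morse set in $\sMD^L$ supported on $\cK^L \setminus \cK^L_N$ and verify, via the observation that $\Psi$ lands in $\cV^{SL}$, that it cannot equal $\phi(C)$ for any $C \in \sMD^S$.

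The main obstacle I anticipate is engineering the example so that a genuine recurrent component — a strongly connected path component of nontrivial size, or at least a Morse set distinct from all images — actually materializes in the bridge region of the L-system. By Lemma~\ref{lem:elltoellplus1}, every state with $n>0$ non-integer coordinates has an edge to a state with $n-1$ non-integer coordinates, so one cannot simply produce an isolated recurrent state stranded on a single bridge; the recurrent behavior must loop through bridge states in a way that the additional bidirectional walls of the L-system permit but the S-system forbids. The delicate part is choosing the parameters so that the label-$0$ (bidirectional) walls of $\scrL^L$ create a cycle among non-$N$-domains that projects down, under forgetting the bridge, to no corresponding cycle in $\cV^S$; this is precisely where the extra freedom of the L-system over the S-system is exploited. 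I expect the verification that this new Morse set is order-theoretically distinct from every $\phi(C)$ to be routine once the state transition graphs are drawn, so the creative effort concentrates entirely on the choice of network and inequalities.
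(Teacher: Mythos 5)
Your proposal is correct and follows essentially the same route as the paper: the paper uses exactly the running two-node toggle $\{x\dashv y,\ y\dashv x\}$ with the bistable inequalities $l_{x,y}<\theta_{y,x}<u_{x,y}$, $l_{y,x}<\theta_{x,y}<u_{y,x}$, and the extra Morse set in $\sMD^L$ is precisely the strongly connected ``four-leaf clover'' of bridge states created by the bidirectional walls, which is the structure you anticipated. The only cosmetic difference is that the paper concludes non-surjectivity by simply counting Morse sets ($2<3$), whereas you identify the obstruction more intrinsically (the new Morse set contains no state of $\cV^{SL}$, hence cannot contain any $\Psi(C)$); both verifications are immediate once the state transition graphs are drawn.
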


\begin{proof}
Consider the network shown in Figure~\ref{fig:exampleRN}, with system of equations 
\begin{equation}
\dot x = -\gamma_xx + \sigma^-_{x,y}(y), \qquad  \dot y = -\gamma_yy + \sigma^-_{y,x}(x)
\end{equation}
and $\omega \in \cZ^S$ satisfying 
\begin{equation}
l_{x,y} < \theta_{y,x} < u_{x,y}, \qquad l_{y,x} < \theta_{x,y} < u_{y,x}.
\end{equation}
The corresponding graphs of $\cF^S(\omega)$ and $\cF^L(\Omega(\omega))$ are shown in Figure~\ref{fig:bistabledomaingraph}, along with the corresponding Morse graphs. It is clear that there cannot exist a surjective map from $\sMD^S$ to $\sMD^L$, since $\sMD^L$ has an extra unstable Morse set.
\end{proof}

In Figure~\ref{fig:bistabledomaingraph}, the nodes in the Morse graphs labeled FP are fixed points in the set $\sFP$. These correspond in both state transition graph (STG) to states with the self loops. The  Morse set labeled FC is an unstable Morse set composed of all of the nodes that are not in the corners of STG. Thus Morse  set FC is generated by the strongly connected four-leaf clover structure in the middle of the STG, and is clearly unstable since it has paths to the fixed points. Since it is composed of more than one node this Morse set is consistent with  cyclic behavior in STG. Since there is cyclic behavior for all (both) variables of the system, we will call this type of Morse set a ``full cycle'', or FC. When there is a Morse set with cyclic behavior in  a subset of all variables, we will label it XC.

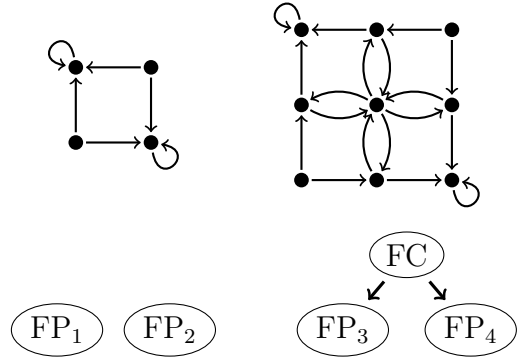
\begin{figure}
\centering
\begin{tikzpicture}[scale=1]
      \node [circle,fill,scale=.5] (bl) at (0,0.5) {};
      \node [circle,fill,scale=.5] (br)at (1,0.5) {};
      \node [circle,fill,scale=.5] (tr) at (1,1.5) {};
      \node [circle,fill,scale=.5] (tl) at (0,1.5) {};
       
      \draw [shorten >= 1pt,shorten <= 1pt, thick,->] (bl) to (tl);
      \draw [shorten >= 1pt,shorten <= 1pt,thick,->] (bl) to (br);
      \draw [shorten >= 1pt,shorten <= 1pt,thick,->] (tr) to (tl);
      \draw [shorten >= 1pt,shorten <= 1pt,thick,->] (tr) to (br);
      \draw [shorten >= 1pt,shorten <= 1pt,thick,->] (tl) to[out=100,in=170, looseness=12] (tl);
      \draw [shorten >= 1pt,shorten <= 1pt,thick,->] (br) to[out=280,in=350,looseness=12] (br);
  
      \node [circle,fill,scale=.5] (1) at (3,0) {};
      \node [circle,fill,scale=.5] (2)at (4,0) {};
      \node [circle,fill,scale=.5] (3) at (5,0) {};
      \node [circle,fill,scale=.5] (4) at (3,1) {};     
      \node [circle,fill,scale=.5] (5) at (4,1) {}; 
      \node [circle,fill,scale=.5] (6) at (5,1) {}; 
      \node [circle,fill,scale=.5] (7) at (3,2) {}; 
      \node [circle,fill,scale=.5] (8) at (4,2) {}; 
      \node [circle,fill,scale=.5] (9) at (5,2) {}; 

        \draw [shorten >= 1pt,shorten <= 1pt,thick,->] (1) to (4);
      \draw [shorten >= 1pt,shorten <= 1pt,thick,->] (4) to (7);
      \draw [shorten >= 1pt,shorten <= 1pt,thick,->] (1) to (2);
      \draw [shorten >= 1pt,shorten <= 1pt,thick,->] (2) to (3);
      \draw [shorten >= 1pt,shorten <= 1pt,thick,->] (9) to (8);
      \draw [shorten >= 1pt,shorten <= 1pt,thick,->] (8) to (7);
      \draw [shorten >= 1pt,shorten <= 1pt,thick,->] (9) to (6);
      \draw [shorten >= 1pt,shorten <= 1pt,thick,->] (6) to (3);
      \draw [shorten >= 1pt,shorten <= 1pt,thick,->] (8) to[bend left] (5);
      \draw [shorten >= 1pt,shorten <= 1pt,thick,<-] (8) to[bend right] (5);
      \draw [shorten >= 1pt,shorten <= 1pt,thick,->] (5) to[bend left] (2);
      \draw [shorten >= 1pt,shorten <= 1pt,thick,<-] (5) to[bend right] (2);
      \draw [shorten >= 1pt,shorten <= 1pt,thick,<-] (4) to[bend left] (5);
      \draw [shorten >= 1pt,shorten <= 1pt,thick,->] (4) to[bend right] (5);
      \draw [shorten >= 1pt,shorten <= 1pt,thick,<-] (5) to[bend left] (6);
      \draw [shorten >= 1pt,shorten <= 1pt,thick,->] (5) to[bend right] (6);
      \draw [shorten >= 1pt,shorten <= 1pt,thick,->] (7) to[out=100,in=170, looseness=12] (7);
      \draw [shorten >= 1pt,shorten <= 1pt,thick,->] (3) to[out=280,in=350,looseness=12] (3);
  \end{tikzpicture}
  
\begin{tikzpicture}[main node/.style={inner sep=2pt,ellipse,fill=white!20,draw}]
    \node[main node] (FP1) at (0,0) {$\text{FP}_1$};
    \node[main node] (FP2) at (1.5,0) {$\text{FP}_2$};
    
    \node[main node] (FP3) at (3.8,0) {$\text{FP}_3$};
    \node[main node] (FP4) at (5.5,0) {$\text{FP}_4$};
    \node[main node] (FC) at (4.65,1) {$\text{FC}$};

    \path[->,thick]
    (FC) edge[shorten >= 3pt,shorten <= 3pt, very thick] node[] {} (FP3)
    (FC) edge[shorten >= 3pt,shorten <= 3pt, very thick] node[] {} (FP4);
\end{tikzpicture}  
  \caption{The corresponding $\cF^S(\omega)$ (top left) and $\cF^L(\Omega(\omega))$ (top right) domain graphs of the bistable example network under a canonical map. Also shown are the Morse graphs of $\cF^S(\omega)$ (bottom left) and $\cF^L(\Omega(\omega))$ (bottom right). }
  \label{fig:bistabledomaingraph}
\end{figure}

Though Lemma~\ref{lem:equivpath} does guarantee that  each path in the graph of $\cF^S$ has a  corresponding path in the graph of $\cF^L$, the converse is not necessarily true.  This insight is key to understanding the limits on the relationship induced by the map $\phi$. 
To illustrate this fact, we will construct the following examples in multiple steps. We first start from simple two dimensional networks and the successively embed this example to more and more complicated examples all the way to  five-dimensional networks. The lower dimensional examples establish how paths can exist in the graph of $\cF^L$ that have no correlate in $\cF^S$.%where only some of the interactions, as well as thresholds, are specified. Partial networks show a particular feature and the final example will have all interactions and thresholds specified.

First consider the (partial) network $z \dashv y$ with $l_{y,z} < \theta_{*,y} < u_{y,z}$ where $\theta_{*,y}$ is an unspecified threshold of $y$, and suppose that $\dot z < 0$. Notice that this is only part of a network in the sense that we would need at least one other interaction in order to define the threshold of $y$. This will be resolved when we embed this interaction into a three-dimensional network. The S-system state transition graph is shown in Figure \ref{fig:2dfundamentalstructure} on the left, and the one for the L-system on the right, where recall $\Psi : \cV^S \to \cV^{SL}$ is the map between states in the S- and L-systems. %Under a canonical map, the resulting domain graph for perturbed system is given by Figure \ref{fig:2dfundamentalstructure}, where $\Psi (v^s_i) = v^p_i$. 
Notice that paths between $v_i$ and $v_j$ that exist in the S-system graph map to paths between $\tilde v_i := \Psi(v_i)$ and $\tilde v_j := \Psi(v_j)$ with intermediate nodes corresponding to (N-1)-domains (see Lemma \ref{lem:equivpath}). However, there is now a path from $\tilde v_3$ to $\tilde v_2$ which does not contain $\tilde v_1$, shown in cyan. The structure of the graph shown in Figure \ref{fig:2dfundamentalstructure} on the right will play a key role in all later examples. We will embed this structure into higher dimensional state transition graphs.

\begin{figure}[h!]
\centering
\begin{minipage}{.5\textwidth}
  \centering
\begin{tikzpicture}[main node/.style={inner sep=3pt,circle,fill=white!20,draw,font=\sffamily\normalsize\bfseries}, scale=1.25]
    \node[main node] (1) at (0,0) {$v_1$};
    \node[main node] (2) at (1,0) {$v_2$};
    \node[main node] (3) at (0,1) {$v_3$};
    \node[main node] (4) at (1,1) {$v_4$};
    
    \path[->,>=angle 90,thick]
    (1) edge[shorten >= 3pt,shorten <= 3pt] node[] {} (2)
    (4) edge[shorten >= 3pt,shorten <= 3pt] node[] {} (3)
    (3) edge[shorten >= 3pt,shorten <= 3pt] node[] {} (1)
    (4) edge[shorten >= 3pt,shorten <= 3pt] node[] {} (2)
    ;
    
    \draw[very thick,->] (-1,-.4) -- +(.6,0) node[anchor=north] {$y$} ;
    \draw[very thick,->] (-1,-.4) -- +(0,.6) node[anchor=west]  {$z$} ;   
  \end{tikzpicture} 
\end{minipage}%
\begin{minipage}{.5\textwidth}
  \centering
  \begin{tikzpicture}[main node/.style={inner sep=3pt,circle, fill=white!20,draw,font=\sffamily\normalsize\bfseries}, scale=1.25]
    \node[main node] (5) at (0,0) {$\tilde v_1$};
    \node[main node] (56) at (1,0) {};
    \node[main node] (6) at (2,0) {$\tilde v_2$};
    \node[main node] (57) at (0,1) {};
    \node[main node] (5678) at (1,1) {};
    \node[main node] (68) at (2,1) {}; 
    \node[main node] (7) at (0,2) {$\tilde v_3$};
    \node[main node] (78) at (1,2) {};
    \node[main node] (8) at (2,2) {$\tilde v_4$};
    
    \path[->,>=angle 90,thick]
    (8) edge[shorten >= 3pt,shorten <= 3pt] node[] {} (78) 
    (78) edge[shorten >= 3pt,shorten <= 3pt] node[] {} (7)
    (5) edge[shorten >= 3pt,shorten <= 3pt] node[] {} (56) 
    (56) edge[shorten >= 3pt,shorten <= 3pt] node[] {} (6)
    
    (5678) edge[shorten >= 3pt,shorten <= 3pt,->,bend right=20] node[] {} (57)
    (57) edge[cyan, shorten >= 3pt,shorten <= 3pt,->,bend right=20] node[] {} (5678)
    (68) edge[shorten >= 3pt,shorten <= 3pt,->,bend right=20] node[] {} (5678)
    (5678) edge[cyan, shorten >= 3pt,shorten <= 3pt,->,bend right=20] node[] {} (68)

    (7) edge[cyan, shorten >= 3pt,shorten <= 3pt] node[] {} (57)
    (57) edge[shorten >= 3pt,shorten <= 3pt] node[] {} (5)
    (8) edge[shorten >= 3pt,shorten <= 3pt] node[] {} (68)
    (68) edge[cyan, shorten >= 3pt,shorten <= 3pt] node[] {} (6)
    (78) edge[shorten >= 3pt,shorten <= 3pt] node[] {} (5678)
    (5678) edge[shorten >= 3pt,shorten <= 3pt] node[] {} (56)     
    ;
    
%   \draw[thick,->] (-1,-.4) -- +(.6,0) node[anchor=north] {$y$} ;
%   \draw[thick,->] (-1,-.4) -- +(0,.6) node[anchor=west]  {$z$} ;
    \end{tikzpicture}
\end{minipage}  
\caption{The fundamental structures of $\cF^S(\omega)$, shown left, and $\cF^L(\Omega(\omega))$, shown right, where $\tilde v_i := \Psi(v_i)$. Note the path from $\tilde v_3$ to $\tilde v_2$, shown in cyan, does not contain $\tilde v_1$. This structure will be embedded in all later examples.}
\label{fig:2dfundamentalstructure}
\end{figure}
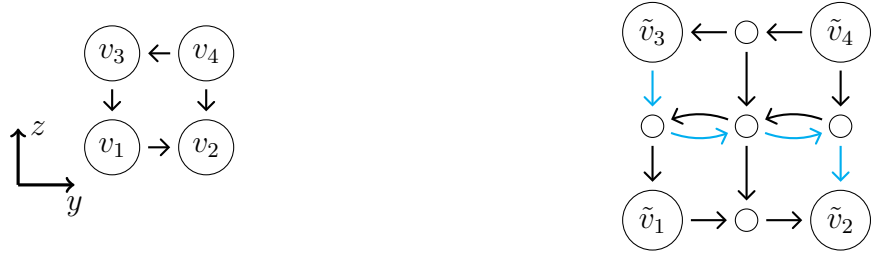

\vspace{12pt}

\begin{lem} \label{lem:3dpath}
Consider the graphs for $\cF^S(\omega)$ and $\cF^L(\Omega(\omega))$. A path from $\Psi(v_i)$ to $\Psi(v_j)$ in the graph of $\cF^L$ % (i.e., $\Psi(v_j) \in (\cF^L)^k \circ \Psi(v_i)$ for some positive integer $k$) 
does not guarantee the existence of a path from $v_i$ to $v_j$ in the graph of $\cF^S$.
% There exists some $\textnormal{\textbf{RN}}$ with $N=3$, parameter $\omega \in \cZ^S$ and $\Omega(\omega) \in \cZ^L$ such that there exists some $v,v'\in \cV_s$, such that for all integers $k$, $v' \notin \cF_s^k(v)$, and there exists some integer $q$ such that $\Psi(v') \in \cF_p^q(\Psi(v))$. 
\end{lem}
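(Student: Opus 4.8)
The statement asserts precisely the failure of the converse to Lemma~\ref{lem:equivpath}, so the plan is to prove it by producing a single explicit counterexample: one equivalence class $\omega$ and one pair $v_i,v_j \in \cV^S$ for which there is a path from $\Psi(v_i)$ to $\Psi(v_j)$ in the graph of $\cF^L(\Omega(\omega))$ but no path from $v_i$ to $v_j$ in the graph of $\cF^S(\omega)$. The construction embeds the two-dimensional fundamental structure of Figure~\ref{fig:2dfundamentalstructure} into a three-dimensional network. Concretely, I would complete the partial interaction $z \dashv y$ (with $\dot z < 0$ on the relevant slab) into a three-node network whose remaining edges create the threshold $\theta_{*,y}$ of $y$ and pin down the dynamics of the added variable $x$; I would then record $\omega \in \cZ^S$ as a list of defining inequalities for every node and push it forward to $\Omega(\omega) \in \cZ^L$.

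The mechanism I would exploit is the cyan bypass of Figure~\ref{fig:2dfundamentalstructure}: in the $(y,z)$-slab of $\cF^L$ there is a route from $\tilde v_3 = \Psi(v_3)$ to $\tilde v_2 = \Psi(v_2)$ that runs through half-integer (bridge) states and never visits $\tilde v_1 = \Psi(v_1)$, whereas in $\cF^S$ every route from $v_3$ to $v_2$ must pass through $v_1$. This extra connectivity has its origin in the corner-point sign rule: by Definition~\ref{def:sgn} a face of the L-system may carry label $\sgn(\cC(\tau),i)=0$, making the wall \emph{bidirectional} in $\scrL^L$ (Definition~\ref{def:walllabelingfunction}), while the corresponding S-wall, evaluated at a single focal point, is always unidirectional or absent. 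The role of the third coordinate $x$ is to position $v_i$ and $v_j$ (identified with the corresponding $N$-domains of the three-dimensional system) so that the only conceivable S-route between them would pass through the analog of $v_1$, and then to cut that route, so that in $\cF^S$ the state $v_i$ has no forward path reaching $v_j$; the $\cF^L$ bypass, which avoids that domain, survives the embedding. Taking $v_i$ and $v_j$ to be the embedded images of $v_3$ and $v_2$ then yields the desired pair.

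Carrying this out requires three verifications. First, using Definition~\ref{def:walllabelingfunction} and the corner-point sign function of Definition~\ref{def:sgn}, I would confirm that the walls realizing the bypass are labeled $0$ or oriented as drawn, so that the $\cF^L$-path genuinely exists; by Corollary~\ref{cor:2npath} the slab structure lifts exactly as in Figure~\ref{fig:2dfundamentalstructure}. Second, I would compute the forward-reachable set of $v_i$ in the graph $(\cV^S,\cE^S)$ and check that $v_j$ does not belong to it. Third, I would check that the chosen inequalities are consistent, i.e. that they simultaneously realize $\dot z<0$, the severed $x$-direction passage, and the fundamental structure in the slab. The main obstacle is the second verification: ruling out a path from $v_i$ to $v_j$ is a \emph{global} reachability claim in the full three-dimensional S-graph, not merely the absence of a single edge, so I must rule out every alternative route and confirm that diverting the $x$-coordinate leaves no roundabout way back to $v_j$. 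Establishing this is where the real content lies, since the existence of the $\cF^L$ bypass follows immediately from the embedded fundamental structure.
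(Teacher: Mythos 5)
Your strategy coincides with the paper's: embed the two-dimensional fundamental structure of Figure~\ref{fig:2dfundamentalstructure} into a three-node network, use the bidirectional walls (corner-point sign $0$ in Definition~\ref{def:sgn}) to create a bypass in $\cF^L$ running through half-integer states, and check that no corresponding path exists in $\cF^S$. The paper realizes this with the negative-feedback three-cycle $z \dashv y \to x \dashv z$ and the inequality class \eqref{parameter3d}. Under that choice $\cF^S(\omega)$ is a $2\times 2\times 2$ cube of states whose edges are all oriented coherently toward a single attracting corner, so the forward-reachable set of $u$ is read off by inspection and visibly excludes $u'$; meanwhile $\cF^L(\Omega(\omega))$ acquires the cyan bypass from $\Psi(u)$ to $\Psi(u')$ shown in Figure~\ref{fig:3dpath}.

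The gap is that, as written, your proposal is a plan rather than a proof. The lemma is an existence claim, so the explicit network, the explicit $\omega \in \cZ^S$, and the finite reachability check in $(\cV^S,\cE^S)$ \emph{are} the entire content, and you defer all three. You correctly flag the global non-reachability in the S-graph as the delicate step; note that the paper's parameter choice is engineered precisely to make that step trivial, since each inequality in \eqref{parameter3d} places the relevant focal-point coordinate on a fixed side of its threshold uniformly over all domains, leaving only eight states and twelve edges to examine. To close the argument you need only commit to that (or an equivalent) concrete choice and record the two small state transition graphs.
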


\begin{proof}
We will now embed the previous partial network in a 3-dimensional  network, given by $z \dashv y \to x \dashv z$, and let $\omega \in \cZ^S$ satisfy
\begin{equation} \label{parameter3d}
l_{y,z} < \theta_{x,y} < u_{y,z}, \qquad l_{x,y} < \theta_{z,x} < u_{x,y} \qquad  l_{z,x} < \theta_{y,z} < u_{z,x}.
\end{equation}
Then $\cF^S(\omega)$ constructed by the wall-labeling function $\scrL^S$ (see~\eqref{parameter3d}) is given in Figure~\ref{fig:3dpath} on the left. Notice that there does not exist a path from $u$ to $u'$. Also note that the structure from Figure~\ref{fig:2dfundamentalstructure} (left) occurs both on the front face and the back face of the cube in Figure~\ref{fig:3dpath} (left). The corresponding $\cF^L(\Omega(\omega))$ is shown on the right with a few nodes removed for visual clarity.
The bidirectional arrows in Figure~\ref{fig:2dfundamentalstructure} (right) correspond to bidirectional arrows in Figure~\ref{fig:3dpath} (right) that allow us to find a path from $\tilde u$ to $\tilde u'$ in $\cF^L(\Omega(\omega))$, where $\tilde u := \Psi (u)$. %A partial graph of $\cF^L(\Omega(\omega)$ is given in Figure~\ref{fig:3dpath} on the right, with a sufficient number of nodes and edges shown to prove the Lemma.
\end{proof}

\begin{figure}[h!]
\centering
\begin{minipage}{.5\textwidth}
  \centering
\begin{tikzpicture}[main node/.style={inner sep=0pt,minimum size=8pt,circle,fill=white!20,draw,font=\sffamily\normalsize\bfseries}, scale=2]
    \node[main node,inner sep=2pt] (1) at (0,0,1) {$u'$};
    \node[main node] (2) at (1,0,1) {};
    \node[main node] (3) at (0,1,1) {};
    \node[main node] (4) at (1,1,1) {};
    \node[main node] (5) at (0,0,0) {}; 
    \node[main node] (6) at (1,0,0) {};
    \node[main node,inner sep=2pt] (7) at (0,1,0) {$u$};
    \node[main node] (8) at (1,1,0) {};
    
    \path[->,>=angle 90,thick]
    (1) edge[shorten >= 3pt,shorten <= 3pt] node[] {} (2)
    (5) edge[shorten >= 3pt,shorten <= 3pt] node[] {} (6)
    (4) edge[shorten >= 3pt,shorten <= 3pt] node[] {} (3)
    (8) edge[shorten >= 3pt,shorten <= 3pt] node[] {} (7)
    (3) edge[shorten >= 3pt,shorten <= 3pt] node[] {} (1)
    (4) edge[shorten >= 3pt,shorten <= 3pt] node[] {} (2)
    (7) edge[shorten >= 3pt,shorten <= 3pt] node[] {} (5)
    (8) edge[shorten >= 3pt,shorten <= 3pt] node[] {} (6)
    (1) edge[shorten >= 3pt,shorten <= 3pt] node[] {} (5)
    (6) edge[shorten >= 3pt,shorten <= 3pt] node[] {} (2)
    (3) edge[shorten >= 3pt,shorten <= 3pt] node[] {} (7)
    (8) edge[shorten >= 3pt,shorten <= 3pt] node[] {} (4)
    ;
    
    \draw[very thick,->] (-1.5,-1,-1) -- ++(.4,0,0) node[anchor=north] {$y$} ;
    \draw[very thick,->] (-1.5,-1,-1) -- ++(0,.4,0) node[anchor=west]  {$z$} ;
    \draw[very thick,->] (-1.5,-1,-1) -- ++(0,0,.4) node[anchor=north]  {$x$} ;     
\end{tikzpicture}
\end{minipage}%
\begin{minipage}{.5\textwidth}
  \centering
\begin{tikzpicture}[main node/.style={inner sep=0pt,minimum size=8pt,circle,fill=white!20,draw,font=\sffamily\normalsize\bfseries}, scale=3]  
    \node[main node] (a) at (0,.5,0) {};
    \node[main node] (b) at (.5,.5,0) {};
    \node[main node] (c) at (1,.5,0) {};
    
    \node[main node,inner sep=2pt] (1) at (0,0,1) {$\tilde u'$};
    \node[main node] (2) at (1,0,1) {};
    \node[main node] (3) at (0,1,1) {};
    \node[main node] (4) at (1,1,1) {};
    \node[main node] (5) at (0,0,0) {}; 
    \node[main node] (6) at (1,0,0) {};
    \node[main node,inner sep=2pt] (7) at (0,1,0) {$\tilde u$};
    \node[main node] (8) at (1,1,0) {};

    \node[main node] (d) at (1,.5,.5) {};
    \node[main node] (e) at (1,.5,1) {};
    \node[main node] (f) at (.5,.5,1) {};
    \node[main node] (g) at (0,.5,1) {};    
      
    \path[->,>=angle 90,thick]
  
    (3) edge[shorten >= 1pt,shorten <= 1pt] node[] {} (g)
    (g) edge[cyan, shorten >= 1pt,shorten <= 1pt] node[] {} (1)
    (4) edge[shorten >= 1pt,shorten <= 1pt] node[] {} (e)
    (e) edge[shorten >= 1pt,shorten <= 1pt] node[] {} (2)   
    (7) edge[cyan, shorten >= 1pt,shorten <= 1pt] node[] {} (a)
    (a) edge[shorten >= 1pt,shorten <= 1pt] node[] {} (5)     
    (8) edge[shorten >= 1pt,shorten <= 1pt] node[] {} (c)
    (c) edge[shorten >= 1pt,shorten <= 1pt] node[] {} (6)     
    (1) edge[shorten >= 1pt,shorten <= 1pt] node[] {} (5)
    (6) edge[shorten >= 1pt,shorten <= 1pt] node[] {} (2)
    (3) edge[shorten >= 1pt,shorten <= 1pt] node[] {} (7)
    (8) edge[shorten >= 1pt,shorten <= 1pt] node[] {} (4) 
    (a) edge[cyan, shorten >= 1pt,shorten <= 1pt,->,bend right=20] node[] {} (b)
    (b) edge[cyan, shorten >= 1pt,shorten <= 1pt,->,bend right=20] node[] {} (c)
    (c) edge[cyan, shorten >= 1pt,shorten <= 1pt,->] node[] {} (d)
    (d) edge[cyan, shorten >= 1pt,shorten <= 1pt,->] node[] {} (e)
    (e) edge[cyan, shorten >= 1pt,shorten <= 1pt,->,bend right=20] node[] {} (f)
    (f) edge[cyan, shorten >= 1pt,shorten <= 1pt,->,bend right=20] node[] {} (g)  
    (a) edge[shorten >= 1pt,shorten <= 1pt,<-,bend left=20] node[] {} (b)
    (b) edge[shorten >= 1pt,shorten <= 1pt,<-,bend left=20] node[] {} (c)
    (e) edge[shorten >= 1pt,shorten <= 1pt,<-,bend left=20] node[] {} (f)
    (f) edge[shorten >= 1pt,shorten <= 1pt,<-,bend left=20] node[] {} (g)
    (1) edge[shorten >= 1pt,shorten <= 1pt] node[] {} (2)
    (5) edge[shorten >= 1pt,shorten <= 1pt] node[] {} (6)
    (4) edge[shorten >= 1pt,shorten <= 1pt] node[] {} (3)
    (8) edge[shorten >= 1pt,shorten <= 1pt] node[] {} (7)   
    ;
    
%   \draw[very thick,->] (-1.5,-1,-1) -- ++(.4,0,0) node[anchor=north] {$y$} ;
%   \draw[very thick,->] (-1.5,-1,-1) -- ++(0,.4,0) node[anchor=west]  {$z$} ;
%   \draw[very thick,->] (-1.5,-1,-1) -- ++(0,0,.4) node[anchor=north]  {$x$} ;   
\end{tikzpicture}
\end{minipage}
\caption{Left: $\cF^S(\omega)$, where $\omega$ satisfies \eqref{parameter3d}. Notice that there is no path from $u$ to $u'$. Right: A partial depiction of $\cF^L(\Omega(\omega))$, where only some nodes are shown, and $\tilde u := \Psi(u)$. A path now exists from $\tilde u$ to $\tilde u'$, shown in cyan. }
\label{fig:3dpath}
\end{figure}
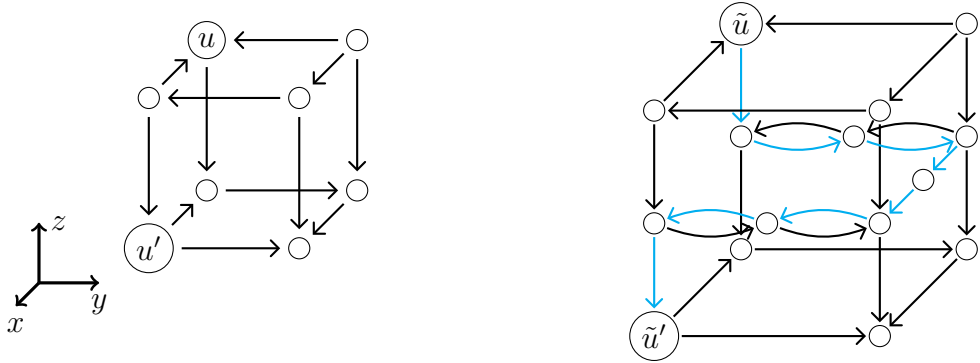

Note that the new path from $\tilde{u}$ to $\tilde{u'}$ in graph of $\cF^L$ does not have any effect on the set of Morse sets in the two systems; in both $\cF^S$ and $\cF^L$ there is unique (attracting) Morse set that consists of the state in the lower right corner of the STG.
 
To show that the existence of such a path can make a difference in the composition of the set of attractors, we will embed the structure of Figure~\ref{fig:3dpath} into a state transition graph for a four-dimensional network, such that $u$ is part of some attractor $A \in \cA^S$ and $u' \notin A$. This  will allow us to find a path escaping from the attractor in $\cF^L$. %\Bree{In other words, Lemma~\ref{lem:3dpath} can hold even when the beginning of the path is in an attractor of $\cF^S$.}  %However the next Lemma will show that this case does exist for $N=4$. We take a partial network of the previous example, $z \dashv y \to x$, and embed it in a larger network, while preserving the same basic structure of $\cF^S$.

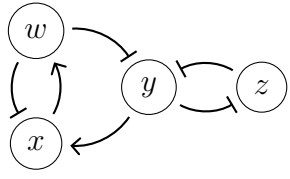
\begin{figure}[h!]
\centering
  \begin{tikzpicture}[main node/.style={circle,fill=white!20,draw,font=\sffamily\normalsize\bfseries}, scale=1]
    \node[main node] (x) at (0,0) {$x$};
    \node[main node] (y) at (1.5,.75) {$y$};
    \node[main node] (z) at (3,.75) {$z$};
    \node[main node] (w) at (0,1.5) {$w$};
        
    \path[->,>=angle 90,thick]
    (x) edge[shorten >= 3pt,shorten <= 3pt,bend right] node[] {} (w)
    (w) edge[shorten >= 3pt,shorten <= 3pt,-|,bend right] node[] {} (x)
    (w) edge[shorten >= 3pt,shorten <= 3pt,-|, bend left] node[] {} (y)
    (y) edge[shorten >= 3pt,shorten <= 3pt, bend left] node[] {} (x)
    (y) edge[shorten >= 3pt,shorten <= 3pt,-|,bend right] node[] {} (z)
    (z) edge[shorten >= 3pt,shorten <= 3pt,-|,bend right] node[] {} (y)
    ;
  \end{tikzpicture}
\caption{The network that with an S-system given by \eqref{4dattractorequations} and parameter satisfying~\eqref{4dattractorparameter} leads to the 4-dimensional attractor in Figure \ref{fig:4dattractor}.} %The embedded 3-D network is shown in blue. }
\label{fig:4dattractornetwork}
\end{figure}

\vspace{12pt}

\begin{lem} \label{lem:escapefromattractor}
Consider the graphs for  for $\cF^S(\omega)$ and $\cF^L(\Omega(\omega))$ and an attractor $A \in \cA^S$. For $v_i \in A$, there can exist $v_j \not \in \cA^S$ such that a path from $\Psi(v_i)$ to $\Psi(v_j)$ exists in the graph of $\cF^L$. 
% Then there is an S-system with  an attractor Morse set $A \in \cA^S$ and there exist 
% $v_i \in A, v_j \not \in \cA^S$ such that there exists  a path from $\Psi(v_i)$ to $\Psi(v_j)$ in the graph of $\cF^L$.
\end{lem}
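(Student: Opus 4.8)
The plan is to prove the statement by exhibiting a single explicit witness; since the claim is existential (``there can exist''), one carefully chosen network and parameter suffices. I would take the four-dimensional network of Figure~\ref{fig:4dattractornetwork}, together with the S-system~\eqref{4dattractorequations} at a parameter $\omega \in \cZ^S$ satisfying the inequalities~\eqref{4dattractorparameter}. The design principle is to choose the network and parameter so that the three-dimensional state transition structure of Figure~\ref{fig:3dpath} — with its distinguished states $u$ and $u'$ and the $\cF^L$-path from $\Psi(u)$ to $\Psi(u')$ that has no $\cF^S$-analogue — appears as a subgraph of the four-dimensional graph $(\cV^S,\cE^S)$ and of its lift $(\cV^L,\cE^L)$. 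The role of the additional node $w$ with its edges $x \to w$, $w \dashv x$, $w \dashv y$ is precisely to turn the recurrent dynamics containing $u$ into a \emph{proper} attractor $A$ of the whole four-dimensional system, while arranging that $u'$ falls outside it.

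First I would use the wall-labeling $\scrL^S$ determined by~\eqref{4dattractorparameter} to assemble $(\cV^S,\cE^S)$ and read off its Morse decomposition, as depicted in Figure~\ref{fig:4dattractor}. The objective of this step is to isolate an attractor $A \in \cA^S$ that is a genuine minimal, strongly connected Morse set exhibiting cyclic behavior, to identify within it the vertex $v_i = u$ (the copy of the state $u$ from Lemma~\ref{lem:3dpath}), and to confirm that the vertex $v_j = u'$ lies outside every attractor, $v_j \notin \cA^S$. Two reachability facts must be checked in $\cF^S$: that no forward path leaves $A$ (so $A$ is indeed an attractor, and in particular $u'$ is not reachable from $u$ in $\cF^S$, consistent with the note that forward paths from an attractor stay inside it), and that $u'$ is itself transient, i.e.\ not contained in any recurrent component.

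Having fixed this configuration, the escape in the L-system follows immediately from Lemma~\ref{lem:3dpath}: the embedded three-dimensional structure supplies a path from $\Psi(u)$ to $\Psi(u')$ in the graph of $\cF^L$, routed through the bidirectional walls (shown in cyan in Figure~\ref{fig:3dpath}) that have no counterpart in $\cF^S$. Since $u = v_i \in A$ while $u' = v_j \notin \cA^S$, this is exactly a path from $\Psi(v_i)$ to $\Psi(v_j)$ with $v_j$ outside all attractors, which proves the lemma. The main obstacle is the verification in the middle step: confirming that $A$ is minimal in the reachability order and that $u'$ is transient amounts to tracing reachability through a sizable four-dimensional state transition graph, a finite but delicate computation best organized by exploiting the two faces on which the structure of Figure~\ref{fig:2dfundamentalstructure} (left) already appears. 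The conceptual content is not in this bookkeeping but in the observation, already isolated in Lemma~\ref{lem:3dpath}, that the extra bidirectional transitions of $\cF^L$ open a route out of an S-attractor; the present lemma merely places $u$ and $u'$ on opposite sides of that attractor's boundary.
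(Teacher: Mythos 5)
Your proposal matches the paper's proof essentially verbatim: both use the four-dimensional network of Figure~\ref{fig:4dattractornetwork} with the S-system~\eqref{4dattractorequations} at a parameter satisfying~\eqref{4dattractorparameter}, verify that the square-marked states of Figure~\ref{fig:4dattractor} form a cyclic attractor containing $u$ while $u'$ lies outside it, and then invoke the embedded three-dimensional structure of Figure~\ref{fig:3dpath} to produce the $\cF^L$-path from $\Psi(u)$ to $\Psi(u')$. The argument is correct and there is no substantive difference from the paper's proof.
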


\begin{proof}
Consider the network shown in Figure \ref{fig:4dattractornetwork}. We associate with this network the following equations, 
\begin{align} \label{4dattractorequations}
\dot x &= -\gamma_xx + \sigma^+_{x,y}(y)\sigma^-_{x,w}(w)\\ \nonumber
\dot y &= -\gamma_yy + \sigma^-_{y,z}(z)\sigma^-_{y,w}(w)\\ \nonumber
\dot z &= -\gamma_zz + \sigma^-_{z,y}(y)\\ \nonumber
\dot w &= -\gamma_ww + \sigma^+_{w,x}(x)
\end{align}
and consider $\omega \in \cZ^S$ satisfying
\begin{align} \label{4dattractorparameter}
l_{x,w}l_{x,y} &< \bfrac{u_{x,w}l_{x,y}}{l_{x,w}u_{x,y}} <\theta_{w,x} < u_{x,w}u_{x,y} \\ \nonumber
l_{y,w}l_{y,z} &< l_{y,w}u_{y,z} < \theta_{z,y} < u_{y,w}l_{y,z} < \theta_{x,y} < u_{y,w}u_{y,z} \\ \nonumber
l_{z,y} &< \theta_{y,z} < u_{z,y} \\ \nonumber
l_{w,x} &< \theta_{x,w} < \theta_{y,w} < u_{w,x}.
\end{align}
This parameter leads to the graph of $\cF^S$ shown in Figure~\ref{fig:4dattractor}, where the arrows between domains were assigned using the wall-labeling function $\scrL^S$ with parameter class given by ~\eqref{4dattractorparameter}. The vertices shown with square shapes are the nodes of a cyclic attractor in $\cF^S$, which can be verified to be an attractor by noting that there are no edges from any square to any circle, and that there is a path from every square node to every other square node. When considering the graph over a canonical map, $\cF^L(\Omega(\omega))$,
  the right half of the top box in Figure~\ref{fig:4dattractor} (corresponding to the lowest values of $w$) is identical to Figure~\ref{fig:3dpath}. Therefore, as in that picture, there is an escape path from $\Psi(u)$ to $\Psi(u')$ in $\cF^L$, even though there is no path from $u$ to $u'$. Since $u$ in Figure~\ref{fig:4dattractor} is in the attractor and $u'$ is not, this path demonstrates the lemma. 
% Notice that the same structure as the previous Lemma is embedded in the right half of the topmost box. Because the fundamental structure of $\cF^S$ is retained, even though the network has been modified, a same path as shown in Figure~\ref{fig:3dpath} from a node in the attractor to a node outside of the attractor. 
\end{proof}

The above proof is based on a network with four nodes. We do not know if such an example exists for $N=3$, but we suspect it cannot without relaxing the constraints on our logic functions $M_j$. 

\begin{figure}
\centering 
\begin{tikzpicture}[main node/.style={inner sep=0pt,minimum size=8pt,circle,fill=white!20,draw}, scale=1.75]
%%% axis %%%
    \draw[very thick, ->] (-1.5,-1,-1) -- (-1,-1,-1) node[anchor=north] {$y$} ;
    \draw[very thick, ->] (-1.5,-1,-1) -- (-1.5,-.5,-1) node[anchor=west]  {$z$} ;
    \draw[very thick, ->] (-1.5,-1,-1) -- (-1.5,-1,-.5) node[anchor=north]  {$x$} ;
    \draw[very thick, dashed, ->] (-1.5,-1.25,-1) -- (-1.5,-1.75,-1) node[anchor=west] {$w$} ;
%%% nodes %%%
%top box back face
    \node[main node,shape=rectangle,fill=lightgray] (1) at (0,0,0) {};
    \node[main node,shape=rectangle,fill=lightgray] (2) at (1,0,0) {};
    \node[main node,shape=rectangle,fill=lightgray] (3) at (2,0,0) {};
    \node[main node,shape=rectangle,fill=lightgray] (4) at (0,1,0) {};    
    \node[main node,shape=rectangle,fill=lightgray, inner sep=1pt] (5) at (1,1,0) {$u$};
    \node[main node,fill=lightgray] (6) at (2,1,0) {};
%top box front face
    \node[main node,fill=green] (7) at (0,0,1) {};
    \node[main node,fill=green, inner sep=1pt] (8) at (1,0,1) {$u'$};
    \node[main node,shape=rectangle,fill=green] (9) at (2,0,1) {};
    \node[main node,fill=green] (10) at (0,1,1) {};   
    \node[main node,fill=green] (11) at (1,1,1) {};
    \node[main node,fill=green] (12) at (2,1,1) {}; 
      
%mid box back face
    \node[main node,shape=rectangle,fill=red] (13) at (0,-2,0) {};
    \node[main node,shape=rectangle,fill=red] (14) at (1,-2,0) {};
    \node[main node,shape=rectangle,fill=red] (15) at (2,-2,0) {};
    \node[main node,shape=rectangle,fill=red] (16) at (0,-1,0) {};    
    \node[main node,shape=rectangle,fill=red] (17) at (1,-1,0) {};
    \node[main node,fill=red] (18) at (2,-1,0) {};
%mid box front face
    \node[main node,fill=green] (19) at (0,-2,1) {};
    \node[main node,fill=green] (20) at (1,-2,1) {};
    \node[main node,shape=rectangle,fill=green] (21) at (2,-2,1) {};
    \node[main node,fill=green] (22) at (0,-1,1) {};    
    \node[main node,fill=green] (23) at (1,-1,1) {};
    \node[main node,fill=green] (24) at (2,-1,1) {};
          
%bottom box back face
    \node[main node,shape=rectangle,fill=red] (25) at (0,-4,0) {};
    \node[main node,shape=rectangle,fill=red] (26) at (1,-4,0) {};
    \node[main node,shape=rectangle,fill=red] (27) at (2,-4,0) {};
    \node[main node,shape=rectangle,fill=red] (28) at (0,-3,0) {};    
    \node[main node,fill=red] (29) at (1,-3,0) {};
    \node[main node,fill=red] (30) at (2,-3,0) {};
%bottom box front face
    \node[main node,shape=rectangle,fill=lightgray] (31) at (0,-4,1) {};
    \node[main node,shape=rectangle,fill=lightgray] (32) at (1,-4,1) {};
    \node[main node,shape=rectangle,fill=lightgray] (33) at (2,-4,1) {};
    \node[main node,shape=rectangle,fill=lightgray] (34) at (0,-3,1) {};    
    \node[main node,fill=black!30] (35) at (1,-3,1) {};
    \node[main node,fill=black!30] (36) at (2,-3,1) {};     

%%% edges %%%
%top box  
  \path[->,>=angle 90,thick]  
    (4) edge[shorten >= 1pt,shorten <= 1pt] node[] {} (5)
    (6) edge[shorten >= 1pt,shorten <= 1pt] node[] {} (5)
    (10) edge[shorten >= 1pt,shorten <= 1pt] node[] {} (11)
    (12) edge[shorten >= 1pt,shorten <= 1pt] node[] {} (11)
    (1) edge[shorten >= 1pt,shorten <= 1pt] node[] {} (2)
    (2) edge[shorten >= 1pt,shorten <= 1pt] node[] {} (3)
    (7) edge[shorten >= 1pt,shorten <= 1pt] node[] {} (8)
    (8) edge[shorten >= 1pt,shorten <= 1pt] node[] {} (9)
    (1) edge[shorten >= 1pt,shorten <= 1pt] node[] {} (4)
    (5) edge[shorten >= 1pt,shorten <= 1pt] node[] {} (2)
    (6) edge[shorten >= 1pt,shorten <= 1pt] node[] {} (3)
    (7) edge[shorten >= 1pt,shorten <= 1pt] node[] {} (10)
    (11) edge[shorten >= 1pt,shorten <= 1pt] node[] {} (8)
    (12) edge[shorten >= 1pt,shorten <= 1pt] node[] {} (9)
    (7) edge[shorten >= 1pt,shorten <= 1pt] node[] {} (1)
    (10) edge[shorten >= 1pt,shorten <= 1pt] node[] {} (4)
    (11) edge[shorten >= 1pt,shorten <= 1pt] node[] {} (5)
    (6) edge[shorten >= 1pt,shorten <= 1pt] node[] {} (12)
    (8) edge[shorten >= 1pt,shorten <= 1pt] node[] {} (2)
    (3) edge[shorten >= 1pt,shorten <= 1pt] node[] {} (9)
    ;
%middle box
  \path[->,>=angle 90, thick]
    (16) edge[shorten >= 1pt,shorten <= 1pt] node[] {} (17)
    (18) edge[shorten >= 1pt,shorten <= 1pt] node[] {} (17)
    (22) edge[shorten >= 1pt,shorten <= 1pt] node[] {} (23)
    (24) edge[shorten >= 1pt,shorten <= 1pt] node[] {} (23)
    (13) edge[shorten >= 1pt,shorten <= 1pt] node[] {} (14)
    (14) edge[shorten >= 1pt,shorten <= 1pt] node[] {} (15)
    (19) edge[shorten >= 1pt,shorten <= 1pt] node[] {} (20)
    (20) edge[shorten >= 1pt,shorten <= 1pt] node[] {} (21)
    (13) edge[shorten >= 1pt,shorten <= 1pt] node[] {} (16)
    (19) edge[shorten >= 1pt,shorten <= 1pt] node[] {} (22)
    (17) edge[shorten >= 1pt,shorten <= 1pt] node[] {} (14)
    (23) edge[shorten >= 1pt,shorten <= 1pt] node[] {} (20)
    (18) edge[shorten >= 1pt,shorten <= 1pt] node[] {} (15)
    (24) edge[shorten >= 1pt,shorten <= 1pt] node[] {} (21)
    (22) edge[shorten >= 1pt,shorten <= 1pt] node[] {} (16)
    (19) edge[shorten >= 1pt,shorten <= 1pt] node[] {} (13)
    (23) edge[shorten >= 1pt,shorten <= 1pt] node[] {} (17)
    (20) edge[shorten >= 1pt,shorten <= 1pt] node[] {} (14)
    (24) edge[shorten >= 1pt,shorten <= 1pt] node[] {} (18)
    (21) edge[shorten >= 1pt,shorten <= 1pt] node[] {} (15)
    ; 
%%bottom box
  \path[->,>=angle 90, thick]
    (29) edge[shorten >= 1pt,shorten <= 1pt] node[] {} (28)
    (35) edge[shorten >= 1pt,shorten <= 1pt] node[] {} (34)
    (30) edge[shorten >= 1pt,shorten <= 1pt] node[] {} (29)
    (36) edge[shorten >= 1pt,shorten <= 1pt] node[] {} (35)
    (26) edge[shorten >= 1pt,shorten <= 1pt] node[] {} (25)
    (27) edge[shorten >= 1pt,shorten <= 1pt] node[] {} (26)
    (32) edge[shorten >= 1pt,shorten <= 1pt] node[] {} (31)
    (33) edge[shorten >= 1pt,shorten <= 1pt] node[] {} (32)
    (25) edge[shorten >= 1pt,shorten <= 1pt] node[] {} (28)
    (31) edge[shorten >= 1pt,shorten <= 1pt] node[] {} (34)
    (29) edge[shorten >= 1pt,shorten <= 1pt] node[] {} (26)
    (35) edge[shorten >= 1pt,shorten <= 1pt] node[] {} (32)
    (30) edge[shorten >= 1pt,shorten <= 1pt] node[] {} (27)
    (36) edge[shorten >= 1pt,shorten <= 1pt] node[] {} (33)
    (34) edge[shorten >= 1pt,shorten <= 1pt] node[] {} (28)
    (35) edge[shorten >= 1pt,shorten <= 1pt] node[] {} (29)
    (36) edge[shorten >= 1pt,shorten <= 1pt] node[] {} (30)
    (31) edge[shorten >= 1pt,shorten <= 1pt] node[] {} (25)
    (32) edge[shorten >= 1pt,shorten <= 1pt] node[] {} (26)
    (33) edge[shorten >= 1pt,shorten <= 1pt] node[] {} (27)
    ;
  \path[->,>=angle 90, thick]
  (18) edge[shorten >= 1pt,shorten <= 1pt, bend right=25, densely dashed] node[] {} (6)
  (19) edge[shorten >= 1pt,shorten <= 1pt, bend right=25, densely dashed] node[] {} (31)    
  ; 
\end{tikzpicture}
\caption{The $\cF^S(\omega)$ from Lemma \ref{lem:escapefromattractor}, with network shown in Figure \ref{fig:4dattractornetwork}, S-system given by~\eqref{4dattractorequations}, and parameter $\omega$ satisfying \eqref{4dattractorparameter}. Nodes in the attractor are depicted as squares, whereas nodes not in the attractor are circles. The color of each node refers to the presence and direction of outgoing edges in the $w$ direction. Green refers to an edge in the $+w$ direction, red refers to an edge in the $-w$ direction, and gray means there is no edge. Two example edges are shown as dashed arrows. The right half of the top box (corresponding to the lowest values of $w$) is identical to Figure \ref{fig:3dpath}, and so an escape path from $u$ to $u'$ exists in $\cF^L(\Omega(\omega))$ as it did previously.}
\label{fig:4dattractor}
\end{figure}
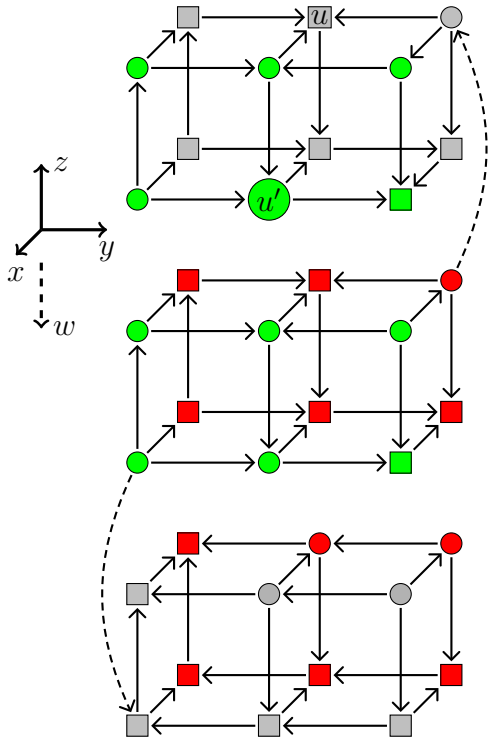

\vspace{12pt}

\begin{lem} \label{lem:phinotinjectiveingeneral}
The order-preserving map $\phi : \sMD^S \to \sMD^L$ is not necessarily injective.
% There exists some $\textnormal{\textbf{RN}}$ with $N=5$, parameters $\omega \in \cZ^S$ and $\Omega(\omega) \in \cZ^L$ such that $\phi: \mathsf{MD}^S(\omega) \to \mathsf{MD}^L(\Omega(\omega))$ is not injective.
\end{lem}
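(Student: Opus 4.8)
The plan is to exhibit the situation anticipated in the remark following Theorem~\ref{thm:ordpreserv}: a network and a class $\omega \in \cZ^S$ for which $\cF^S(\omega)$ possesses two \emph{distinct} Morse sets $U \prec V$ that are forced into a single strongly connected component of $\cF^L(\Omega(\omega))$. Since $\phi(C)$ is defined as the smallest element of $\sMD^L$ containing $\Psi(C)$, and since any strongly connected set of states lies in a unique maximal recurrent component, it is enough to produce a single directed cycle in the graph of $\cF^L$ that meets both $\Psi(U)$ and $\Psi(V)$. Then $\Psi(U)$ and $\Psi(V)$ lie in one Morse set $W \in \sMD^L$, whence $\phi(U) = \phi(V) = W$ with $U \ne V$.

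I would build directly on the four-dimensional construction of Lemma~\ref{lem:escapefromattractor}: the network of Figure~\ref{fig:4dattractornetwork}, the equations \eqref{4dattractorequations}, and the parameter class \eqref{4dattractorparameter}. There the square vertices of Figure~\ref{fig:4dattractor} form a cyclic attractor, which I take to be $U \in \cA^S$, with $u \in U$; the vertex $u'$ lies outside $U$. The first step is to read off from the state transition graph that $u'$ belongs to a \emph{second} recurrent component $V \in \sMD^S$ and that $U \prec V$, i.e.\ a directed path from $V$ down to $U$ already exists in $(\cV^S,\cE^S)$ while none returns from $U$ to $V$. By Corollary~\ref{cor:2npath} this downward path lifts to a path from $\Psi(V)$ to $\Psi(U)$ in the graph of $\cF^L$.

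The decisive step is the reverse direction. Because the right half of the top box of Figure~\ref{fig:4dattractor} reproduces the fundamental structure of Figure~\ref{fig:2dfundamentalstructure}, the bidirectional walls of the L-system furnish a path from $\Psi(u)$ to $\Psi(u')$ --- exactly the escape path established in Lemma~\ref{lem:escapefromattractor} --- with $u \in U$ and $u' \in V$. This yields a path from $\Psi(U)$ to $\Psi(V)$ in $\cF^L$ with no analogue in $\cF^S$. Concatenating it with the lifted downward path $\Psi(V) \to \Psi(U)$ closes a cycle through both images, so $\Psi(U)$ and $\Psi(V)$ inhabit the same Morse set $W$ of $\sMD^L$, and $\phi$ collapses the two distinct sets $U \prec V$ onto the single set $W$, proving non-injectivity.

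The main obstacle is the bookkeeping in the first step: certifying that $u'$ genuinely lies in a recurrent component $V$ distinct from the attractor $U$, rather than being a transient state, and that the \emph{only} obstruction to $U$ and $V$ being strongly connected already in $\cF^S$ is the missing upward path supplied by the L-system. This is carried out by computing the strongly connected components and the reachability order of $(\cV^S,\cE^S)$ directly from the wall-labeling $\scrL^S$ determined by \eqref{4dattractorparameter}; should the landing vertex of the escape path fail to sit in a second Morse set, one augments the network (as in the passage from the three- to the four-dimensional example) until the escape terminates inside a genuine recurrent component, after which the merge follows formally from the escape mechanism together with the path-lifting of Corollary~\ref{cor:2npath}.
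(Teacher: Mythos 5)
Your high-level strategy is the right one and matches the paper's: find two distinct Morse sets $U \prec V$ of $\cF^S(\omega)$, use Corollary~\ref{cor:2npath} to lift the existing downward path $V \to U$, and use an escape path of the type constructed in Lemma~\ref{lem:escapefromattractor} to supply the missing upward path $\Psi(U) \to \Psi(V)$ in $\cF^L(\Omega(\omega))$, closing a cycle that forces $\phi(U) = \phi(V)$. The gap is in the instantiation. In the four-dimensional example of Lemma~\ref{lem:escapefromattractor} the state $u'$ is \emph{not} in a second recurrent component: its only outgoing edges lead back into the cyclic attractor (it is a transient state), and indeed that state transition graph has a unique Morse set, so $\phi$ is vacuously injective there. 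The "bookkeeping" you defer to the first step would therefore come back negative, and your fallback --- "augment the network until the escape terminates inside a genuine recurrent component" --- is precisely the nontrivial content of the lemma, which you have not carried out. The paper resolves this by embedding the 4D mechanism into a specific 5-dimensional network (Figure~\ref{fig:5dnetwork}, left) with a carefully chosen inequality class, arranged so that $\cF^S(\omega)$ has two cyclic Morse sets $\text{XC}_1 \prec \text{XC}_2$ with a path from $\text{XC}_2$ down to $\text{XC}_1$, while the escape path out of $\text{XC}_1$ created by the bidirectional walls lands on a state from which $\text{XC}_2$ is reachable. Designing that network and verifying the two Morse sets is the substance of the proof; without it your argument establishes only that the \emph{mechanism} for non-injectivity exists, not that it is ever realized.
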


\begin{proof}
To show this Lemma, it is sufficient to find a network and parameters in which there exist two paths $u$ to $u'$ and $u'$ to $u$ in $\cF^L(\Omega(\omega))$, where $\Psi^{-1}(u)\in A$ and $\Psi^{-1}(u') \in B$ for some Morse sets $A,B \in \sMD^S$ with $A \neq B$.
%some node $u\in \cV^L$ where $\Psi^{-1}(u)\in A$ for some $A \in \cA^S$ to some node $v\in \cV^L$ where $\Psi^{-1}(v)$ is in some Morse set $B$ with $B \neq A$, and that has a path back to $A$, so that in $\cF^L(\Omega(\omega))$, 
In this way, two distinct Morse sets in the graph of $\cF^S$ will merge into one strongly connected component of $\cF^L$. To do so, we take the previous example network and embed it in a 5-dimensional network, shown in Figure~\ref{fig:5dnetwork} (left), with system of equations given by
\begin{align} \label{5dequations}
\dot x &= -\gamma_xx + \sigma^+_{x,y}(y)\sigma^-_{x,w}(w) \nonumber\\
\dot y &= -\gamma_yy + \sigma^+_{y,v}(v)\sigma^-_{y,w}(w)\sigma^-_{y,z}(z) \nonumber\\
\dot z &= -\gamma_zz + \left[ \sigma^-_{z,y}(y) + \sigma^+_{z,v}(v) \right] \\
\dot w &= -\gamma_ww + \sigma^-_{w,v}(v)\sigma^+_{w,x}(x) \nonumber\\
\dot v &= -\gamma_vv + \sigma^+_{v,x}(x)\sigma^-_{v,y}(y)\sigma^-_{v,w}(w) \nonumber
\end{align}
and $\omega \in \cZ^S$ satisfying
\begin{align}
\left\lbrace \begin{matrix}
l_{x,y}l_{x,w} \\
l_{x,y}u_{x,w} \\
u_{x,y}l_{x,w}
\end{matrix}\right\rbrace &< \theta_{w,x}< \theta_{v,x} < u_{x,y}u_{x,w} \nonumber\\
\left\lbrace \begin{matrix}
l_{y,v}l_{y,w}l_{y,z} \\
l_{y,v}l_{y,w}u_{y,z}
\end{matrix}\right\rbrace &< \theta_{z,y} < l_{y,v}u_{y,w}l_{y,z} < \theta_{v,y} < \theta_{x,y} < \left\lbrace \begin{matrix}
l_{y,v}u_{y,w}u_{y,z} \\
u_{y,v}u_{y,w}u_{y,z} \\
u_{y,v}l_{y,w}l_{y,z} \\
u_{y,v}l_{y,w}u_{y,z} \\
u_{y,v}u_{y,w}u_{y,z}
\end{matrix}\right\rbrace \nonumber\\
l_{z,y}+l_{z,v} &< \theta_{y,z} < \left\lbrace \begin{matrix}
l_{z,y}+u_{z,v} \\
u_{z,y}+l_{z,v} \\
u_{z,y}+u_{z,v}
\end{matrix}\right\rbrace \\
l_{w,v}l_{w,x} &< \left\lbrace \begin{matrix}
u_{w,v}l_{w,x} \\
l_{w,v}u_{w,x}
\end{matrix}\right\rbrace < \theta_{v,w} < \theta_{x,w} < \theta_{y,w} < u_{w,v}u_{w,x} \nonumber\\
l_{v,x}l_{v,y}l_{v,w} &< \left\lbrace \begin{matrix}
l_{v,x}l_{v,y}u_{v,w} \\
l_{v,x}u_{v,y}l_{v,w} \\
u_{v,x}l_{v,y}l_{v,w}
\end{matrix}\right\rbrace < \left\lbrace \begin{matrix}
l_{v,x}u_{v,y}u_{v,w} \\
u_{v,x}l_{v,y}u_{v,w} \\
u_{v,x}u_{v,y}l_{v,w}
\end{matrix}\right\rbrace < \theta_{w,v} < \theta_{x,v} < \theta_{y,v} < u_{v,x}u_{v,y}u_{v,w}. \nonumber
\end{align}
The full state transition graph of $\cF^S(\omega)$ is quite extensive, so we offer a 
schematic as shown in Figure~\ref{fig:schematicXCtoXC}, where each square corresponds to a 3-dimensional subset of nodes, each with coordinates $x$, $y$, and $z$.  The coordinates $v,w$ are represented in the 2D schematic. The square in cyan is shown in greater detail in Figure \ref{fig:5dsneakingpath}. Arrows between squares refer to gradient flow in the given direction, so that no paths exist in the direction opposite the arrow between any pair of nodes with the same $x,y,z$ coordinates. The dark blue rectangles labeled $\text{XC}_1$ and $\text{XC}_2$ represent cyclic Morse sets in the S-system composed of a subset of the nodes in the boxes they overlap. XC$_2$ is an unstable Morse set, because there is a path from a node in XC$_2$ to XC$_1$, which can be seen in Figure~\ref{fig:5dvrow} in the Appendix where we exhibit the full  state transition graph. The Morse graph $\sMG^S$ is shown in Figure~\ref{fig:schematicXCtoXC} (right) with only the black arrow.

%The basic structure of the attractor in Figure~\ref{fig:4dattractor} is preserved in XC$_1$.
In $\cF^L(\Omega(\omega))$, there is a new path in the cyan box which connects $\text{XC}_1$ to $\text{XC}_2$, as depicted in Figure~\ref{fig:5dsneakingpath} and as summarized in the Morse graph for $\cF^L(\Omega(\omega))$ shown in Figure~\ref{fig:schematicXCtoXC} (right), dashed cyan arrow. However, no such path exists in $\cF^S(\omega)$. The escape path connects a node in the attracting cycle $\text{XC}_1$ to a node from which a path exists to $\text{XC}_2$.  Therefore in the Morse graph of $\cF^L(\Omega(\omega))$, the two XC cycles merge into one stable FC cycle.
\end{proof}

\begin{figure}
\centering
\begin{tabular}{ l r }
\begin{tikzpicture}[main node/.style={circle,fill=white!20,draw,font=\sffamily\normalsize\bfseries}, scale=1]
    \node[main node] (x) at (0,0) {$x$};
    \node[main node] (y) at (3.75,1.75) {$y$};
    \node[main node] (z) at (2.2,3.2) {$z$};
    \node[main node] (w) at (0,3.5) {$w$};
    \node[main node] (v) at (1,1.75) {$v$};
        
    \path[->,>=angle 90,thick]
    (x) edge[shorten >= 3pt,shorten <= 3pt, bend left=45, red] node[] {} (w)
    (w) edge[shorten >= 3pt,shorten <= 3pt,-|,bend right=20, red] node[] {} (x)
    (w) edge[shorten >= 3pt,shorten <= 3pt,-|, out=25, in=90, red] node[] {} (y)
    (y) edge[shorten >= 3pt,shorten <= 3pt, bend left=20, red] node[] {} (x)
    (y) edge[shorten >= 3pt,shorten <= 3pt,-|,bend left=12, red] node[] {} (z)
    (z) edge[shorten >= 3pt,shorten <= 3pt,-|, red, bend left=12] node[] {} (y)
    (x) edge[shorten >= 3pt,shorten <= 3pt,->] node[] {} (v)
    (v) edge[shorten >= 3pt,shorten <= 3pt,->,bend right=15] node[] {} (y)
    (y) edge[shorten >= 3pt,shorten <= 3pt,-|,bend right=15] node[] {} (v)
    (v) edge[shorten >= 3pt,shorten <= 3pt,-|,bend right=20] node[] {} (w)
    (w) edge[shorten >= 3pt,shorten <= 3pt,-|,bend right=20] node[] {} (v)
    (v) edge[shorten >= 3pt,shorten <= 3pt,->] node[] {} (z)    
    ;
\end{tikzpicture}
&
\begin{tikzpicture}[main node/.style={circle,fill=white!20,draw,font=\sffamily\normalsize\bfseries}, scale=1]
    \node[main node] (x) at (0,0) {$x$};
    \node[main node] (y) at (3.75,1.75) {$y$};
    \node[main node] (z) at (2.2,3.2) {$z$};
    \node[main node] (w) at (0,3.5) {$w$};
    \node[main node] (v) at (1,1.75) {$v$};
        
    \path[->,>=angle 90,thick]
    (x) edge[shorten >= 3pt,shorten <= 3pt, bend left=45, red] node[] {} (w)
    (w) edge[shorten >= 3pt,shorten <= 3pt,-|,bend right=20, red] node[] {} (x)
    (w) edge[shorten >= 3pt,shorten <= 3pt,-|, out=25, in=90, red] node[] {} (y)
    (y) edge[shorten >= 3pt,shorten <= 3pt, bend left, red] node[] {} (x)
    (y) edge[shorten >= 3pt,shorten <= 3pt,-|,bend left=12, red] node[] {} (z)
    (z) edge[shorten >= 3pt,shorten <= 3pt,-|, red, bend left=12] node[] {} (y)
    (v) edge[shorten >= 3pt,shorten <= 3pt,->,bend right=20] node[] {} (x)
    (x) edge[shorten >= 3pt,shorten <= 3pt,->,bend right=20] node[] {} (v)
    (v) edge[shorten >= 3pt,shorten <= 3pt,-|,bend right=15] node[] {} (y)
    (y) edge[shorten >= 3pt,shorten <= 3pt,-|,bend right=15] node[] {} (v)
    (v) edge[shorten >= 3pt,shorten <= 3pt,-|,bend right=20] node[] {} (w)
    (w) edge[shorten >= 3pt,shorten <= 3pt,-|,bend right=20] node[] {} (v)
    ;
\end{tikzpicture}
\end{tabular}
\caption{Left: The 5-dimensional network for Lemma \ref{lem:phinotinjectiveingeneral}. Right: The 5-dimensional network for Lemma \ref{lem:phinotinjectiveonattractor}. In both cases, the embedded 4D network is shown in red.}
\label{fig:5dnetwork}
\end{figure}
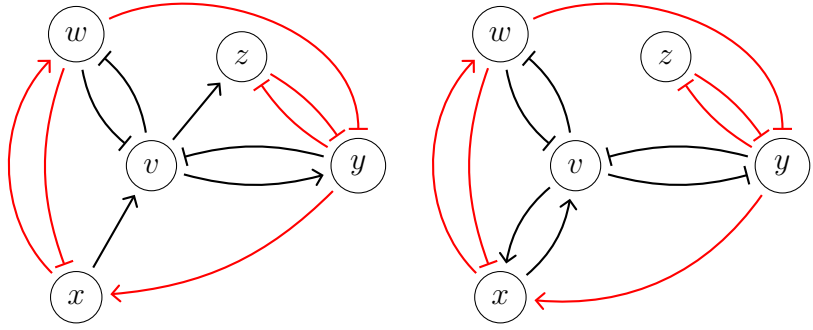

\begin{figure}
\centering
\begin{minipage}{.5\textwidth}
  \centering
\begin{tikzpicture}[main node/.style={inner sep=0,minimum size=.5cm,rectangle,draw,transform shape},scale=1.25]
\foreach \x in {1,2,3,4}
  \foreach \y in {1,2,3,4}
    \node[main node,very thick] (\x\y) at (\x,\y) {};

\draw[very thick, dashed, ->] (.25,4.75) -- ++(0,-.5) node[anchor=north]{$w$};
\draw[very thick, dashed, ->] (.25,4.75) -- ++(.5,0) node[anchor=north]{$v$};

\path[->,>=angle 90, thick]
(21) edge[shorten >= 1pt,shorten <= 1pt] node[] {} (11)
(31) edge[shorten >= 1pt,shorten <= 1pt] node[] {} (21)
(41) edge[shorten >= 1pt,shorten <= 1pt] node[] {} (31)
(22) edge[shorten >= 1pt,shorten <= 1pt] node[] {} (12)
(32) edge[shorten >= 1pt,shorten <= 1pt] node[] {} (22)
(42) edge[shorten >= 1pt,shorten <= 1pt] node[] {} (32)
(23) edge[shorten >= 1pt,shorten <= 1pt] node[] {} (13)
(33) edge[shorten >= 1pt,shorten <= 1pt] node[] {} (23)
(43) edge[shorten >= 1pt,shorten <= 1pt] node[] {} (33)

(21) edge[shorten >= 1pt,shorten <= 1pt] node[] {} (22)
(31) edge[shorten >= 1pt,shorten <= 1pt] node[] {} (32)
(41) edge[shorten >= 1pt,shorten <= 1pt] node[] {} (42)
(22) edge[shorten >= 1pt,shorten <= 1pt] node[] {} (23)
(32) edge[shorten >= 1pt,shorten <= 1pt] node[] {} (33)
(42) edge[shorten >= 1pt,shorten <= 1pt] node[] {} (43)
(23) edge[shorten >= 1pt,shorten <= 1pt] node[] {} (24)
(33) edge[shorten >= 1pt,shorten <= 1pt] node[] {} (34)
(43) edge[shorten >= 1pt,shorten <= 1pt] node[] {} (44)
;
\node[main node, cyan, very thick] () at (1,4) {};

\draw[very thick, blue] (3.125,3.875) rectangle (3.875,4.125);
\node[inner sep=1,anchor=south] (XC2) at (3.5,4.25) {$\text{XC}_2$};
\draw[ultra thick, blue] (.87,.87) rectangle (1.13,4.13);
\node[inner sep=0,anchor=east] (XC1) at (.85,2.5) {$\text{XC}_1$};
\end{tikzpicture}
\end{minipage}%
\begin{minipage}{.5\textwidth}
  \centering
\begin{tikzpicture}[main node/.style={inner sep=4pt,circle,fill=white!20,draw}]
    \node[main node] (XC1) at (0,0) {$\text{XC}_1$};
    \node[main node] (XC2) at (0,2.5) {$\text{XC}_2$};      
    \path[->,thick]
    (XC1) edge[shorten >= 3pt,shorten <= 3pt, densely dashed, cyan, very thick, bend right] node[] {} (XC2)
    (XC2) edge[shorten >= 3pt,shorten <= 3pt, very thick, bend right] node[] {} (XC1)
    ;
\end{tikzpicture}
\end{minipage}
\caption{Left: A general schematic of $\cF^S(\omega)$ from Lemma \ref{lem:phinotinjectiveingeneral}. Each square corresponds to a 3D subset of nodes, with coordinates differing in $x$, $y$, and $z$ only. The square in cyan is shown in greater detail in Figure \ref{fig:5dsneakingpath}. Arrows between squares refer to gradient flow in the given direction, i.e. no paths exist in the direction opposite the arrow. The dark blue rectangles labeled $\text{XC}_1$ and $\text{XC}_2$ represent cyclic Morse sets composed of a subset of the nodes in the boxes they overlap. In $\cF^L(\Omega(\omega))$, there is a new path in the cyan box which connects $\text{XC}_1$ to $\text{XC}_2$. However, no such path exists in $\cF^S(\omega)$. The full left column and top row are provided in Figures \ref{fig:5dwcolumn} and \ref{fig:5dvrow} (left) respectively. Right: The corresponding (partial) Morse graph. The cyan edge is added only in $\cF^L(\Omega(\omega))$, merging $\text{XC}_1$ and $\text{XC}_2$ into one strongly connected component, showing that $\phi$, in general,  is not injective between the Morse decompositions.}
\label{fig:schematicXCtoXC}
\end{figure}
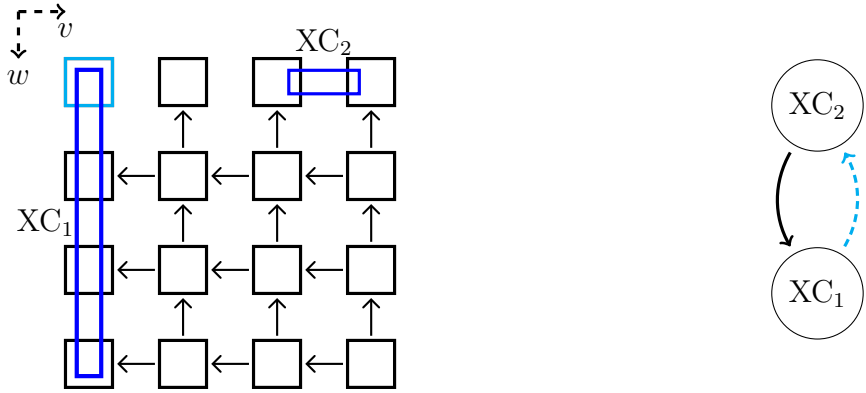

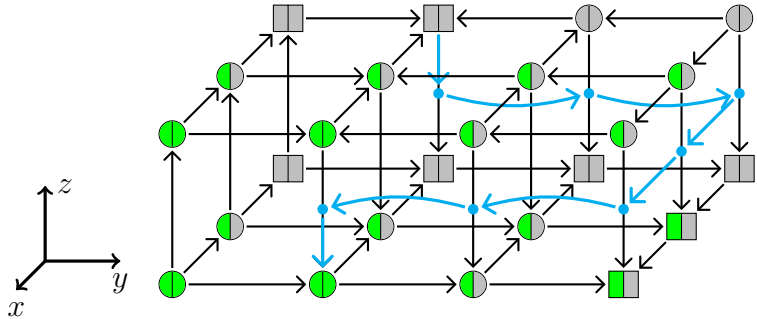
\begin{figure}
\centering
\begin{tikzpicture}[main node/.style={inner sep=0pt,minimum size=10pt,circle split,rotate=90,draw},
main node attractor/.style={inner sep=0pt,minimum size=10pt,rectangle split,rectangle split parts=2,rotate=90,draw}, scale=2]
%%% axis %%%
    \draw[very thick, ->] (-2,-1,-1) -- (-1.5,-1,-1) node[anchor=north] {$y$} ;
    \draw[very thick, ->] (-2,-1,-1) -- (-2,-.5,-1) node[anchor=west]  {$z$} ;
    \draw[very thick, ->] (-2,-1,-1) -- (-2,-1,-.5) node[anchor=north]  {$x$} ;
%   \draw[very thick, dashed, ->] (-2,-1.25,-1) -- (-2,-1.75,-1) node[anchor=west] {$w$} ;
%%% nodes %%%
%top box back face
    \node[main node attractor ,rectangle split part fill={lightgray,lightgray}] (1) at (0,0,0) {};
    \node[main node attractor,rectangle split part fill={lightgray,lightgray}] (2) at (1,0,0) {};
    \node[main node attractor,rectangle split part fill={lightgray,lightgray}] (3) at (2,0,0) {};
    \node[main node attractor,rectangle split part fill={lightgray,lightgray}] (4) at (3,0,0) {};
    \node[main node attractor,rectangle split part fill={lightgray,lightgray}] (5) at (0,1,0) {};
    \node[main node attractor,rectangle split part fill={lightgray,lightgray}] (6) at (1,1,0) {};
    \node[main node,circle split part fill={lightgray,lightgray}] (7) at (2,1,0) {};
    \node[main node,circle split part fill={lightgray,lightgray}] (8) at (3,1,0) {};
%top box middle face
    \node[main node,circle split part fill={green,lightgray}] (9) at (0,0,1) {};
    \node[main node,circle split part fill={green,lightgray}] (10) at (1,0,1) {};
    \node[main node,circle split part fill={green,lightgray}] (11) at (2,0,1) {};
    \node[main node attractor,rectangle split part fill={green,lightgray}] (12) at (3,0,1) {};
    \node[main node,circle split part fill={green,lightgray}] (13) at (0,1,1) {};
    \node[main node,circle split part fill={green,lightgray}] (14) at (1,1,1) {};
    \node[main node,circle split part fill={green,lightgray}] (15) at (2,1,1) {};
    \node[main node,circle split part fill={green,lightgray}] (16) at (3,1,1) {};
%top box front face
    \node[main node,circle split part fill={green,green}] (17) at (0,0,2) {};
    \node[main node,circle split part fill={green,green}] (18) at (1,0,2) {};
    \node[main node,circle split part fill={green,lightgray}] (19) at (2,0,2) {};
    \node[main node attractor,rectangle split part fill={green,lightgray}] (20) at (3,0,2) {};
    \node[main node,circle split part fill={green,green}] (21) at (0,1,2) {};
    \node[main node,circle split part fill={green,green}] (22) at (1,1,2) {};
    \node[main node,circle split part fill={green,lightgray}] (23) at (2,1,2) {};
    \node[main node,circle split part fill={green,lightgray}] (24) at (3,1,2) {};

\path[->,>=angle 90, thick]
(5) edge[shorten >= 1pt,shorten <= 1pt] node[] {} (6)
(7) edge[shorten >= 1pt,shorten <= 1pt] node[] {} (6)
(8) edge[shorten >= 1pt,shorten <= 1pt] node[] {} (7)
(1) edge[shorten >= 1pt,shorten <= 1pt] node[] {} (2)
(2) edge[shorten >= 1pt,shorten <= 1pt] node[] {} (3)
(3) edge[shorten >= 1pt,shorten <= 1pt] node[] {} (4)
(13) edge[shorten >= 1pt,shorten <= 1pt] node[] {} (14)
%(15) edge[shorten >= 1pt,shorten <= 1pt] node[] {} (14)
(16) edge[shorten >= 1pt,shorten <= 1pt] node[] {} (15)
(9) edge[shorten >= 1pt,shorten <= 1pt] node[] {} (10)
(10) edge[shorten >= 1pt,shorten <= 1pt] node[] {} (11)
(11) edge[shorten >= 1pt,shorten <= 1pt] node[] {} (12)
(21) edge[shorten >= 1pt,shorten <= 1pt] node[] {} (22)
(23) edge[shorten >= 1pt,shorten <= 1pt] node[] {} (22)
(24) edge[shorten >= 1pt,shorten <= 1pt] node[] {} (23)
(17) edge[shorten >= 1pt,shorten <= 1pt] node[] {} (18)
(18) edge[shorten >= 1pt,shorten <= 1pt] node[] {} (19)
(19) edge[shorten >= 1pt,shorten <= 1pt] node[] {} (20)
(1) edge[shorten >= 1pt,shorten <= 1pt] node[] {} (5)
(9) edge[shorten >= 1pt,shorten <= 1pt] node[] {} (13)
(17) edge[shorten >= 1pt,shorten <= 1pt] node[] {} (21)
(6) edge[shorten >= 1pt,shorten <= 1pt] node[] {} (2)
(14) edge[shorten >= 1pt,shorten <= 1pt] node[] {} (10)
(22) edge[shorten >= 1pt,shorten <= 1pt] node[] {} (18)
(7) edge[shorten >= 1pt,shorten <= 1pt] node[] {} (3)
(15) edge[shorten >= 1pt,shorten <= 1pt] node[] {} (11)
(23) edge[shorten >= 1pt,shorten <= 1pt] node[] {} (19)
(8) edge[shorten >= 1pt,shorten <= 1pt] node[] {} (4)
(16) edge[shorten >= 1pt,shorten <= 1pt] node[] {} (12)
(24) edge[shorten >= 1pt,shorten <= 1pt] node[] {} (20)
(13) edge[shorten >= 1pt,shorten <= 1pt] node[] {} (5)
(21) edge[shorten >= 1pt,shorten <= 1pt] node[] {} (13)
(9) edge[shorten >= 1pt,shorten <= 1pt] node[] {} (1)
(17) edge[shorten >= 1pt,shorten <= 1pt] node[] {} (9)
(14) edge[shorten >= 1pt,shorten <= 1pt] node[] {} (6)
(22) edge[shorten >= 1pt,shorten <= 1pt] node[] {} (14)
(10) edge[shorten >= 1pt,shorten <= 1pt] node[] {} (2)
(18) edge[shorten >= 1pt,shorten <= 1pt] node[] {} (10)
(15) edge[shorten >= 1pt,shorten <= 1pt] node[] {} (7)
(11) edge[shorten >= 1pt,shorten <= 1pt] node[] {} (3)
(19) edge[shorten >= 1pt,shorten <= 1pt] node[] {} (11)
(8) edge[shorten >= 1pt,shorten <= 1pt] node[] {} (16)
(4) edge[shorten >= 1pt,shorten <= 1pt] node[] {} (12)
(12) edge[shorten >= 1pt,shorten <= 1pt] node[] {} (20)
;

%%%the sneaking path
%extra nodes
\node[circle, fill=cyan,inner sep=0pt,minimum size=4pt] (a) at (1,.5,0) {};
\node[circle, fill=cyan,inner sep=0pt,minimum size=4pt] (b) at (2,.5,0) {};
\node[circle, fill=cyan,inner sep=0pt,minimum size=4pt] (c) at (3,.5,0) {};
\node[circle, fill=cyan,inner sep=0pt,minimum size=4pt] (d) at (3,.5,1) {};
\node[circle, fill=cyan,inner sep=0pt,minimum size=4pt] (e) at (3,.5,2) {};
\node[circle, fill=cyan,inner sep=0pt,minimum size=4pt] (f) at (2,.5,2) {};
\node[circle, fill=cyan,inner sep=0pt,minimum size=4pt] (g) at (1,.5,2) {};

%extra edges
\path[->,>=angle 90, thick]
(6) edge[shorten >= 1pt,shorten <= 1pt,cyan, very thick] node[] {} (a)
(a) edge[shorten >= 1pt,shorten <= 1pt,cyan, very thick, bend right=15] node[] {} (b)
(b) edge[shorten >= 1pt,shorten <= 1pt,cyan, very thick, bend right=15] node[] {} (c)
(c) edge[shorten >= 1pt,shorten <= 1pt,cyan, very thick] node[] {} (d)
(d) edge[shorten >= 1pt,shorten <= 1pt,cyan, very thick] node[] {} (e)
(e) edge[shorten >= 1pt,shorten <= 1pt,cyan, very thick, bend right=15] node[] {} (f)
(f) edge[shorten >= 1pt,shorten <= 1pt,cyan, very thick, bend right=15] node[] {} (g)
(g) edge[shorten >= 1pt,shorten <= 1pt,cyan, very thick] node[] {} (18)

%the rest of the edges so they are in front of the cyan path
(16) edge[shorten >= 1pt,shorten <= 1pt] node[] {} (24)
(23) edge[shorten >= 1pt,shorten <= 1pt] node[] {} (15)
(15) edge[shorten >= 1pt,shorten <= 1pt] node[] {} (14)
;
\end{tikzpicture}
\caption{The full set of nodes which correspond to the upper left cyan square in both Figures \ref{fig:schematicXCtoXC} and \ref{fig:schematicXCtoFP}. Nodes in $\text{XC}_1$ are denoted as squares; all other nodes are circles. The cyan path exists in $\cF^L(\Omega(\omega))$ from a node in $\text{XC}_1$ to a node not in it, but still in the 3D set of nodes, represented by the cyan box of Figures \ref{fig:schematicXCtoXC} and \ref{fig:schematicXCtoFP}. This path makes use of the same structure as the previous examples. The color of each node refers to the outgoing arrows in the $v$ and $w$ directions. The left half of each node corresponds to $w$ and the right half to $v$. Green means there is an edge from the node to the next corresponding node in the $+$ direction. Gray means no outgoing edge in the corresponding direction. There are no edges in the $-$ direction, since this graph  represents the lowest states of the $v$ and $w$ directions.
}
\label{fig:5dsneakingpath}
\end{figure}

\begin{lem} \label{lem:phinotinjectiveonattractor}
The order-preserving map $\phi : \cA^S \to \cA^L$ restricted only to attractors is, in general,  not injective.
% There exists some $\textnormal{\textbf{RN}}$ with $N=5$, parameters $\omega \in \cZ^S$ and $\Omega(\omega) \in \cZ^L$ such that $\phi:  \mathcal{A}^S(\omega) \to \mathcal{A}^L(\Omega(\omega))$ is not injective.
\end{lem}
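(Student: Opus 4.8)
The plan is to exhibit a single network together with an S-parameter class $\omega \in \cZ^S$ for which $\cF^S(\omega)$ possesses two \emph{distinct} cyclic attractors $A,B \in \cA^S$ whose images $\Psi(A),\Psi(B)$ are absorbed into one and the same attractor of $\cF^L(\Omega(\omega))$; this forces $\phi(A)=\phi(B)$ and hence non-injectivity of $\phi|_{\cA^S}$. The network I would use is the $5$-dimensional network on the right of Figure~\ref{fig:5dnetwork}, obtained from the $4$-dimensional network of Lemma~\ref{lem:escapefromattractor} by the same embedding strategy as in Lemma~\ref{lem:phinotinjectiveingeneral}, but with the coupling between $x$ and $v$ made bidirectional so that the two cyclic Morse sets become incomparable in the S-system reachability order rather than one dominating the other.

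First I would fix $\omega$ by an explicit list of inequalities (in the spirit of the display following~\eqref{5dequations}), read off the wall-labeling $\scrL^S$, and record the resulting state transition graph $(\cV^S,\cE^S)$ via a schematic analogous to Figure~\ref{fig:schematicXCtoXC} (see Figure~\ref{fig:schematicXCtoFP}). The crucial point here, and the essential difference from Lemma~\ref{lem:phinotinjectiveingeneral}, is that both cyclic sets $A=\text{XC}_1$ and $B=\text{XC}_2$ are now \emph{minimal} in the reachability order: there is no directed path in $(\cV^S,\cE^S)$ from any node of $A$ to any node of $B$, nor in the reverse direction. Hence both are genuine attractors, $A,B \in \cA^S$, and (since the feedback cycle among $\{x,y,z\}$ shown in red in Figure~\ref{fig:5dnetwork} makes each Morse set genuinely cyclic of type XC, not a fixed point) Theorem~\ref{thm:FP} does not obstruct a merge.

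Next I would locate inside $(\cV^L,\cE^L)$ the embedded fundamental structure of Figure~\ref{fig:3dpath}, detailed in Figure~\ref{fig:5dsneakingpath}, which produces a ``sneaking'' path from a node $u \in A$ to a node $u' \notin A$ exactly as in Lemma~\ref{lem:escapefromattractor}. Because $u' \notin \cA^S$ it has a forward path in $(\cV^S,\cE^S)$ to an attractor, and the parameters are chosen so that this attractor is $B$; lifting by Corollary~\ref{cor:2npath} and composing with the sneaking path yields a path from $\Psi(A)$ to $\Psi(B)$ in $\cF^L$. The bidirectional $x \leftrightarrow v$ coupling is arranged precisely so that the symmetric sneaking structure in the companion block provides the reverse path from $\Psi(B)$ to $\Psi(A)$. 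Together these give a directed cycle through nodes of both $\Psi(A)$ and $\Psi(B)$, so $\Psi(A)\cup\Psi(B)$ lies in a single strongly connected component of $(\cV^L,\cE^L)$; by the definition of $\phi$ in Theorem~\ref{thm:ordpreserv} this forces $\phi(A)=\phi(B)$. (Note this is also the only scenario consistent with Theorem~\ref{thm:surjection}: a one-directional escape from the attractor $A$ would place $\Psi(A)$ in a non-minimal Morse set, contradicting $\phi(A)\in\cA^L$, so the escapes must close up into a cycle.)

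The main obstacle is the S-system verification: one must confirm, across the full $5$-dimensional state transition graph, that $A$ and $B$ are truly incomparable attractors with no connecting path in either direction, since any stray S-system path between them would collapse the construction back into the weaker statement of Lemma~\ref{lem:phinotinjectiveingeneral}. This is a finite but extensive edge-direction bookkeeping, which I would tame by reusing the already-analyzed fundamental structure of Figure~\ref{fig:5dsneakingpath} and by presenting only the schematic plus the two boundary slices (the analogues of the left column and top row referenced in Lemma~\ref{lem:phinotinjectiveingeneral}), rather than drawing every state and edge explicitly.
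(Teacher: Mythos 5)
Your plan diverges from the paper's proof in an essential way, and the divergence hides a genuine gap. The paper's example for Lemma~\ref{lem:phinotinjectiveonattractor} does \emph{not} merge two cyclic attractors into one large cycle. Its S-system has two attractors of different types: the cycle $\text{XC}_1$ and a \emph{fixed point} $\text{FP}$ (see Figure~\ref{fig:schematicXCtoFP}, which shows $\text{XC}_1$ and $\text{FP}$, not $\text{XC}_1$ and $\text{XC}_2$). In $\cF^L(\Omega(\omega))$ a single one-directional sneaking path (the structure of Figure~\ref{fig:5dsneakingpath}) lets $\text{XC}_1$ escape and drain into $\text{FP}$; $\text{XC}_1$ simply loses stability, $\text{FP}$ becomes the unique L-attractor, and both S-attractors are sent to it. Your two objections to exactly this route are misreadings: Theorem~\ref{thm:FP} only asserts that the restriction of $\phi$ to $\sFP^S$ is a bijection onto $\sFP^L$; it says nothing about whether a \emph{non}-fixed-point attractor can also land on a fixed point of the L-system, which is precisely what happens here. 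And your parenthetical "the escapes must close up into a cycle" presumes that $\Psi(A)$ must sit inside an L-attractor for $\phi(A)$ to be one; the paper's usage (cf.\ the proof of Theorem~\ref{thm:surjection}, where $\phi(A^S)$ is taken to be the attractor reachable from $\Psi(A^S)$) is exactly the one-directional scenario you rule out.

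The concrete gap this creates: your construction needs an S-parameter with \emph{two incomparable cyclic attractors} and \emph{two} independent sneaking paths in the L-system, one in each direction, so that $\Psi(A)\cup\Psi(B)$ becomes strongly connected. You assert that the bidirectional $x\leftrightarrow v$ coupling "provides the reverse path," but you never exhibit it, and nothing in Lemmas~\ref{lem:escapefromattractor} or~\ref{lem:phinotinjectiveingeneral} produces a second, oppositely-directed copy of the Figure~\ref{fig:3dpath} structure; in Lemma~\ref{lem:phinotinjectiveingeneral} the return path from $\text{XC}_2$ to $\text{XC}_1$ already exists in the S-system, which is exactly why $\text{XC}_2$ is there \emph{not} an attractor. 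Until you construct both sneaking paths explicitly (a substantially harder inequality-engineering task than the paper's, and one whose feasibility you give no evidence for), the proof is incomplete. The repair is either to carry out that double construction in full, or to adopt the paper's mechanism: one cyclic attractor, one fixed-point attractor, and a single escape path in $\cF^L$ from the former into the basin of the latter.
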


\begin{proof}
To show this we modify  the network and parameter of the previous Lemma. Consider  the network in Figure~\ref{fig:5dnetwork} right. Again, the basic structure of $\cF^S$ from Figure~\ref{fig:4dattractor} will be embedded in XC$_1$ of the first column in the schematic Figure~\ref{fig:schematicXCtoFP} (left). We will exhibit a  path from a node $u\in \cV^L$ where $\Psi^{-1}(u)\in A$ for some $A \in \cA^S$ to a node $u'\in \cV^L$ where $\Psi^{-1}(u')\in B$ for an attractor $B \neq A$.  To do so, we endow the network shown in Figure~\ref{fig:5dnetwork} right with an S-system 
\begin{align} \label{5dequations}
\dot x &= -\gamma_xx + \left[ \sigma^+_{xy}(y)+\sigma^+_{xv}(v)\right] \sigma^-_{xw}(w) \\ \nonumber
\dot y &= -\gamma_yy + \sigma^-_{yz}(z)\sigma^-_{yw}(w)\sigma^-_{yv}(v) \\ \nonumber
\dot z &= -\gamma_zz + \sigma^-_{zy}(y) \\ \nonumber
\dot w &= -\gamma_ww + \sigma^+_{wx}(x)\sigma^-_{wv}(v) \\ \nonumber
\dot v &= -\gamma_vv + \sigma^+_{vx}(x)\sigma^-_{vy}(y)\sigma^-_{vw}(w)
\end{align}
and we consider any $\omega \in \cZ^S$ satisfying
\begin{align}
(l_{x,y}+l_{x,v})l_{x,w} &<  \left\lbrace \begin{matrix}(u_{x,y}+l_{x,v})l_{x,w} \\
(l_{x,y}+u_{x,v})l_{x,w}\\
(l_{x,y}+l_{x,v})u_{x,w}\\
(l_{x,y}+u_{x,v})u_{x,w}
\end{matrix}\right\rbrace < \theta_{w,x}< \theta_{v,x} < \left\lbrace \begin{matrix}
(u_{x,y}+l_{x,v})u_{x,w} \\
(u_{x,y}+u_{x,v})l_{x,w}
\end{matrix}\right\rbrace < (u_{x,y}+u_{x,v})u_{x,w}\\ %\nonumber
l_{y,z}l_{y,w}l_{y,v} &< \left\lbrace \begin{matrix}
u_{y,z}l_{y,w}l_{y,v} \\
l_{y,z}u_{y,w}l_{y,v} \\
l_{y,z}l_{y,w}u_{y,v} \\
u_{y,z}u_{y,w}l_{y,v} \\
u_{y,z}l_{y,w}u_{y,v}
\end{matrix}\right\rbrace < \theta_{z,y} < l_{y,z}u_{y,w}u_{y,v} < \theta_{v,y} < \theta_{x,y} < u_{y,z}u_{y,w}u_{y,v} \\ %\nonumber
l_{z,y} &< \theta_{y,z} < u_{z,y} \\ %\nonumber
l_{w,v}l_{w,x} &< \left\lbrace \begin{matrix}
u_{w,v}l_{w,x} \\
l_{w,v}u_{w,x}
\end{matrix}\right\rbrace < \theta_{v,w} < \theta_{x,w} < \theta_{y,w} < u_{w,v}u_{w,x} \\ %\nonumber
l_{v,x}l_{v,y}l_{v,w} &< \left\lbrace \begin{matrix}
l_{v,x}l_{v,y}u_{v,w} \\
l_{v,x}u_{v,y}l_{v,w} \\
u_{v,x}l_{v,y}l_{v,w}
\end{matrix}\right\rbrace < \left\lbrace \begin{matrix}
l_{v,x}u_{v,y}u_{v,w} \\
u_{v,x}l_{v,y}u_{v,w} \\
u_{v,x}u_{v,y}l_{v,w}
\end{matrix}\right\rbrace < \theta_{w,v} < \theta_{z,v} < \theta_{y,v} < u_{v,x}u_{v,y}u_{v,w}.
\end{align}
A schematic  of $\cF^S(\omega)$ is shown in Figure~\ref{fig:schematicXCtoFP}. In the S-system there are two attractors, $\text{XC}_1$ which is the same as in the previous example, and a new attractor, denoted FC. Instead of a path connecting $\text{XC}_1$ to another cycle $\text{XC}_2$, as in the previous example, there is now  a path in $\cF^L(\Omega(\omega))$ from $\text{XC}_1$ to a fixed point FP.  It follows that  in $\cF^L(\Omega(\omega))$ the set $\text{XC}_1$ loses stability, and the only attractor is FP.  Therefore  $\phi$ cannot be injective even when confined to the set of attractors.
\end{proof}

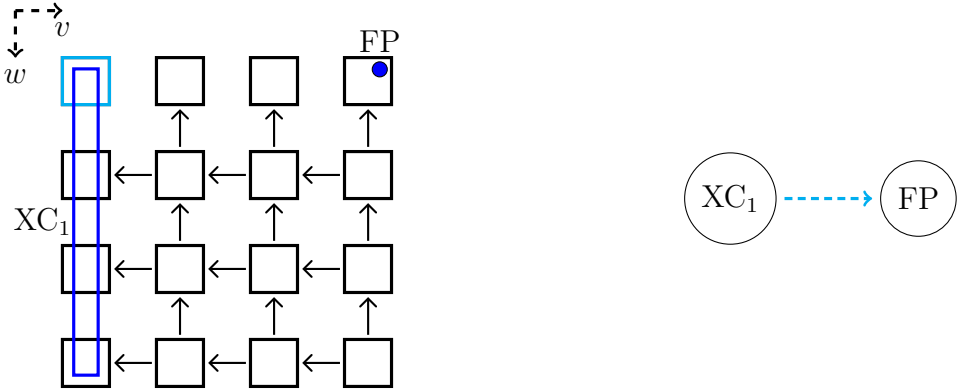
\begin{figure}[h!]
\centering
\begin{minipage}{.5\textwidth}
  \centering
\begin{tikzpicture}[main node/.style={inner sep=0,minimum size=.5cm,rectangle,draw,transform shape},scale=1.25]

\foreach \x in {1,2,3,4}
  \foreach \y in {1,2,3,4}
    \node[main node,very thick] (\x\y) at (\x,\y) {};

\draw[very thick, dashed, ->] (.25,4.75) -- ++(0,-.5) node[anchor=north]{$w$};
\draw[very thick, dashed, ->] (.25,4.75) -- ++(.5,0) node[anchor=north]{$v$};

\draw[fill=blue] (4.125,4.125) circle (.08);
\node[inner sep=2,anchor=south] (FP) at (4.125,4.25) {FP} ;

\path[->,>=angle 90, thick]
(21) edge[shorten >= 1pt,shorten <= 1pt] node[] {} (11)
(31) edge[shorten >= 1pt,shorten <= 1pt] node[] {} (21)
(41) edge[shorten >= 1pt,shorten <= 1pt] node[] {} (31)
(22) edge[shorten >= 1pt,shorten <= 1pt] node[] {} (12)
(32) edge[shorten >= 1pt,shorten <= 1pt] node[] {} (22)
(42) edge[shorten >= 1pt,shorten <= 1pt] node[] {} (32)
(23) edge[shorten >= 1pt,shorten <= 1pt] node[] {} (13)
(33) edge[shorten >= 1pt,shorten <= 1pt] node[] {} (23)
(43) edge[shorten >= 1pt,shorten <= 1pt] node[] {} (33)

(21) edge[shorten >= 1pt,shorten <= 1pt] node[] {} (22)
(31) edge[shorten >= 1pt,shorten <= 1pt] node[] {} (32)
(41) edge[shorten >= 1pt,shorten <= 1pt] node[] {} (42)
(22) edge[shorten >= 1pt,shorten <= 1pt] node[] {} (23)
(32) edge[shorten >= 1pt,shorten <= 1pt] node[] {} (33)
(42) edge[shorten >= 1pt,shorten <= 1pt] node[] {} (43)
(23) edge[shorten >= 1pt,shorten <= 1pt] node[] {} (24)
(33) edge[shorten >= 1pt,shorten <= 1pt] node[] {} (34)
(43) edge[shorten >= 1pt,shorten <= 1pt] node[] {} (44)
;
\node[main node, cyan, very thick] () at (1,4) {};
\draw[very thick, blue] (.87,.87) rectangle (1.13,4.13);
\node[inner sep=0,anchor=east] (XC1) at (.85,2.5) {$\text{XC}_1$};
\end{tikzpicture}
\end{minipage}%
\begin{minipage}{.5\textwidth}
  \centering
\begin{tikzpicture}[main node/.style={inner sep=4pt,circle,fill=white!20,draw}]
    \node[main node] (XC1) at (0,0) {$\text{XC}_1$};
    \node[main node] (FP) at (2.5,0) {$\text{FP}$};     
    \path[->,thick]
    (XC1) edge[shorten >= 3pt,shorten <= 3pt, densely dashed, cyan, very thick] node[] {} (FP);
\end{tikzpicture}
\end{minipage}
\caption{ Left: Similar to Figure \ref{fig:schematicXCtoXC}, a general schematic of $\cF^S(\omega)$ from Lemma \ref{lem:phinotinjectiveonattractor}. Each square corresponds to a 3D subset of nodes, with coordinates differing in $x$, $y$, and $z$ only. Arrows between squares refer to gradient flow in the given direction, i.e. no paths exist in the direction opposite the arrow. The square in cyan is shown in greater detail in Figure \ref{fig:5dsneakingpath}. In $\cF^L(\Omega(\omega))$, there is a new path in the cyan box which connects $\text{XC}_1$ to $\text{FP}$. The full left column and top row are provided in Figures \ref{fig:5dwcolumn} and \ref{fig:5dvrow} respectively. Right: The corresponding (partial) Morse graph. The cyan edge is added only in $\cF^L(\Omega(\omega))$, showing that $\phi$ cannot be made injective even when constrained to attractors. In effect, $\text{XC}_1$ loses stability in $\cF^L(\Omega(\omega))$.
}
\label{fig:schematicXCtoFP}
\end{figure}

\begin{rem}
The complete graphs of $\cF^S(\omega)$ and $\cF^L(\Omega(\omega))$ of the previous two Lemmas are cumbersome and do not contribute greatly to understanding on a first reading. However, the complete set of nodes corresponding to the left column and top row are included in Figures~\ref{fig:5dwcolumn} and~\ref{fig:5dvrow}. We do not include views into other 3D subgraphs, since there is gradient flow between them in $v$ or $w$ directions,  and so there are no other stable Morse sets within them.
\end{rem}

\section{Discussion}

Differential equations have been  a cornerstone of mathematical modeling of physical systems since the time of Newton. The need for predictive modeling of other continuous time processes without first principle models, like in biology, led to further expansion of these models. However, complex systems of interacting elements consisting of many equations  that are often poorly parameterized provides significant challenges to our existing paradigm of analysis of differential equations. 

Switching systems (S-systems in this paper) were proposed as a platform for modeling continuous time processes in gene regulation. The underlying assumption of these models is that regulatory genetic networks execute a Boolean function, but that this execution is embedded in a continuous flow of time  that leads to system of equations with discontinuous right hand sides. There have been many arguments about the appropriateness of such models, the technical challenges  they introduce, and, importantly, whether and how these models represent the dynamics of nearby perturbed continuous models~\cite{Ironi2011,us1}.
One of the key advantages of the S-systems is that they provide a means to combinatorialize the dynamics of the ODE system in terms of state transition graphs (STGs). These provide incomplete~\cite{glass:kaufman:73,glass:kaufman:72,edwards00} information  about the dynamics.  In our description this information is captured in a Morse graph~\cite{Chaves09,us2}. Morse graphs provide information on the number  and type of attractors present. 

In this paper we define a natural extension of state transition graphs for smooth systems that can be viewed as continuous perturbations (not necessarily small) of the  S-systems, where the perturbations are   localized in the  neighborhoods of thresholds. We call these L-systems. We  study the natural question of how the Morse graph of such a perturbation relates to a Morse graph of the S-ystem for a class of regulatory networks with no negative self-regulation. We show that there is a surjection from the set of attractors in the Morse graph of an S-system to the set of attractors of the L-system, and that this surjection is a bijection on the set of fixed point attractors. Therefore no new stable behavior can be introduced by perturbation to a smooth system. Therefore the S-system contains essential information about attractors of the smooth systems although attractors  may be lost or new unstable regions introduced by such a perturbation.
%We further show that there is an order preserving map from the Morse graph of the switching system to the Morse graph of any of its smooth perturbation. 

 As  important as these results are, our constructed examples of systems that show that stronger relationships that those exhibited do not exist. It is easy to construct an example  that shows that an L-system  can have more unstable Morse sets that the corresponding  S-system.%can add an unstable Morse set.  
  It is a much harder task to construct  
  an L-system which combines   two attractors of the S-system into a single attractor of the L-system.
  %perturbation where a perturbed system combines two attractors of the switching system to a single attractor of the perturbed system.  
%
As a consequence of these results, one way one could hope to strengthen the results about the correspondence between the Morse graphs of the switching system and its smooth perturbations, represented by the L-system, is to refine  a definition of the STG for the L-system. Whether a refinement that would provide a closer  correspondence between Morse graphs exists is currently an open question.

\appendix

\section{Proof of Theorem~\ref{thm:Sasync}}\label{app:proofSasync}

\begin{proof}
Let $\kappa$ and $\kappa'$ be two domains with corresponding states $s_1 := g^S(\kappa)$ and $s_2 := g^S(\kappa')$, using the notation of Definition~\ref{def:asynchronousupdate}. Note that the target state of $\kappa$ is given by
\[ t_1 := D^S(s_1) = G^S \circ \Gamma^{-1}\Lambda^S(\kappa), \]
using~\eqref{eq:D^S}.

First suppose that $s_1 = s_2 = t_1$, as in part (a) of Definition~\ref{def:asynchronousupdate} of the asynchronous update rule. This is true if and only if $\Gamma^{-1}\Lambda^S(\kappa) \in \kappa$, which is equivalent to $\kappa$ being an attracting domain, which  is exactly  part (a) of Definition~\ref{def:Sdomaingraph}.

Now we consider parts (b) of the two definitions. Note that two domains $\kappa$ and $\kappa'$ are adjacent if and only if the corresponding states $s_1$ and $s_2$ are adjacent. 
%This follows naturally, since $\kappa$ and $\kappa'$ share a face $\tau$ if and only if there is a unique coordinate $i$ such that $\pi_i(\kappa) \not = \pi_i(\kappa')$ if and only if $s_{1,i} \neq s_{2,i}$ only at $i$.
So presume that $\kappa$ and $\kappa'$ share a face $\tau$, which means $s_1$ and $s_2$ are adjacent. Let $\theta_{j,i} = \pi_i(\tau)$ be the threshold at the face, and assume without loss of generality that 
% \[ \pi_i(\Int \kappa) < \theta_{j,i} < \pi_i( \Int \kappa').\]
$\tau$ is a right face of $\kappa$ and a left face of $\kappa'$, so that $\text{sgn}(\tau,\kappa)= -1$, $\text{sgn}(\tau,\kappa') = 1$. By the definition of $g^S$, this means that $s_{1,i} < s_{2,i}$.

Assume first that  $s_2 \in \cF^S(s_1)$ %$(s_1,s_2) \in \cE^S$ in the state transition graph,
 so that $\scrL^S ((\tau,\kappa)) = -1$, $\scrL^S ((\tau,\kappa')) = 1$. Since $\text{sgn}(\tau,\kappa) = \scrL^S ((\tau,\kappa)) = -1$ by assumption, we have
\begin{align*}
\text{sgn}(\Lambda_i^S(\kappa)/\gamma_i^S - \theta_{j,i} ) = 1 \\
\Rightarrow \Lambda_i^S(\kappa)/\gamma_i^S > \theta_{j,i} > \pi_i(\Int \kappa) .
\end{align*}
Since $s_{1,i} = \pi_i(g^S(\kappa)) = G_i^S(x_{1,i})$ for arbitrary $x_{1,i} \in \pi_i(\Int \kappa)$, we have that 
\[  s_{1,i} = G^S_i(x_{1,i}) < G^S_i(\Lambda_i^S(\kappa)/\gamma_i^S) = t_{1,i}. \]
The statements $s_{1,i} < s_{2,i}$ and  $s_{1,i} < t_{1,i}$ then verify  Definition~\ref{def:asynchronousupdate} (b).1.

In the reverse direction, we have already proved that $s_{1,i} < s_{2,i}$ and $s_{1,i} < t_{1,i}$ imply $\scrL^S ((\tau,\kappa)) = -1$, since all statements were equivalencies. %made were if and only if. 
We show that $\scrL^S ((\tau,\kappa')) = 1$ by way of contradiction. Suppose $\scrL^S ((\tau,\kappa')) = -1$. Then 
\[ \Lambda_i^S(\kappa')/\gamma_i^S < \theta_{j,i}, \]
which implies that $x_i$ is increasing below $\theta_{j,i}$ and decreasing above $\theta_{j,i}$. This means that the  $i$-th component of the target point changes between 
$\kappa_1$ and $\kappa_2$, and these domains only differ in $i$-th coordinate. This implies that the node $i$ of the network \textbf{RN} is regulating itself; the fact that it is negative self-regulation follows from the signs of the vector field in $\kappa$ and $\kappa'$ and the fact that  $\pi_i(\Int \kappa) < \pi_i(\Int \kappa')$.
%which implies that $j = i$, since it was the target point in the $i$-th component that changed, and only one such component may change in adjacent domains. Therefore, $x_i$ is increasing below $\theta_{i,i}$ and decreasing above $\theta_{i,i}$, which is the definition of negative self-regulation. 
Since we assume that  \textbf{RN} has no  negative self-regulation, it must be   that $\scrL^S ((\tau,\kappa')) = 1$, as desired. 

We have shown that if  there are two adjacent domains $\kappa$ on the left and $\kappa'$ on the right and   $s_2 \in \cF^S(s_1)$ 
%$(s_1,s_2) \in \cE^S$
 according to Definition~\ref{def:Sdomaingraph} part (b), this is equivalent to condition (b).1 in Definition~\ref{def:asynchronousupdate}. We leave it to the reader to show in a similar fashion that if $s_{2,i} > s_{1,i}$ and $s_{2,i} > t_{2,i}$  (interchanging indices 1 and 2) then the  Definition~\ref{def:asynchronousupdate} (b).2 is equivalent to $s_1 \in \cF^S(s_2)$.  %$(s_2,s_1) \in \cE^S$ 
 under the same domain adjacency conditions.
\end{proof}

\section{Proof of Theorem~\ref{thm:Lcompatible}}

The main result in this section is  Theorem~\ref{cornerpointproof}, from which the proof of Theorem~\ref{thm:Lcompatible} follows.

 For any $\ell$-cell  $\zeta \in\bbR^N$, $\zeta =  \prod^N_{i=1} [\varphi_i ,\varphi'_i]$ with $0 \leq \ell \leq N$ recall from Definition~\ref{def:cell} that
\[ ND(\zeta):= \{ j \;|\; \varphi \not = \varphi'\} \]
is the set of non-degenerate indices, where $|ND(\zeta)| = \ell$. Let $ND^c(\zeta) := \{1,\dots,N\} \setminus ND(\zeta)$ be the complement of $ND(\zeta)$.

% \begin{defn}\label{reg}
% \Tomas{
% For any $\ell$-cell  $\zeta \in\bbR^N$, $\zeta =  \prod^N_{i=1} [\varphi_i ,\varphi'_i]$ with $0 \leq \ell \leq N$ let 
% \[ R(\zeta):= \{ j \;|\; \varphi \not = \varphi'\} \mbox{ and } D(\zeta) := \{ j \;|\; \varphi = \varphi'\} \]
% be the set of \textit{ regular } and \textit{ degenerate} indices, respectively. Note that $|R| = \ell$.
% }
% \end{defn}

\vspace{12pt}

%\Bree{\textbf{Changed $W$,$W'$ notation to $W_k$,$W'_k$ to highlight dependence.}}

\begin{thm} \label{straightlineproof}
Let $z^L \in Z^L$ be a regular parameter for a L-system, and let $\zeta= \prod^N_{i=1} [\varphi_i ,\varphi'_i] $ be an $\ell$-cell in $\bbR^N$ with $0 \leq \ell < N$.
%\[\zeta := \prod^N_{i=1} [\varphi_i ,\varphi'_i], \]
%where some intervals may be half-infinite. 
For $k \in ND(\zeta)$  let $W_k:=\zeta\cap \{x_k=\varphi_k\}$ and let $W_k':=\zeta\cap \{x_k=\varphi_k'\}$. In the case where $\varphi'_k = +\infty$, choose an arbitrary point $p_k > \varphi_k$ and set  $W_k' :=\zeta\cap \{p_k=\varphi_k'\}$. 
Then for any $j \in ND^c(\zeta)$ 
    \begin{enumerate}
      \item $\dot x_j > 0$ on $W_k \cup W'_k$ implies $\dot x_j > 0$ everywhere in $\zeta$;
      \item $\dot x_j < 0$ on $W_k \cup W'_k$ implies $\dot x_j < 0$ everywhere in $\zeta$.
      \end{enumerate}
\end{thm}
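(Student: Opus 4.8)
The plan is to prove statement (1) and obtain (2) by the symmetric argument (apply (1) to $-\dot x_j$). Fix $j \in ND^c(\zeta)$ and an arbitrary point $x^* \in \zeta$; I will show $\dot x_j(x^*) > 0$. Because $j \in ND^c(\zeta)$, the coordinate $x_j$ is pinned at the constant value $\varphi_j = \varphi'_j$ throughout $\zeta$, so the decay term $-\gamma_j^L x_j$ takes the same value at every point of $\zeta$; all variation of $\dot x_j = -\gamma_j^L \varphi_j + \bar M_j\circ\sigma_j^L(x)$ from \eqref{generalLsystemequation} comes through the inputs $\sigma_{j,i}^L(x_i)$, $i \in \mathbf{S}(j)$.

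Next I reduce to a single varying coordinate. Let $p := (x_1^*,\dots,\varphi_k,\dots,x_N^*) \in W_k$ and $q := (x_1^*,\dots,\varphi'_k,\dots,x_N^*) \in W'_k$ be the points obtained from $x^*$ by moving its $k$-th coordinate to the two endpoints of $[\varphi_k,\varphi'_k]$; they lie on $W_k$ and $W'_k$, so by hypothesis $\dot x_j(p) > 0$ and $\dot x_j(q) > 0$. If $k \notin \mathbf{S}(j)$ then $\dot x_j$ does not depend on $x_k$ and $\dot x_j(x^*) = \dot x_j(p) > 0$ immediately, so assume $k \in \mathbf{S}(j)$. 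Along the segment from $p$ to $q$ only $x_k$ changes, hence only the single input $\sigma_{j,k}^L(x_k)$ can change, every other input being evaluated at the frozen value $x_i^*$. Since $\bar M_j$ is multi-affine, it is affine in this one input when the others are held fixed, so on this segment
\[ \dot x_j = A + B\,\sigma_{j,k}^L(x_k), \]
where $A, B \in \R$ depend only on the values $x_i^*$, $i \ne k$, and are therefore identical at $p$, $q$, and $x^*$.

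I then identify the range of the varying input. The interval $[\varphi_k,\varphi'_k]$ is bounded by adjacent thresholds of $x_k$, so by \eqref{eq:sigmaL} it is either a constant interval of $\sigma_{j,k}^L$ or exactly its Lipschitz bridge $[\vartheta_{j,k}^-,\vartheta_{j,k}^+]$. In the constant case $\sigma_{j,k}^L(x_k)$ is independent of $x_k$ on $[\varphi_k,\varphi'_k]$, whence $\dot x_j(x^*) = \dot x_j(p) > 0$; this also disposes of the case $\varphi'_k = \infty$, where $[\varphi_k,\infty)$ lies above every threshold of $x_k$. In the bridge case $\sigma_{j,k}^L(x_k) = f_{j,k}^L(x_k)$, and continuity of $\sigma_{j,k}^L$ forces the two endpoint values to be the extremes $l_{j,k}^L$ and $u_{j,k}^L$: for $k \to j$ we have $\sigma_{j,k}^L(p) = l_{j,k}^L$, $\sigma_{j,k}^L(q) = u_{j,k}^L$, and for $k \dashv j$ these interchange. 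Thus $\{\sigma_{j,k}^L(p),\sigma_{j,k}^L(q)\} = \{l_{j,k}^L,u_{j,k}^L\}$ while $\sigma_{j,k}^L(x^*) \in [l_{j,k}^L,u_{j,k}^L]$. The affine map $t \mapsto A + Bt$ is strictly positive at both endpoints $t = l_{j,k}^L$ and $t = u_{j,k}^L$ of $[l_{j,k}^L,u_{j,k}^L]$, and an affine function positive at both endpoints of an interval is positive on the whole interval; evaluating at $t = \sigma_{j,k}^L(x^*)$ gives $\dot x_j(x^*) > 0$. This proves (1), and (2) follows identically with inequalities reversed.

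I expect the only delicate point to be the range-identification step: confirming that, because $\sigma_{j,k}^L$ is a step function regularized by a Lipschitz bridge and $[\varphi_k,\varphi'_k]$ is a single inter-threshold interval, the two faces $W_k, W'_k$ realize \emph{precisely} the extreme input values $l_{j,k}^L, u_{j,k}^L$. This is exactly what lets the argument bypass the fact that $f_{j,k}^L$ need not be monotone: I never claim monotonicity of $\dot x_j$ in $x_k$, only affineness of $\dot x_j$ in the input $\sigma_{j,k}^L$, whose range endpoints are pinned to the two faces.
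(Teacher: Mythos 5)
Your proof is correct and follows essentially the same route as the paper's: reduce to a line segment in the $k$-th coordinate direction, use multi-affineness of $\bar M_j$ to see that $\dot x_j$ is affine in the single varying input $\sigma^L_{j,k}$, and note that the two faces pin that input to the extremes $l^L_{j,k},u^L_{j,k}$ so positivity at both endpoints propagates to the interior. Your explicit case split on whether $[\varphi_k,\varphi'_k]$ is a constant interval or exactly the bridge of $\sigma^L_{j,k}$ just spells out the step the paper compresses into ``these bounding values are attained at the boundaries.''
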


\begin{proof}
Let $\zeta$ be an $\ell$-cell with $0 \leq \ell < N$ and let $k \in ND(\zeta)$. Define $W_k$ and $W'_k$ as in the theorem. Let $j\in ND^c(\zeta)$; then $\zeta \subseteq \{x_j=\varphi_j\}$ for some threshold $\varphi_j$. Define 
\[ H_j(x) := \Lambda_j(x) - \gamma_j \varphi_j\]
to be the the right-hand side of the equation for $\dot{ x}_j = H_j(x)$ on $\zeta$. Assume that $\text{sgn}(\dot x_j)$ is constant and nonzero on $W_k \cup W_k' \subset \zeta$.

Let $q\in \zeta$ be an arbitrary point. Then there exists a scalar $\alpha\geq 0$ such that $u :=q-\alpha \vec e_k\in W_k$, where $\vec e_k$ is the unit vector along  the $k$-th coordinate. Let $h:[0,1]\to \zeta$ be the line  segment along the  $k$-th coordinate direction that starts in $W_k$, passes through $Q$, and ends in $W'_k$:
\[ h(s)=u + s(\varphi_k' - \varphi_k) \vec e_k, \; 0\leq s \leq 1\]
Note that $h(0) = u \in W_k$ and $h(1)\in W'_k$ and $h(s_1) = Q$ for some $s_1 \in [0,1]$. 

We consider the two cases in which $k$ has either a regulatory effect on $j$ or not. Expressed in terms of edges in the network \textbf{RN}, this means either $(k,j)$ is an edge in \textbf{RN} ($(k,j) \in E$) or $(k,j)$  is not an edge in \textbf{RN} ($(k,j) \notin E$). First consider $(k,j) \notin E$. Then the derivative $\dot x_j  = H_j(x)$ does not depend on  $x_k$. Since the only variable that changes value along  the line segment  $h(s)$ is $x_k$,  the derivative $\dot x_j (h(s)) = H_j(h(s))$ is constant. 
 Since we know that $\dot x_j$ has the same sign everywhere on $W_k$ it must have the  same sign along $h(s)$. Since $q$ and thus $h(s)$ was arbitrary, the same is true for any point $y \in \zeta$.

Now consider the case where $(k,j) \in E$. 
We observe  that on $h(s)$, the  function $\Lambda_j$ is a linear function of $\sigma_{j,k}(h(s))$. This occurs because $\Lambda_j$ is multi-affine in $\sigma_{j,i}$ for all $i \in \mathbf{S}(j)$, the sources of $j$. But all $x_i \neq x_k$ are constant on $h(s)$,  so $\Lambda$ only changes linearly with respect to $\sigma_{j,k}(h(s))$.  We conclude  that $H_j(h(s))$ is a linear function in $\sigma_{j,k}(h(s))$.

Recall that $ l_{j,k} \leq \sigma_{j,k}(h(s)) \leq u_{j,k}$, and these bounding values are attained at the boundaries $h(s)  = \varphi_k$ and $h(s) = \varphi_k'$.  Therefore 
\[ \min\{\sigma_{j,k}(\varphi_k),\sigma_{j,k}(\varphi_k')\} \leq \sigma_{j,k}(h(s)) \leq \max\{\sigma_{j,k}(\varphi_k),\sigma_{j,k}(\varphi_k')\}. \]
From this inequality and the linearity of $H_j$ in $\sigma_{j,k}(h(s))$, we  conclude
\[ \min \{ H_j(h(0)), H_j(h(1))\}  \leq  H_j(h(s)) \leq \max  \{ H_j(h(0)), H_j(h(1))\} \]
for all $s\in [0,1]$.

When $\dot x_j > 0$ on $W_k \cup W'_k$, then $H_j (h(0))>0$ and $H_j (h(1))>0$, which implies $H_j(h(s)) > 0$ for $0 \leq s \leq 1$. Likewise, when $\dot x_j < 0$ on $W_k \cup W'_k$, then $H_j (h(0))<0$ and $H_j (h(1))<0$, which implies $H_j(h(s)) < 0$ for $0 \leq s \leq 1$. Since the selection of $q$ and hence $h(s)$  was arbitrary, we have proven that the sign of $\dot x_j$ on $\zeta$ is determined by its sign on $W_k \cup W'_k$.
\end{proof}

\vspace{12pt}

\begin{thm} \label{cornerpointproof} 
Let $z^L$ be a regular parameter for an L-system, and let $ \zeta := \prod^N_{i=1} [\varphi_i ,\varphi'_i].$ be an $\ell$-cell in $\bbR^N$ with $0 \leq \ell < N$.
Then for all $j \in ND^c(\zeta)$ we have 
    \begin{enumerate}
      \item $\sgn(\cC(\zeta),j) = +1$ implies $\dot x_j > 0$ everywhere in $\zeta$;
      \item $\sgn(\cC(\zeta),j) = -1$ implies $\dot x_j < 0$ everywhere in $\zeta$.
    \end{enumerate}
\end{thm}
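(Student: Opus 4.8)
The plan is to reduce the whole statement to Theorem~\ref{straightlineproof} by an induction on the cell dimension $\ell = |ND(\zeta)|$, peeling off one non-degenerate coordinate at a time.

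First I would reinterpret the hypothesis as a statement about the sign of $\dot x_j$ at corner points. Since $j \in ND^c(\zeta)$ means $x_j \equiv \varphi_j$ on $\zeta$, we have $\dot x_j = \Lambda_j(x) - \gamma_j\varphi_j = \gamma_j\bigl(\Lambda_j(x)/\gamma_j - \varphi_j\bigr)$ there, and because $\gamma_j > 0$ the condition $\sgn(\cC(\zeta),j) = +1$ says exactly that $\dot x_j > 0$ at every corner point $P \in \cC(\zeta)$. Thus part~(1) is equivalent to the clean assertion: \emph{if $\dot x_j > 0$ at every corner point of $\zeta$, then $\dot x_j > 0$ on all of $\zeta$}. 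Part~(2) is the verbatim analogue with all inequalities reversed, so I would prove only~(1) and remark that the same argument gives~(2).

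I would prove this by induction on $\ell$. The base case $\ell = 0$ is immediate, since then $\zeta$ is a single point which is its own unique corner point. For the inductive step, I pick some $k \in ND(\zeta)$ and split $\zeta$ into the two faces $W_k = \zeta \cap \{x_k = \varphi_k\}$ and $W_k' = \zeta \cap \{x_k = \varphi_k'\}$, each an $(\ell-1)$-cell with $j \in ND^c(W_k)$, $j \in ND^c(W_k')$. When $\varphi_k' < \infty$, the corner set factors as $\cC(\zeta) = \cC(W_k) \cup \cC(W_k')$, so the hypothesis supplies $\dot x_j > 0$ at every corner point of both faces; the induction hypothesis then yields $\dot x_j > 0$ on $W_k$ and on $W_k'$, i.e. on $W_k \cup W_k'$, and Theorem~\ref{straightlineproof}(1) propagates this to all of $\zeta$.

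The remaining case $\varphi_k' = +\infty$ is where the care is required, and I expect it to be the main obstacle. Here every corner point of $\zeta$ carries $x_k = \varphi_k$ (corner points never use an infinite coordinate), so $\cC(\zeta) = \cC(W_k)$ and the induction hypothesis gives $\dot x_j > 0$ on $W_k$. To cross the half-infinite direction I would invoke that, by Definition~\ref{def:cell}, $\varphi_k$ is the \emph{largest} threshold of $x_k$; hence for $x_k \ge \varphi_k$ each $\sigma_{j,k}^L$ sits in its terminal constant regime, so $H_j = \dot x_j$ is independent of $x_k$ on $\zeta$ and equals its value on $W_k$ along every line in the $k$-direction. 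In particular $\dot x_j > 0$ on the auxiliary face $W_k' = \zeta \cap \{x_k = p_k\}$ used in Theorem~\ref{straightlineproof}, and that theorem (or directly the constancy in $x_k$) finishes the case. Checking this ``constant beyond the top threshold'' claim against the definition of $\sigma_{j,k}^L$ and of the half-infinite cell is the one place where the structural hypotheses of the L-system must be used explicitly; everything else is bookkeeping of corner sets under the face decomposition.
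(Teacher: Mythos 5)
Your proposal is correct and follows essentially the same route as the paper's proof: induction on $\ell$, decomposition of $\zeta$ into the opposite faces $W_k$ and $W_k'$ with $\cC(W_k)\cup\cC(W_k')=\cC(\zeta)$, and an appeal to Theorem~\ref{straightlineproof} to propagate the sign from the two faces to all of $\zeta$. The only difference is that you handle the half-infinite case $\varphi_k'=\infty$ explicitly, via the observation that every $\sigma^L_{j,k}$ is constant for $x_k$ beyond the largest threshold so that $\dot x_j$ does not vary in the $k$-direction; the paper's proof passes over this case silently, so your version is, if anything, the more complete one.
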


\begin{proof}
Suppose $\ell=0$. Then $\cC(\zeta)=\zeta$ and the proof is immediate. This is the base case for an inductive proof.
Let $\zeta$ be an $\ell$-cell with $1 \leq \ell < N$ and 
assume that for all $(\ell-1)$-cells the theorem holds. Let $j \in ND^c(\zeta)$ be a degenerate index.
% We will show that the theorem holds for all $\ell$-cells. 

First assume that $\sgn(\cC(\zeta),j)=+1$. Pick any $k \in ND(\zeta)$, let $[\varphi_k ,\varphi'_k]$ be the corresponding  non-degenerate interval and let $W_k:=\zeta\cap \{x_k=\varphi_k\}$, $W'_k:=\zeta\cap \{x_k=\varphi'_k\}$. 
Notice that by Definition \ref{def:cell}, $W_k$ and $W'_k$ are $(\ell-1)$-cells. Furthermore $\cC(W_k)\subseteq\cC(\zeta)$, and so by Definition \ref{def:sgn}, $\sgn(\cC(W_k),j)=+1$. By our inductive hypothesis, $\dot x_j > 0$ everywhere in $W_k$. A similar argument shows that $\dot x_j > 0$ everywhere in $W'_k$ as well. Then by Theorem \ref{straightlineproof}, $\dot x_j >0$ everywhere in $\zeta$.

A similar argument is used when $\sgn(\cC(\zeta),j)=-1$.
This finishes the induction and hence the proof.
\end{proof}

%\Bree{
%  We remark that for technical reasons, the above theorems do not hold for $\ell = N$ under our current definition of corner points. In particular, the S- and L-systems are dissipative, and so all flow from infinity points inward toward a compact set. However, fixed points may occur in half-infinite cells, so that $\sgn(\cC(\tau),j) = +1$ for all defined corner points. Nonetheless, there is decreasing as well as increasing flow in this cell, because of dissipativity. We believe that the definition of corner points could be altered to capture these subtleties, but it is an awkward construction. Since none of our results require these results for $N$-cells, we do not make the attempt.
%}

\section{Lemmas for Section~\ref{sec:Morsegraphs}}

In the following, we assume that constructions in the S- and L-systems come from equivalence class parameters $\omega^S$ and $\omega^L := \Omega(\omega^S)$.

\vspace{12pt}

\begin{defn} \label{def:domainnodecorrespondence}
We define the bijection 
\[ \beta = (g^L)^{-1} \circ \Psi \circ g^S : \cK^S \to \cK^L_{N} \]
and the order-preserving functions
\begin{align*}
  \beta^-(\theta_{j,i}) &= \vartheta_{j,i}^- \\
  \beta^+(\theta_{j,i}) &= \vartheta_{j,i}^+. 
\end{align*}
\end{defn}

When $\kappa$, $\kappa' \in \cK^S$ are adjacent with shared face $\tau \subset \{ x_i = \theta_{j,i}\}$, consider the domain $\eta \in \cK^L_{N-1}$
\[ \eta =  \prod\limits_{k = 1}^{i-1} \beta_k(\kappa) \times [\beta^-(\theta_{j,i}),\beta^+(\theta_{j,i})] \times  \prod\limits_{k = i+1}^{N} \beta_k(\kappa). \]
It is easy to see that
\begin{enumerate}
  \item $\eta$ shares a face with $\beta(\kappa)$, $ \tau^- := \eta \cap \beta(\kappa) \subset \{x_i = \vartheta^-_{j,i}\}$
  \item $\eta$ shares a face with $\beta(\kappa')$, $ \tau^+ := \eta \cap \beta(\kappa') \subset \{x_i = \vartheta^+_{j,i}\}$.
\end{enumerate}
In other words, $\eta$ is the unique domain that lies between $\beta(\kappa)$ and $\beta(\kappa')$, and this domain is in the subset $\cK^L_{N-1} \subset \cK^L$.

The next Lemma is the key result from which many results about the correspondence between the Morse graphs follow.

\vspace{12pt}

\begin{lem}\label{lem:equivpath}

  Consider   $\cF^S(\omega^S)$ and of $\cF^L(\Omega(\omega^S))$ and two adjacent  domains $\kappa, \kappa' \in \cK^S$ with shared face $\tau \subset \{ x_i = \theta_{j,i}\}$.  Let $\zeta := \beta(\kappa), \zeta' := \beta(\kappa')$, $\zeta, \zeta' \in \cK^L_N$ be the corresponding domains in $\cK^L_N$ and let 
  $\eta \in \cK^L_{N-1}$ be the unique domain lying between $\zeta$ and $\zeta'$. 
  Let  $v:= g^S(\kappa), v':= g^S(\kappa')$, $v,v' \in \cV^S$, let  $w:= g^L(\zeta), w':=g^L (\zeta'), w,w' \in \cV^{SL}$, and  $u = g^L(\eta)$.
  Then $v \to v' \in \cF^S(\omega)$ if and only if $w \to u \to w' \in \cF^L(\Omega(\omega))$.
   \end{lem}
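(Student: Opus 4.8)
The plan is to prove the equivalence by translating both sides into a single inequality on the $i$-th coordinate of the focal point, and then matching the two inequalities through the defining conditions of the canonical map $\Omega$. Throughout I will assume without loss of generality that $\tau$ is a right face of $\kappa$ and a left face of $\kappa'$, so that $\text{sgn}(\tau,\kappa) = -1$ and $\text{sgn}(\tau,\kappa') = +1$; the opposite orientation is symmetric. With the standing notation, $\zeta = \beta(\kappa)$ lies at lower $x_i$, $\zeta' = \beta(\kappa')$ at higher $x_i$, and $\eta$ is the bridge domain with $\tau^- = \eta \cap \zeta \subset \{x_i = \vartheta^-_{j,i}\}$ and $\tau^+ = \eta \cap \zeta' \subset \{x_i = \vartheta^+_{j,i}\}$.

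First I would unpack the L-side path $w \to u \to w'$ into its two steps. Since a shared face is a right face of one domain and a left face of the other, the two walls built on $\tau^-$ satisfy $\scrL^L((\tau^-,\zeta)) = -\sgn(\cC(\tau^-),i)$ and $\scrL^L((\tau^-,\eta)) = +\sgn(\cC(\tau^-),i)$, and likewise $\scrL^L((\tau^+,\eta)) = -\sgn(\cC(\tau^+),i)$, $\scrL^L((\tau^+,\zeta')) = +\sgn(\cC(\tau^+),i)$. Reading Definition~\ref{def:Ldomaingraph}(b)--(c) off these identities shows that $w \to u$ holds iff $\sgn(\cC(\tau^-),i) \in \{0,+1\}$ (case (b) when $+1$, case (c) when $0$) and $u \to w'$ holds iff $\sgn(\cC(\tau^+),i) \in \{0,+1\}$. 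Hence the path exists precisely when neither corner sign equals $-1$.

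Next I would reduce the two corner signs to focal-point positions. Every corner point of $\tau^-$ (resp. $\tau^+$) is a corner point of $\zeta$ (resp. $\zeta'$), which lies in $\cK^L_N$, and on such a domain $\Lambda_i^L$ is constant by Definition~\ref{def:cell}; moreover, because the bridge intervals are disjoint, the value $x_i = \vartheta^\mp_{j,i}$ at these corners leaves every $\sigma_{i,k}$ $(k \neq i)$ in a constant region, so $\Lambda_i^L$ takes a single value over all corners of each face. Consequently $\sgn(\cC(\tau^-),i) = \text{sgn}(\Lambda_i^L(\zeta)/\gamma_i^L - \vartheta^-_{j,i})$ and $\sgn(\cC(\tau^+),i) = \text{sgn}(\Lambda_i^L(\zeta')/\gamma_i^L - \vartheta^+_{j,i})$, each lying in $\{-1,+1\}$. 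In the generic case (no self-edge $i \to i$) one has $\Lambda_i^L(\zeta) = \Lambda_i^L(\zeta')$, and the path condition collapses to the single inequality $\Lambda_i^L(\zeta)/\gamma_i^L > \vartheta^+_{j,i}$. Finally I would invoke the canonical correspondence: $O(\omega^S) = O(\omega^L)$ guarantees that the bridge $[\vartheta^-_{j,i},\vartheta^+_{j,i}]$ sits between the same pair of integer states as the threshold $\theta_{j,i}$, and $D^S(\omega^S) = D^L_N(\omega^L)$ forces $G^L_i(\Lambda_i^L(\zeta)/\gamma_i^L) = G^S_i(\Lambda_i^S(\kappa)/\gamma_i^S)$, a common \emph{integer} value (so the L-focal point of $\zeta$ never lands inside a bridge). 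Translating the S-edge condition $\Lambda_i^S(\kappa)/\gamma_i^S > \theta_{j,i}$ --- which, by the no-negative-self-regulation argument of Theorem~\ref{thm:Sasync}, is equivalent to the full pair $\scrL^S((\tau,\kappa)) = -1$, $\scrL^S((\tau,\kappa')) = +1$ --- through this equality yields exactly $\Lambda_i^L(\zeta)/\gamma_i^L > \vartheta^+_{j,i}$, matching the path condition.

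I expect the main obstacle to be the honest treatment of a positive self-edge $i \to i$, the only form of self-regulation the hypotheses permit. When $j = i$ the two focal values $\Lambda_i^L(\zeta)$ and $\Lambda_i^L(\zeta')$ genuinely differ, so the path condition becomes the conjunction $\Lambda_i^L(\zeta)/\gamma_i^L > \vartheta^-_{i,i}$ and $\Lambda_i^L(\zeta')/\gamma_i^L > \vartheta^+_{i,i}$. The forward implication uses monotonicity of $\Lambda_i$ in $\sigma_{i,i}$ (positive self-regulation only raises the focal point), while the converse relies crucially on the fact, supplied by $D^S = D^L_N$, that the lower focal value avoids the bridge, so that $\Lambda_i^L(\zeta)/\gamma_i^L > \vartheta^-_{i,i}$ upgrades to $> \vartheta^+_{i,i}$. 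Reconciling these two ingredients, and verifying that negative self-regulation is exactly what would break the equivalence, is the delicate part; the remainder is bookkeeping with the wall-sign identities above.
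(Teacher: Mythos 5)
Your proposal is correct and follows essentially the same route as the paper's proof: a chain of equivalences running from the wall-labeling condition for $v \to v'$, through $D^S(\omega^S) = D^L_N(\Omega(\omega^S))$, to the corner-sign conditions $\sgn(\cC(\tau^\pm),i) = +1$ that produce the two L-system edges. Your extra care in ruling out the bidirectional case (since $\Lambda_i^L$ is constant on the corners of $\tau^\pm$, the signs are never $0$) and in treating a positive self-edge $i \to i$ separately makes explicit two points the paper's equivalence chain handles only implicitly by carrying both conditions $\Lambda_i^S(\kappa)/\gamma_i^S > \theta_{j,i}$ and $\Lambda_i^S(\kappa')/\gamma_i^S > \theta_{j,i}$ throughout.
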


  \begin{proof}
    We consider  the case when $(\tau,\kappa)$ is the right wall of $\kappa$ ($\text{sgn}((\tau,\kappa))=-1$) and $(\tau,\kappa')$ is the left wall of $\kappa'$ ($\text{sgn}((\tau,\kappa))=+1$). A similar argument holds in the other  case. Then 
    $v' \in \cF^S(v) $
     %$v \to v'$ is true 
     if and only if
     $\scrL^S((\tau,\kappa)) = -1$  and $\scrL^S((\tau,\kappa')) = +1 $, which by Definition~\ref{def:Swall_labeling},   implies that 
     \[ \Lambda^S_i(\kappa)/\gamma^S> \theta_{j,i}, \quad \Lambda^S_i(\kappa')/\gamma^S > \theta_{j,i} .\]
     By Definition~\ref{def:canonicalperturbation} of the correspondence $\Omega$ we know that $D^S\circ g^S(\kappa) = D^L_N \circ g^L(\zeta)$, so if $\Lambda^S_i(\kappa)/\gamma^S> \theta_{j,i}$, then 
    \[ \Lambda^L_i(\zeta)/\gamma^L_i > \beta^+(\theta_{j,i}) \quad \Lambda^L_i(\zeta')/\gamma^L_i > \beta^+(\theta_{j,i}).
    \] 

  Then from  
      Definition~\ref{def:sgn} we get $ \sgn(\cC(\tau^- ),i)= \sgn(\cC(\tau^+) ,i) = +1 $. We note that $\tau^-$ is a right face of $\zeta$ and a left face of $\eta$, while $\tau^+$ is a right face of $\eta$ and a left face of $\zeta'$, by the assumptions of the Lemma. 
     With this information  we can compute 
     \begin{align*}
      \scrL^L((\tau^-,\zeta)) = -1 \cdot +1 = -1, \quad &  \scrL^L((\tau^-,\eta)) = +1 \cdot +1 =+1\\
     \scrL^L((\tau^+,\eta)) = -1 \cdot +1 = -1, \quad & \scrL^L((\tau^+,\zeta'))= +1 \cdot +1 =+1 .
     \end{align*}
     Finally, by Definition~\ref{def:sgn} this is equivalent with the existence of a path $w \to u \to w'$ in $\cV^L$. Since all the previous statements are equivalencies this finishes the proof.
\end{proof}

\vspace{12pt}

\begin{cor}\label{cor:2npath}
Consider   $\cF^S(\omega^S)$ and of $\cF^L(\Omega(\omega^S))$. For any two $v,v' \in \cV^S$ let $w:=\Psi(v), w':=\Psi(v')$.
 Then $v' \in (\cF^S)^k(v)$ for some integer $k$ (which means there is a path of length $k$ in the graph $(\cV^S,\cE^S)$), if and only if $w' \in (\cF^L)^{2k}(w)$, where every domain $\kappa_i = (g^L)^{-1}(w_i)$ belongs to $ \cK^L_N \cup \cK^L_{N-1}$ for all nodes  $w_i$ in the path.
  
 \end{cor}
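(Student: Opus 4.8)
The plan is to prove both implications by induction on the path length $k$, using Lemma~\ref{lem:equivpath} as the building block that replaces one S-edge by a length-two L-path. The base case $k=1$ is exactly Lemma~\ref{lem:equivpath}. For the forward implication I would take a path $v = v_0 \to v_1 \to \cdots \to v_k = v'$ in $(\cV^S,\cE^S)$ and lift it edge by edge: each edge $v_m \to v_{m+1}$ corresponds to a shared face $\tau_m$, and by Lemma~\ref{lem:equivpath} it lifts to a two-step path $\Psi(v_m) \to u_m \to \Psi(v_{m+1})$ in $(\cV^L,\cE^L)$, where $u_m = g^L(\eta_m)$ with $\eta_m \in \cK^L_{N-1}$ the unique domain lying between $\beta(\kappa_m)$ and $\beta(\kappa_{m+1})$. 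Concatenating these segments yields a path of length exactly $2k$ from $w=\Psi(v)$ to $w'=\Psi(v')$ in which the even-indexed nodes lie in $\cK^L_N$ (their states are the all-integer states $\Psi(v_m)$) and the odd-indexed nodes lie in $\cK^L_{N-1}$; in particular every node lies in $\cK^L_N \cup \cK^L_{N-1}$, as required.

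For the reverse implication the key structural fact I would establish first is an \emph{alternation lemma}: any edge of $\cF^L$ whose two endpoints both lie in $\cK^L_N \cup \cK^L_{N-1}$ must join a node of $\cK^L_N$ to a node of $\cK^L_{N-1}$. Indeed, every non-self edge of $\cF^L$ comes from a shared face with a single projection index $i$, so it toggles exactly the $i$-th coordinate of the state between an integer value (a constant region of $\sigma^L_{\cdot,i}$) and an adjacent half-integer value (a bridge region), changing the number of half-integer coordinates by exactly one. From a state in $\cK^L_{N-1}$ (one half-integer coordinate) one could either return that coordinate to an integer value, landing in $\cK^L_N$, or make a second coordinate half-integer, landing in $\cK^L \setminus (\cK^L_N \cup \cK^L_{N-1})$; the restriction forbids the latter, and self-edges occur only on $\cK^L_N$. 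Since $w,w' \in \cV^{SL} = g^L(\cK^L_N)$ are all-integer states, a restricted path of length $2k$ is forced into the pattern $\cK^L_N, \cK^L_{N-1}, \cK^L_N, \dots, \cK^L_N$, so the even-indexed nodes $w_0,w_2,\dots,w_{2k}$ lie in $\cK^L_N$ and the odd-indexed ones in $\cK^L_{N-1}$. I would then apply Lemma~\ref{lem:equivpath} to each triple $w_{2m} \to w_{2m+1} \to w_{2m+2}$ to recover an edge $v_{2m} \to v_{2m+2}$ of $\cF^S$, and concatenate these into the desired S-path of length $k$.

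The hard part is justifying that each such triple is a genuine crossing to which Lemma~\ref{lem:equivpath} applies, rather than a ``back-and-forth'' $w_{2m}\to w_{2m+1}\to w_{2m}$ through a bidirectional wall (recall $\scrL^L$ takes the extra value $0$, a phenomenon with no analogue in $\scrL^S$). This is precisely the mechanism behind the spurious cyan paths in the examples, so I would argue that those paths are exactly what the restriction to $\cK^L_N \cup \cK^L_{N-1}$ removes: in the two-dimensional fundamental structure of Figure~\ref{fig:2dfundamentalstructure} the extra path necessarily routes through the central node, which has two half-integer coordinates and is therefore excluded. Concretely, I would show that the only $\cK^L_N$-neighbors of the middle domain $\eta_m \in \cK^L_{N-1}$ are the two domains straddling its unique bridge coordinate, identify these with $\beta(\kappa)$ and $\beta(\kappa')$ for adjacent S-domains, and hence conclude $w_{2m}\neq w_{2m+2}$ with the triple matching the hypothesis of Lemma~\ref{lem:equivpath}. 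Should a residual bidirectional stall survive, I would remove it and pass to a reduced restricted path, whose projection is an honest S-path; since the length correspondence $k \leftrightarrow 2k$ is exactly the one produced by the lifting in the forward direction, the two reachability statements coincide, completing the induction.
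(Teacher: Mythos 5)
Your forward direction is exactly the argument the paper intends (the corollary is stated with no separate proof, as an immediate consequence of Lemma~\ref{lem:equivpath}): lift each S-edge to a two-step L-path and concatenate. The alternation observation you isolate for the converse is also the right structural fact, and your description of the two $\cK^L_N$-neighbors of a domain in $\cK^L_{N-1}$ is correct.

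The step that would fail as written is your fallback for the ``back-and-forth'' worry. If a restricted path of length $2k$ really contained a stall $w_{2m}\to w_{2m+1}\to w_{2m}$, deleting it would leave a restricted path of length $2(k-1)$ and hence an S-path of length $k-1$, not $k$; the exact length correspondence asserted by the corollary would be lost, and nothing guarantees an S-path of length exactly $k$ exists. The correct resolution is that such stalls cannot occur, and this comes from the wall-labeling rather than from pruning: a face $\tau$ separating $\zeta\in\cK^L_N$ from $\eta\in\cK^L_{N-1}$ has all of its nondegenerate coordinates in constant intervals of the $\sigma^L$'s, so every corner point of $\tau$ lies in the closure of $\zeta$, where $\Lambda^L$ is constant. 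Hence $\sgn(\cC(\tau),i)$ is determined by the single nonzero quantity $\Lambda^L_i(\zeta)/\gamma^L_i-\vartheta^{\pm}_{j,i}$ (regularity of the parameter), so $\scrL^L$ never takes the value $0$ on a wall between a domain of $\cK^L_N$ and one of $\cK^L_{N-1}$. Consequently $\zeta\to\eta$ and $\eta\to\zeta$ cannot both hold (the same wall would have to be simultaneously absorbing and entrance for $\eta$), every triple $w_{2m}\to w_{2m+1}\to w_{2m+2}$ in a restricted path is a genuine crossing with $w_{2m}\neq w_{2m+2}$, and Lemma~\ref{lem:equivpath} applies directly. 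You should also account for self-edges in the converse: they occur only at attracting domains of $\cK^L_N$, which have no other outgoing edges, so they appear only as a terminal run and project to self-edges of $\cF^S$ at the corresponding attracting S-domain, preserving the length bookkeeping.
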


% \Bree{\textbf{I removed the notation $\cK^L_\ell$ with $\ell < N$, because this was only defined for $\ell = N-1$.}}

\vspace{12pt}

\begin{lem} \label{lem:elltoellplus1}
 Consider a  regulatory network \textbf{RN}, an L-system with regular parameter $z^L \in Z^L$, the set of domains $\cK^L$ and nearest neighbor multi-valued map $\cF^L$. %\textit{with no negative self-regulation} 
  %and a map $\cF^L:=\cF^L(\eta)$ for some $\eta \in \cZ^L$.  
 Let $\kappa \in \cK^L\setminus \cK^L_N$ be a domain and let  $u = g^L(\kappa)$ have $k$ non-integer components. Then there is a state $v \in \cV^L$ with $k- 1$ non-integer components, such that 
  \[ v \in \cF^L(u).\]

  % and $u = g^L(\kappa)$, there exists an escape path to a cell $\kappa' \in \cK^L_{\ell+1}$ with fewer non-integer states in $v = g^L(\kappa')$. In other words, there exists $v= g^L(\kappa')$ satisfying $\kappa' \in \cK^L_{\ell+1}$ such that $v \in \cF^L(u)$.
  % $\kappa' \in \cK^L_{\ell+1}$ such that 
  % \[  v\in \cF^L(u)\quad \mbox{ where } u:=g^L(\kappa), v = g^L (\kappa'). \] 
  % }
\end{lem}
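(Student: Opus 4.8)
The plan is to produce $v$ by escaping, in a single $\cF^L$-step, out of one of the bridge intervals of $\kappa$ into an adjacent constant region. Since $\kappa \in \cK^L \setminus \cK^L_N$, at least one coordinate direction $i$ has $I_i = [\vartheta^-_{j,i},\vartheta^+_{j,i}]$ a non-constant (bridge) interval; these are exactly the directions recording a non-integer component of $u = g^L(\kappa)$, so there are $k \ge 1$ of them. Fixing one such $i$, the two faces of $\kappa$ perpendicular to direction $i$ are the left face $\tau^-$ at $x_i = \vartheta^-_{j,i}$ and the right face $\tau^+$ at $x_i = \vartheta^+_{j,i}$. Crossing either face moves $x_i$ into a (non-degenerate) constant region, turning the $i$-th component into an integer and leaving the others unchanged, so the neighbouring domain $\kappa''$ gives $g^L(\kappa'')$ with exactly $k-1$ non-integer components. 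It therefore suffices to show that at least one of $\tau^-,\tau^+$ is an absorbing or bidirectional wall of $\kappa$: then rule (b) or (c) of Definition~\ref{def:Ldomaingraph}, together with the remark following it (wall labels of two domains sharing a face are compatible), supplies the edge $u \to g^L(\kappa'')$.

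The main obstacle, and the heart of the proof, is excluding the \emph{squeeze} configuration in which both faces are entrance walls, i.e. $\scrL^L((\tau^-,\kappa)) = \scrL^L((\tau^+,\kappa)) = +1$, which by Definition~\ref{def:walllabelingfunction} means $\sgn(\cC(\tau^-),i) = +1$ and $\sgn(\cC(\tau^+),i) = -1$. I would compare values at paired corner points: each $P \in \cC(\tau^-)$ pairs with $P' \in \cC(\tau^+)$ having the same coordinates except $P_i = \vartheta^-_{j,i}$ and $P'_i = \vartheta^+_{j,i}$. Because every coordinate of a corner point sits at a threshold, $\Lambda_i(P)$ and $\Lambda_i(P')$ are ordinary $l/u$ combinations, and $\Lambda_i(P)$ (resp. $\Lambda_i(P')$) equals $\Lambda_i(\zeta)$ for a constant domain $\zeta \in \cK^L_N$ having $P$ (resp. $P'$) as a corner. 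The two values can differ only through the self-input $\sigma^L_{i,i}$; since \textbf{RN} has no negative self-regulation any self-loop $i \to i$ is positive, and $\bar M_i$ is multi-affine with unit (hence non-negative) coefficients, so it is non-decreasing in each of its non-negative arguments and passing from $x_i = \vartheta^-_{j,i}$ to $x_i = \vartheta^+_{j,i}$ cannot decrease $\Lambda_i$; thus $\Lambda_i(P) \le \Lambda_i(P')$. (When $j \neq i$ the bridge is disjoint from the self-bridge, so $\sigma^L_{i,i}$ is constant across it and $\Lambda_i(P) = \Lambda_i(P')$.)

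Assuming the squeeze, the corner conditions read $\Lambda_i(P)/\gamma_i > \vartheta^-_{j,i}$ and $\Lambda_i(P')/\gamma_i < \vartheta^+_{j,i}$, which combined with $\Lambda_i(P) \le \Lambda_i(P')$ force $\Lambda_i(P)/\gamma_i \in (\vartheta^-_{j,i},\vartheta^+_{j,i})$. But $\Lambda_i(P)/\gamma_i$ is the $i$-th coordinate of the focal point of a constant domain $\zeta \in \cK^L_N$, and because $\omega^L = \Omega(\omega^S)$ we have $D^L_N = D^S$ with values in $\cV^{SL}$; hence the focal point of every $N$-domain lies in another $N$-domain, so its $i$-th coordinate lies in a constant region and in particular outside the open bridge $(\vartheta^-_{j,i},\vartheta^+_{j,i})$. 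This contradiction rules out the squeeze, so at least one of $\tau^-,\tau^+$ is absorbing or bidirectional and the escape edge exists. I expect the delicate points in the write-up to be (i) justifying that a corner value $\Lambda_i(P)/\gamma_i$ genuinely coincides with the focal coordinate of an $N$-domain, and (ii) the bookkeeping separating the self-bridge case $j=i$ from the ordinary case $j \neq i$; note that the monotonicity of $\bar M_i$ supplied by the no-negative-self-regulation hypothesis is exactly what fails for negative self-loops, consistent with the examples where fixed points appear in $\cK^L \setminus \cK^L_N$.
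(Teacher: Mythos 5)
Your argument is, in substance, the paper's own proof: the paper also fixes a bridge direction $i$, pairs each corner point $q$ of $\tau^-$ with the corner point $\alpha(q)$ of $\tau^+$ differing only in the $i$-th coordinate, and shows that $\sgn(q,i)=+1$ forces $\sgn(\alpha(q),i)=+1$ (and likewise for $-1$), which is exactly the exclusion of your ``squeeze'' configuration; your monotonicity discussion of $\bar M_i$ under the no-negative-self-regulation hypothesis just makes explicit a step the paper leaves implicit. The one place you deviate is in ruling out $\Lambda_i(P)/\gamma_i \in (\vartheta^-_{j,i},\vartheta^+_{j,i})$: you appeal to $\omega^L=\Omega(\omega^S)$ and $D^L_N=D^S$, but the lemma is stated for an arbitrary regular parameter $z^L\in Z^L$, not only for parameters in the image of $\Omega$, so that hypothesis is not available to you. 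The paper instead derives this directly from regularity of $z^L$ (asserting that at a regular parameter $\Lambda^L_i(q)/\gamma^L_i\notin[\vartheta^-_{j,i},\vartheta^+_{j,i}]$ for every $j$); replacing your appeal to $\Omega$ with that appeal to regularity closes the gap and makes your proof coincide with the paper's.
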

  
\begin{proof}
  Let  $\kappa \in \cK^L\setminus \cK^L_N$ with $u = g^L(\kappa)$. Then there is an index $i$ such that $\pi_i(\kappa) = [\vartheta_{j,i}^-, \vartheta_{j,i}^+]$. Let  $\tau^-$ and $\tau^+$ be the left and right faces of $\kappa$ with projection index $i$ and so  $\tau^- \subseteq \{x_i =\vartheta_{j,i}^- \}$ and $\tau^+ \subseteq \{x_i = \vartheta_{j,i}^+ \}$.
  Note that there are two domains $\eta^-, \eta^+$ that are immediate neighbors of $\kappa$ along the $i$-th coordinate which satisfy 
  \[ \pi_i(\eta^-) = [\vartheta_{j-1,i}^+, \vartheta_{j,i}^-] \qquad \pi_i(\eta^+) = [\vartheta_{j,i}^+, \vartheta_{j+1,i}^-]. \]
  It follows that the states 
  \[v^- := g^L(\eta^-) \mbox{ and } v^+ := g^L(\eta^+).\]
  have one  more integer value than $u$.

  Let $\cC(\tau^-)$ be a collection of corner points of $\tau^-$ and $\cC(\tau^+)$ be a collection of corner points of $\tau^+$. Note that there is a bijection $\alpha$ between these two sets and the corresponding corner points that only differ in the $i$-th values where $\vartheta_{j,i}^-$ is replaced by $\vartheta_{j,i}^+$. 
  Take $q \in \cC(\tau^-)$ and assume first that $\sgn(q,i) = +1$. This implies that 
  $\Lambda_i^L(q)/\gamma_i^L > \vartheta_{j,i}^-$. But since at any regular parameter $z^L$ the value 
   \[ \Lambda_i^L(q)/\gamma_i^L \not \in  [\vartheta_{j,i}^-, \vartheta_{j,i}^+] \]
  for any $j$, we conclude that also  $\Lambda_i^L(q)/\gamma_i^L > \vartheta_{j,i}^+$.
  This in turn implies that at the  corner point $\alpha(q) \in \cC(\tau^+)$ we have  $\sgn(\alpha(q),i) = +1$.  We have shown that 
  \[ \sgn(q,i) = +1 \quad  \mbox{if and only if} \quad \sgn(\alpha(q),i) = +1 .\]
   A similar argument shows that $\sgn(q,i) = -1$ if and only if $\sgn(\alpha(q),i) = -1$
  as well. 
  Let $u = g^L(\kappa)$. From the definition of the map $\cF^L$ we have 
  \begin{itemize}
  \item  if there exists at least one corner point $q \in  \cC(\tau^-)$ with $\sgn(q,i) = -1$, then \[v^- \in \cF^L(u);\]
  \item  if there exists at least one corner point $q \in  \cC(\tau^+)$ with $\sgn(q,i) = +1$, then \[v^+ \in \cF^L(u).\]
  \end{itemize}
  Since the $\sgn$ of a single corner point can never be zero by the regularity of $z^L$, we conclude that either one
   or both of the cases hold.
\end{proof}

\section{State transition graphs for the 5D examples}

We now present  full information about the paths in the proofs of Lemmas~\ref{lem:phinotinjectiveingeneral} and~\ref{lem:phinotinjectiveonattractor}. The first column (Figure~\ref{fig:5dwcolumn}) and the first rows (Figure~\ref{fig:5dvrow})  of  the schematics in Figures~\ref{fig:schematicXCtoXC} and~\ref{fig:schematicXCtoFP} are shown below.  The rows have been rotated into columns to make them more legible, and are arranged for side-by-side comparison.

\include{largetikzgraphs3}

{\bf Acknowledgements:} T. G. was partially supported by  NSF grants DMS-1226213, DMS-1361240, USDA 2015-51106-23970,  DARPA  grants D12AP200025 and FA8750-17-C-0054,  and NIH grants 1R01AG040020-01 and 1R01GM126555-01. B.C. was partially supported by grants USDA 2015-51106-23970, DARPA  grants D12AP200025 and FA8750-17-C-0054 and NIH 1R01GM126555-01. P. C-K. was supported by USP and INBRE student research grants at Montana State University. Research reported in this publication was supported by the National Institute of General Medical Sciences of the National Institutes of Health under Award Number P20GM103474. The content is solely the responsibility of the authors and does not necessarily represent the official views of the National Institutes of Health.

\bibliography{bigdata}{}

\begin{thebibliography}{10}

\bibitem{albert:collins:glass}
Reka Albert, James~J. Collins, and Leon Glass.
\newblock {Introduction to Focus Issue: Quantitative approaches to genetic
  networks}.
\newblock {\em Chaos}, {23}({2}):025001, {JUN} {2013}.

\bibitem{Alon}
U.~Alon.
\newblock {\em An introduction to systems biology}.
\newblock Chapman \& Hall/CRC, 2007.

\bibitem{Randrup:04}
Yael Artzy-Randrup, Sarel Fleeishman, Nir Ben-Tal, and Lewi Stone.
\newblock Comment on network motifs: Simple building blocks of complex networks
  and superfamilies of evolved and designed networks.
\newblock {\em Science}, 305:1107c, 2004.

\bibitem{Batt2007a}
G~Batt, C.~Belta, and R.~Weiss.
\newblock Model checking genetic regulatory networks with parameter
  uncertainty.
\newblock In {\em Hybrid Systems: Computation and Control, HSCCÕ07, Lecture
  Notes in Computer Science 4416}, pages 61--75. Springer, Berlin, 2007.

\bibitem{Batt2005}
G~Batt, D~Ropers, H~de~Jong, J~Geiselmann, R~Mateescu, M~Page, and D~Schneider.
\newblock Validation of qualitative models of genetic regulatory networks by
  model checking: analysis of the nutritional stress response in escherichia
  coli.
\newblock {\em Bioinformatics}, 21(Suppl 1):i19--28, 2005.

\bibitem{Batt2007b}
G.~Batt, B.~Yordanov, R.~Weiss, and C.~Belta.
\newblock Robustness analysis and tuning of synthetic gene networks.
\newblock {\em Bioinformatics}, 23(18):2415--2422, 2007.

\bibitem{Belta2006}
C.~Belta and L.C.G.J.M. Habets.
\newblock Controlling a class of nonlinear systems on rectangles.
\newblock {\em Trabs. Aut. Control}, 51:1749Ð1759, 2006.

\bibitem{Gouze2002}
O~Bernard and J~Gouze.
\newblock Global qualitative description of a class of nonlinear dynamical
  systems.
\newblock {\em Artificial Intelligence}, 136:29--59, 2002.

\bibitem{Burkhart:08}
DL~Burkhart and J~Sage.
\newblock Cellular mechanisms of tumour suppression by the retinoblastoma gene.
\newblock {\em Nat Rev Cancer}, 8(9):671--82, 2008.

\bibitem{Chaves2006}
M~Chaves, E~D Sontag, and R~Albert.
\newblock {Methods of robustness analysis for Boolean models of gene control
  networks}.
\newblock {\em IEE Proceedings-Systems Biology}, 153(4):154--167, 2006.

\bibitem{Chinnam:11}
M~Chinnam and DW~Goodrich.
\newblock {RB1}, development, and cancer.
\newblock {\em Curr Top Dev Biol}, 94:129--69, 2011.

\bibitem{us2}
B.~{Cummins}, T.~{Gedeon}, S.~{Harker}, K.~{Mischaikow}, and K.~{Mok}.
\newblock {Combinatorial Representation of Parameter Space for Switching
  Systems}.
\newblock {\em SIAM J. Appl. Dyn. Syst.}, 15(4):2176--2212, 2016.

\bibitem{Cummins17}
Bree Cummins, Tomas Gedeon, Shaun Harker, and Konstantin Mischaikow.
\newblock Model rejection and parameter reduction via time series.
\newblock {\em arXiv}, 1706.04234:http://arxiv.org/abs/1706.04234, 2017.

\bibitem{deJong2004}
H~de~Jong, JL~Gouze, C~Hernandez, M~Page, T~Sari, and J~Geiselmann.
\newblock Qualitative simulation of genetic regulatory networks using
  piecewise-linear models.
\newblock {\em Bull Math Biol}, 66(2):301--40, 2004.

\bibitem{edwards00}
R.~Edwards.
\newblock Chaos in neural and gene networks with hard switching.
\newblock {\em Diff. Eq. Dyn. Sys.}, (9):187--220, 2001.

\bibitem{Edwards2015}
R.~Edwards, a.~Machina, G.~McGregor, and P.~van~den Driessche.
\newblock {A Modelling Framework for Gene Regulatory Networks Including
  Transcription and Translation}.
\newblock {\em Bulletin of Mathematical Biology}, pages 953--983, 2015.

\bibitem{Thieffry06}
A~Faure, A~Naldi, C~Chaouiya, , and D~Thieffry.
\newblock Dynamical analysis of a generic boolean model for the control of the
  mammalian cell cycle.
\newblock {\em Bioinformatics}, 22(14):e124Ðe131, 2006.

\bibitem{us1}
Tomas Gedeon, Shaun Harker, Hiroshi Kokubu, Konstantin Mischaikow, and Hiroe
  Oka.
\newblock Global dynamics for steep sigmoidal nonlinearities in two dimensions.
\newblock {\em Physica D}, 339:18--38, 2017.

\bibitem{glass:kaufman:72}
L~Glass and S~a Kauffman.
\newblock {Co-operative components, spatial localization and oscillatory
  cellular dynamics.}
\newblock {\em Journal of {T}heoretical {B}iology}, 34(2):219--37, February
  1972.

\bibitem{glass:kaufman:73}
L~Glass and S~a Kauffman.
\newblock {The logical analysis of continuous, non-linear biochemical control
  networks.}
\newblock {\em Journal of Theoretical Biology}, 39(1):103--29, April 1973.

\bibitem{Chaouiya06}
AG~Gonzalez, A~Naldi, L~Sanchez, D~Thieffry, , and C~Chaouiya.
\newblock Ginsim: a software suite for the qualitative modelling, simulation
  and analysis of regulatory networks.
\newblock {\em BioSystems}, 84(2):91--100, 2006.

\bibitem{huttinga2017global}
Zane Huttinga, Bree Cummins, Tom{\'a}{\v{s}} Gedeon, and Konstantin Mischaikow.
\newblock Global dynamics for switching systems and their extensions by linear
  differential equations.
\newblock {\em Physica D: Nonlinear Phenomena}, 2017.

\bibitem{Ingram:06}
Piers~J Ingram, Michael Stumpf, and Jaroslav Stark.
\newblock Network motifs: structure does not determine function.
\newblock {\em BMC Genomics}, 7(108), 2006.

\bibitem{Ironi2011}
Liliana Ironi, Luigi Panzeri, Erik Plahte, and Valeria Simoncini.
\newblock {Dynamics of actively regulated gene networks}.
\newblock {\em Physica D: Nonlinear Phenomena}, 240(8):779--794, apr 2011.

\bibitem{Shamir2008}
G~Karlebach and R~Shamir.
\newblock Modelling and analysis of gene regulatory networks.
\newblock {\em Nature}, 9(770), 2008.

\bibitem{Ma09}
Wenzhe Ma, Ala Trusina, Hana El-Samad, Wendel~A. Lim, and Chao Tang.
\newblock Defining network topologies that can achieve biochemical adaptation.
\newblock {\em Cell}, 138(4):760--773, 2009.

\bibitem{Manning:12}
AL~Manning and NJ~Dyson.
\newblock {RB}: mitotic implications of a tumour suppressor.
\newblock {\em Nat Rev Cancer}, 12(3):220--6, 2012.

\bibitem{Shah11}
Sarkar~CA Shah~NA.
\newblock Robust network topologies for generating switch-like cellular
  responses.
\newblock {\em PLoS Comput Biol}, 7(6):e1002085, 2011.

\bibitem{Steinway:2015vv}
Steven~N Steinway, Matthew~B Biggs, Thomas~P Loughran, Jason~A Papin, and
  R{\'e}ka Albert.
\newblock {Inference of Network Dynamics and Metabolic Interactions in the Gut
  Microbiome}.
\newblock {\em PLOS Comput Biol}, 11(6):e1004338, June 2015.

\bibitem{Thomas1973}
R~Thomas.
\newblock Boolean formalization of genetic control circuits.
\newblock {\em Journal of Theoretical Biology}, 42:563--585, 1973.

\bibitem{Thomas1991}
R~Thomas.
\newblock Regulatory networks seen as asynchronous automata: A logical
  description.
\newblock {\em Journal of Theoretical Biology}, 153:1--23, 1991.

\bibitem{Thomas95}
R~Thomas, D~Thieffry, and M~Kaufman.
\newblock Dynamical behaviour of biological regulatory networks-i. biological
  role of feedback loops and practical use of the concept of the
  loop-characteristic state.
\newblock {\em Bull Math Biol}, 57(2):247--76, 1995.

\bibitem{Chaves09}
Laurent Tournier and Madalena Chaves.
\newblock Uncovering operational interactions in genetic networks using
  asynchronous boolean dynamics.
\newblock {\em J Theor Biol}, 260(2):196--209, 2009.

\end{thebibliography}
\bibliographystyle{plain}

\end{document}